\newcommand{\mast}[2]{\scaleobj{0.85}{\, \overset{\text{\raisebox{-0.25ex}{\smash{$#1$}}}}{\underset{\text{\raisebox{0.4ex}{\smash{$#2$}}}}{\ast}}} \, }
\newcommand{\dom}{\textup{dom}}
\newcommand{\cod}{\textup{cod}}
\newcommand{\vdashps}{\vdash_{\textup{ps}}}
\newcommand{\FV}{\textup{FV}}
\newcommand{\op}{\textup{\texttt{op}}}
\newcommand{\coh}{\textup{\texttt{coh}}}
\newcommand{\Ob}{\textup{\texttt{Ob}}}
\newcommand{\subto}[1]{\underset{\text{\raisebox{1ex}{\smash{\fontsize{5}{5}$#1$}}}}{\to}}
\newcommand{\up}{\textup{\texttt{u}}p}
\newcommand{\id}{\textup{\texttt{id}}}
\newcommand{\limit}{\textup{\texttt{lim}}}
\newcommand{\whisk}{\textup{\texttt{star}}}
\newcommand{\pstar}{\textup{\texttt{pstar}}}
\newcommand{\cone}{\textup{\texttt{cone}}}
\newcommand{\Hom}{\textup{Hom}}
\newcommand{\mcC}{\mathcal{C}}
\newcommand{\mcD}{\mathcal{D}}
\newcommand{\bN}{\mathbb{N}}
\newcommand{\CaTT}{\textup{\texttt{CaTT}}}
\newcommand{\sOb}{\textup{\footnotesize{(\texttt{Ob})}}}
\newcommand{\sTo}{\textup{\footnotesize{($\to$)}}}
\newcommand{\sVAR}{\textup{\footnotesize{(VAR)}}}
\newcommand{\sOP}{\textup{\footnotesize{(OP)}}}
\newcommand{\sCOH}{\textup{\footnotesize{(COH)}}}
\newcommand{\sES}{\textup{\footnotesize{(ES)}}}
\newcommand{\sSE}{\textup{\footnotesize{(SE)}}}
\tikzset{Rightarrow/.style={double equal sign distance,>={Implies},->},
triple/.style={-,preaction={draw,Rightarrow}},
quadruple/.style={preaction={draw,Rightarrow,shorten >=0pt},shorten >=1pt,-,double,double
distance=0.2pt}}
\newcommand{\multiquad}[1][1]{\hspace*{#1em}\ignorespaces} 
\let\originalleft\left
\let\originalright\right
\renewcommand{\left}{\mathopen{}\mathclose\bgroup\originalleft}
\renewcommand{\right}{\aftergroup\egroup\originalright}
\let\originalgg\gg
\renewcommand{\gg}{\raisebox{1pt}{$\originalgg$}}
\newtheorem{theorem}{Theorem}[section]
\newtheorem{definition}[theorem]{Definition}
\newtheorem{lemma}[theorem]{Lemma}
\newtheorem{example}[theorem]{Example}
\newtheorem{remark}[theorem]{Remark}
\newtheorem{conjecture}[theorem]{Conjecture}
\let\oldexample=\example
\def\example{\oldexample\upshape}
\let\oldremark=\remark
\def\remark{\oldremark\upshape}
\title{A type-theoretic definition of lax $(\infty,\infty)$-limits}
\author{Thomas Jan Mikhail}
\affil{Universitat Autònoma de Barcelona\footnote{Email: \texttt{thomasjan.mikhail@uab.cat}}}
\date{}
\begin{document}

\maketitle

\begin{abstract}
    We introduce and study a purely syntactic notion of lax cones and $(\infty,\infty)$-limits on finite computads in \texttt{CaTT}, a type theory for $(\infty,\infty)$-categories due to Finster and Mimram. Conveniently, finite computads are precisely the contexts in \texttt{CaTT}. We define a cone over a context to be a context, which is obtained by induction over the list of variables of the underlying context. In the case where the underlying context is globular we give an explicit description of the cone and conjecture that an analogous description continues to hold also for general contexts. We use the cone to control the types of the term constructors for the universal cone. The implementation of the universal property follows a similar line of ideas. Starting with a cone as a context, a set of context extension rules produce a context with the shape of a transfor between cones, i.e.~a higher morphism between cones. As in the case of cones, we use this context as a template to control the types of the term constructor required for universal property.
\end{abstract}

\tableofcontents

\section{Introduction}
\label{sec:Limits}

With the advent of higher category theory, the 21st century is casting new light on abstract homotopy theory. By now the theory of $(\infty,1)$-categories has matured into a full fledged theory thanks to the pioneering work of André Joyal and Jacob Lurie among others, providing us with a language native to $\infty$-groupoids. 

Working with $(\infty,1)$-categories one quickly runs into $(\infty,2)$-categories, with the prototypical example of $(\infty,2)$-category being that of all $(\infty,1)$-categories. On the other hand, $(\infty,n)$-categories appear naturally in the study of cobordisms motivating the necessity for $(\infty,n)$-categories for arbitrary $n$. With these considerations in mind it thus becomes natural to consider the most general such structures namely $ (\infty,\infty)$-categories in which the existence of non-invertible morphisms is permitted in every dimension.

A number of different proposals for models for higher categories have been made. Approaches include Rezk's complete Segal $\Theta_n$-spaces~\cite{rezk2010cartesian} and Barwick's $n$-fold complete Segal spaces~\cite{barwick2005n}, as well as Verity's $n$-complicial sets (see Barwick--Schommer-Pries~\cite{barwick2021unicity} for a more extensive list and again Barwick--Schommer-Pries~\cite{barwick2021unicity} and Loubaton~\cite{loubaton2023theory} for the equivalence of the aforementioned examples). While non-algebraic models such as those mentioned have been the most successful so far, it is an important challenge to provide also a purely algebraic notion, as envisioned by Grothendieck. As an algebraic system, one may ask for a type-theoretic approach, which is the path taken in this work.

In the manuscript Pursuing Stacks, Grothendieck spelled out an algebraic definition of $\infty$-groupoids in terms of globular sets (see Maltsiniotis~\cite{maltsiniotis2010grothendieck}). This was then taken up by Maltsiniotis, who generalized this definition to $(\infty,\infty)$-categories~\cite{maltsiniotis2010grothendieck}. Brunerie~\cite{brunerie2016homotopy} developed a type-theoretic version of the definition of $ \infty$-groupoids with the aim of showing that the types in Homotopy Type Theory possess this structure. Building upon this and filling in the remaining slot, Finster and Mimram constructed a type-theoretic definition of $(\infty,\infty)$-categories called \texttt{CaTT}~\cite{finster2017type}, and together with Benjamin proved that its models are precisely the Grothendieck--Maltsiniotis $(\infty,\infty)$-categories~\cite{benjamin2021globular}.

With the definition in place, it is now our task to develop theory for it. One of the most basic categorical construction is that of limits. In higher dimensional categories there is an increased level of complexity, as the higher-dimensional cells comprising cones may be noninvertible. In 2-dimensional categories, for example, we may consider lax or oplax limits, depending on the orientation of the 2-cells involved, or pseudo limits if the 2-cells are invertible. Many fundamental concepts in 2-category theory arise as lax (co)limits. The Grothendieck construction corresponding to a pseudofunctor, for example, can be described as the oplax colimit of the given pseudofunctor. Furthermore, given a monad as a lax functor $ 1 \to \text{Cat} $, the Eilenberg--Moore category and the Kleisli category are computed by the lax limit and lax colimit of the lax functor respectively.

The same ideas are expected to continue to hold as we increase the dimension of categories all the way up to $(\infty,\infty)$-categories. The Grothendieck construction, for example, is again computed by the lax colimit in the setting of $ (\infty,\infty)$-categories, which motivates the study of lax limits in the setting of $ (\infty,\infty)$-categories (see Loubaton~\cite{loubaton2023theory} for a definition of lax (co)limits and the Grothendieck construction for $(\infty,\infty)$-categories, developed in a model independent framework). In this paper we propose a definition for lax $(\infty,\infty)$-limits in the type theory \texttt{CaTT} of Finster and Mimram~\cite{finster2017type}. In particular, we define a new theory $\CaTT{}_{\texttt{lim}}$ extending \CaTT{}, which describes $(\infty,\infty)$-categories with lax limits for finite computads. 

\texttt{CaTT} is a dependent type theory with two type constructors. The first, functioning as the base case, introduces the type $ \Ob $ which one may think of as the type of objects. The second one takes two terms $ s,t : A $ as input, and produces the type $ s \to_A t $, which is understood as the type of morphisms from $ s \to_A t $. Starting with the terms of $ \Ob $ and applying the second type constructor iteratively, one can access the type of all higher dimensional morphisms. In addition to the type constructors, \texttt{CaTT} also contains two term constructors. The first term constructor is responsible for the existence of all $(\infty,\infty)$-categorical operations. These include all binary operations. 
The second term constructor is responsible for all coherences, interpolating between the different ways of composing cells. These include the unit, the associator, the unit laws and so on. 

In \CaTT{}, diagrams can conveniently be encoded by contexts and we will use these words interchangeably. A context is a list of variables $ x_1 : A_1, \dots, x_n : A_n $, such that the type $ A_i $ can be constructed using the variables of $ x_1 : A_1, \dots, x_{i-1} : A_{i-1} $. Now, contexts in \CaTT{} are finite computads, thanks to a result by Benjamin, Markakis and Sarti~\cite{Benjamin_2024}. This finiteness constraint is brought upon us by the finiteness of type theory's contexts. It may be possible to translate and extend these ideas to other frameworks, such as that of Dean, Finster, Markakis, Reutter and Vicary~\cite{dean2024computads}, in which one is liberated from this restriction.

We define the cone over a context $ \Gamma $ to be another context $ K $. As a low dimensional example, consider the cone over $ \Gamma \equiv x : \Ob, y : \Ob, f : x \to y $, given by
\begin{center}
    \begin{tikzcd}
        c \\
        & x \\
        & y
        \arrow["{p_x}", from=1-1, to=2-2]
        \arrow[""{name=0, anchor=center, inner sep=0}, "{p_y}"', bend right=30, from=1-1, to=3-2]
        \arrow["f", from=2-2, to=3-2]
        \arrow["{p_f}"', shorten <=6pt, shorten >=4pt, Rightarrow, from=0, to=2-2]
    \end{tikzcd}
    \qquad\qquad
    \begin{tabular}{c}
        $ K \equiv \Gamma, \ c : \Ob, \ p_x : c \to x, \ p_y : c \to y, \ p_f : p_y \to p_x \mast{1}{0} f $.
    \end{tabular}
\end{center}

The definition of cones is based on the observation that the types of the projections exhibit a certain pattern. Take, for example, the variable $ f : x \to y $. First of all, the source of $ p_f $ is built out of the projections associated to the target of $ f $, namely $ p_y $. Second, the target of $ p_f $ is built out of the projections associated to the source of $ f $, namely $ p_x $, as well as $ f $ itself. Finally, the variable $ f $ appears in a certain linear way. These are the properties which appear as a set of side conditions in the recognition algorithm for cones below.

The recognition algorithm for cones makes use of an auxiliary judgment $ K \texttt{ cone } (\Gamma;c) $, subject to the rules
    \bigskip
    \begin{center}
        \begin{prooftree}
            \infer0{ c : \Ob \ \ \cone \ \ (\emptyset, c)}
        \end{prooftree}
    \end{center}
    \bigskip
    \begin{center}
        \begin{prooftree}
            \hypo{\Gamma, c : \Ob, \Pi \ \ \cone \ \ (\Gamma;c)}
            \hypo{\Gamma, x : X, c : \Ob, \Pi \vdash s \to_A t}
            \infer2{\Gamma, x : X, c : \Ob, \Pi, p_x : s \to_A t \ \ \cone \ \ ((\Gamma,x : X),c)}
        \end{prooftree}
        \qquad \footnotesize
        (+ side conditions) \normalsize
    \end{center}
    \bigskip
    If $ K \, \textup{\texttt{cone}} \, (\Gamma;c) $ is derivable we say that $ K $ is a cone over $ \Gamma $ with apex $ c $.

The rules exploit the inductive definition of contexts. For the empty context the cone is simply an apex. This is the first rule. In the second rule we begin with a cone $K$ over a diagram $ \Gamma $ as well as a context extension $ \Gamma, x : X $. The side conditions ensure that the type $ s \to_A t $ is of the appropriate form so as to be the type of a projection corresponding to the appended variable $ x : X $. Given this, the rule extends the original cone over $ \Gamma $ to a cone over $ \Gamma, x: X $.

The definition of the cone involves a choice of orientation for the higher cells. The orientation chosen has the benefit of exhibiting a certain uniformity. All other choices can be obtained by making suitable adjustments to the rules. Reversing the orientation of all 1-dimensional cells turns the definition into one for colimits.

If $ \Gamma $ is a globular diagram, meaning that all terms involved are variables, we show that there exists a context $ K $ such that $ K $ is the cone over $ \Gamma $. In particular we construct a cone over a globular diagram $ \Gamma $ with an explicit description of the type of the projection of a variable $ (x : A) \in \Gamma $:
\begin{equation} \label{eq:TypesOfProj}
    \begin{aligned}
    &c \to x, \qquad &&\text{if } \dim(x) = 0 \\
    &p_{\tau(x)} \to p_{\sigma(x)} \mast{d}{d-1} \Big(1_{p_{\sigma^2(x)}}^d \mast{d}{d-2} \cdots \Big(1_{p_{\sigma^{d}(x)}}^d \mast{d}{0} x \Big) \Big), &&\text{if } \dim(x) > 0.
    \end{aligned}
\end{equation}
where $ d = \dim(x) $. Here $ \mast{d}{k} $ denotes the binary composition of two $d$-dimensional cells along a $k$-dimensional gluing locus and $1^d_t$ denotes the $d$-dimensional iterative unit over the cell $t$, where $ d > \dim(t)$. We then examine two classes of examples of two dimensional non-globular diagrams, the first one containing sequences of composable 1-dimensional morphisms and the second a sequence of composable 2-dimensional morphisms. Considering both examples in the strict case we show that there exists a cone with a similar description to that in the globular case. The argument relies on the fact that, given a term in the diagram $ t : A $, the projections associated to the free variables of $ t $ may be composed in a way to obtain a certain term $ p_t $, the type of which is described by a formula analogous to equation~\ref{eq:TypesOfProj}. Restricting ourselves to the strict case does not spoil the argument, as the coherences are absorbed by the terms $ p_t $. Motivated by these examples we conjecture the existence of such terms for all $ t : A $ in a diagram. Using these we construct a cone with an explicit description for arbitrary diagrams.

Given a cone $ K $ over a diagram $ \Gamma $, we can build the universal cone with the help of term-constructor rules, which produce a term for the apex and for each projection. As a collection, these terms can be organized into a context morphism $ \Gamma \vdash \texttt{ucone} : K $. Diagrammatically we may depict this as
\begin{center}
    \begin{tikzcd}
        {\limit_\Gamma} \\
        & x \\
        & y
        \arrow["{\up_x}", from=1-1, to=2-2]
        \arrow[""{name=0, anchor=center, inner sep=0}, "{\up_y}"', bend right=30, from=1-1, to=3-2]
        \arrow["f", from=2-2, to=3-2]
        \arrow["{\up_f}"', shorten <=6pt, shorten >=4pt, Rightarrow, from=0, to=2-2]
    \end{tikzcd}
\end{center}

Implementing the universal property amounts to asking the functor of $(\infty,\infty)$-categories given by postcomposition with the universal cone, schematically depicted by
\begin{align} \label{eq:UniPropEquation}
    \texttt{cone}_* : \{\text{terms of } c \to \limit_\Gamma \} \longrightarrow \{\text{cones over } \Gamma \text{ with apex } c \}
\end{align}
to an equivalence. Here, the domain is the $(\infty,\infty)$-category of terms of the type $ c \to \limit_\Gamma $ and the codomain is given by the $(\infty,\infty)$-category of cones over $ \Gamma $ with apex $ c $. We refer to the $n$-dimensional cells of the codomain as $(n+1)$-transfors.\footnote{More generally, an $n$-dimensional cell in a functor category is called an $ n$-transfor, this terminology being coined by Crans~\cite{crans2003localizations}.} We define an equivalence of $ (\infty,\infty)$-categories to be a functor which is (essentially) surjective on all higher hom-$(\infty,\infty)$-categories. To ensure that the functor in equation~\ref{eq:UniPropEquation} is an equivalence we first spell out a set of rules which, in a manner similar to those generating cones, produce out of a given context a new context of the shape of a higher transfor between cones on that context. As in the case of the universal cone, we use this context as a template, to control the types of the terms we need to build with term constructor rules. In its first application, given a arbitrary cone over a given diagram, our constructions will produce a cone morphism (i.e.~a modification) from the given cone to the universal cone.

\textbf{Outline of paper:} We begin in Subsection~\ref{subsec:CaTT} with a short recap of the rules making up the type theory \texttt{CaTT}. In Subsection~\ref{subsec:Operations} we take the time to make explicit how the standard binary operations and identities are extracted from the general rules.

In Section~\ref{sec:UniversalCone} we discuss all the constructions related to the universal cone. After motivating the definition we introduce the rules generating the cone over a given diagram in Subsection~\ref{subsec:ConesAsContext}. In Subsection~\ref{subsec:ConesGlobCont} we discuss the existence of cones over globular as well as arbitrary contexts. Subsection~\ref{subsec:UniCone} introduces the rules for the universal cone.

Section~\ref{sec:UniProp} contains all constructions related to the universal property. As a warm-up we begin in Subsection~\ref{subsec:Gray} with a set of rules which, given an arbitrary diagram $\Gamma$ generate contexts which have the shape of the Gray tensor product of $ \Gamma $ with the interval. In fact the same rules also produce Gray tensor products with the $n$-globe for any $ n \in \bN^> $. We give an existence proof for the case $n=1$. In Subsection~\ref{subsec:HiTrsfCones} we then give a slight modification of the previous rules so as to give higher transfors between cones. The last ingredient we need is the ability to postcompose with a cone, as in equation~\ref{eq:UniPropEquation}. This is the subject of Subsections~\ref{subsec:Whiskering} and~\ref{subsec:WhiskUP}. Finally, putting everything together we give in Subsection~\ref{subsec:UP} a rule which generates all the required terms for the universal cone to satisfy the universal property. Subsection~\ref{subsec:FreeVars} covers some technical loose ends, ensuring that the new rules do not interfere with those of \texttt{CaTT} and that the new rules do not spoil the admissibility of the cut rule.

\textbf{Acknowledgments:} I am deeply indebted to Joachim Kock for his constant support and guidance. My gratitude extends also to Michael Shulman who first suggested looking into \texttt{CaTT} and the conversations with whom have been a major boost to my project. Many more have given me their time and feedback as I was developing the ideas for this project. These include Thibaut Benjamin, Simon Henry, Felix Loubaton, Samuel Mimram and Chaitanya Leena Subramaniam. Benjamin in particular has made a number of helpful suggestions. This work has been funded by the grant FI-DGR 2020 of the Ag\`encia de Gesti\'o d’Ajuts Universitaris i de Recerca of Catalunya, Spain. I also acknowledge support from grant PID2020-116481GB-I00 (AEI/FEDER, UE) of Spain, grant 10.46540/3103-00099B from the Independent Research Fund Denmark, and the Danish National Research Foundation through the Copenhagen Centre for Geometry and Topology (DNRF151) 

\section{Type theory for \texorpdfstring{$(\infty,\infty)$}{(oo,oo)}-categories}

\subsection{The type theory \CaTT{}} \label{subsec:CaTT}

The type theory \CaTT{}, developed by Mimram and Finster~\cite{finster2017type} describes a single $(\infty,\infty)$-category. Its models are precisely the $(\infty,\infty)$-categories in the sense of Grothendieck and Maltsiniotis, as shown by Benjamin, Finster and Mimram~\cite{benjamin2021globular}. The type theory \CaTT{} has two type constructors
\begin{center}
    \begin{prooftree}
        \infer0{\Gamma \vdash \Ob}
    \end{prooftree}
    \qquad\qquad
    \begin{prooftree}
        \hypo{\Gamma \vdash s : A}
        \hypo{\Gamma \vdash t : A}
        \infer2{\Gamma \vdash s \to_A t}
    \end{prooftree}
\end{center}
Here $ \Ob $ is the type of all objects, while $ s \to_A t $ may be thought of as a ``directed hom-type'', containing all morphisms from $ s $ to $ t $. Starting with $ \Ob $, the second rule allows us to iteratively build all higher hom-types. To reduce clutter we will often omit the subscript $ A $ in $ s \to_A t $.

In a (weak) higher category there are two types of cells one needs to produce as part of the axioms, the operations and the coherences. Operations are non-invertible and include precisely all the compositions while the coherences are invertible and include cells such as the units, the associators and the interchange laws. As an example, consider the horizontal composition of two 2-dimensional cells (left) and the associator (right):
\begin{equation} \label{diag:HCompAssoc}
    \begin{tikzcd}
        x & y & z
        \arrow[""{name=0, anchor=center, inner sep=0}, "{f_1}", bend left=40, from=1-1, to=1-2]
        \arrow[""{name=1, anchor=center, inner sep=0}, "{g_1}"', bend right=40, from=1-1, to=1-2]
        \arrow[""{name=2, anchor=center, inner sep=0}, "{s \equiv f_1 \cdot f_2}", bend left=80, from=1-1, to=1-3, dashed]
        \arrow[""{name=3, anchor=center, inner sep=0}, "{t \equiv g_1 \cdot g_2}"', bend right=80, from=1-1, to=1-3, dashed]
        \arrow[""{name=4, anchor=center, inner sep=0}, "{f_2}", bend left=40, from=1-2, to=1-3]
        \arrow[""{name=5, anchor=center, inner sep=0}, "{g_2}"', bend right=40, from=1-2, to=1-3]
        \arrow["\alpha", shorten <=3pt, shorten >=3pt, Rightarrow, from=0, to=1]
        \arrow["\op"'{pos=0.7}, shift left=3, shorten <=8pt, shorten >=8pt, Rightarrow, from=2, to=3, dashed]
        \arrow["\beta", shorten <=3pt, shorten >=3pt, Rightarrow, from=4, to=5]
    \end{tikzcd}
    \qquad\qquad
    \begin{tikzcd}
        x & y & z & w
        \arrow["f", from=1-1, to=1-2]
        \arrow["{f\cdot g}", bend left=50, from=1-1, to=1-3, dashed]
        \arrow[""{name=0, anchor=center, inner sep=0}, "{s \equiv (f\cdot g)\cdot h}", bend left=70, from=1-1, to=1-4, dashed]
        \arrow[""{name=1, anchor=center, inner sep=0}, "{t \equiv f\cdot (g\cdot h)}"', bend right=70, from=1-1, to=1-4, dashed]
        \arrow["g", from=1-2, to=1-3]
        \arrow["{g\cdot h}", bend right=50, from=1-2, to=1-4, dashed]
        \arrow["h", from=1-3, to=1-4]
        \arrow["\coh"'{pos=0.6}, shift left=2.45, shorten <=8pt, shorten >=8pt, Rightarrow, from=0, to=1, dashed]
    \end{tikzcd}
\end{equation}
The collection of solid arrows form the pasting diagram. These are the diagrams which may be composed via the operations. The remaining arrows, stylized with a dashed body, are built using the data of the underlying pasting diagram. In each case we are ultimately constructing a 2-dimensional cell $ s \to t $. In the case of the horizontal composition $ \op : f_1 \cdot f_2 \to g_1 \cdot g_2 $, the source only makes use of variables $ f_1 $ and $ f_2 $ which define a sub-pasting-diagram called the source and denoted by $ \partial^-\Gamma $. Similarly, the target of $ \op $ only makes use of $ g_1 $ and $ g_2 $ which define a sub-pasting-diagram called the target diagram and denoted by $ \partial^+\Gamma $. In the case of the associator $ \coh : (f\cdot g) \cdot h \to f \cdot (g \cdot h) $, both the source and the target make use of the whole diagram defined by $ f,g $ and $ h $. These observations are the defining features of operations and coherences respectively.

In the type theory \CaTT{} the underlying pasting diagram is encoded as a context $ \Gamma $. A judgment of the form $ \Gamma \vdashps $ asserts that $ \Gamma $ as a diagram has the shape of a pasting diagram. \CaTT{} contains two rules (OP) and (COH), producing the operations and coherences respectively. Each of these two rules is accompanied by a side condition referencing the free variables of the terms involved, which expresses precisely the intuition explained in the previous paragraph. The rules read

\bigskip

\begin{center}
    \begin{prooftree}
        \hypo{\Gamma \vdashps}
        \hypo{\Gamma \vdash s \to_A t}
        \infer2{\Gamma \vdash \op_{\Gamma,s \to_A t} : s \to_{A} t}
    \end{prooftree}
    \qquad
	\begin{tabular}{c}
		$ \FV(s : A) = \FV(\partial^- \Gamma) $ \\[1mm]
		$ \FV(t : A) = \FV(\partial^+\Gamma) $
	\end{tabular}
\end{center}
\bigskip
\begin{center}
    \begin{prooftree}
        \hypo{\Gamma \vdashps}
        \hypo{\Gamma \vdash s \to_A t}
        \infer2{\Gamma \vdash \coh_{\Gamma,s \to_A t} : s \to_{A} t}
    \end{prooftree}
    \qquad
	\begin{tabular}{c}
		$ \FV(s : A) = \FV(\Gamma) $ \\[1mm]
		$ \FV(t : A) = \FV(\Gamma) $
	\end{tabular}
\end{center}

\medskip

It is now just a matter of building in a cut into the rules to make the cut rule admissible. Intuitively we may think of this as allowing us to compose arbitrary terms, not just the variables of the given pasting diagram. In the next section we will give the full list of all rules with all the required details.

\begin{definition}[\CaTT{} Rules] \label{def:Rules} \CaTT{} is defined by the rules:
	
	Rules for types:
	
	\begin{center}
		\begin{prooftree}
			\hypo{\Gamma \vdash}
			\infer1[\sOb]{\Gamma \vdash \Ob}
		\end{prooftree}
		\qquad\qquad
		\begin{prooftree}
			\hypo{\Gamma \vdash s : A}
			\hypo{\Gamma \vdash t : A}
			\infer2[\sTo]{{\Gamma \vdash s \to_{A} t}}
		\end{prooftree}
	\end{center}
	
	Rules for terms:
	
	\begin{center}
		\begin{prooftree}
			\hypo{\Gamma \vdash}
			\hypo{(x : A) \in \Gamma}
			\infer2[\sVAR]{\Gamma \vdash x : A}
		\end{prooftree}
	\end{center}
	\vspace{5mm}
	\begin{center}        
		\begin{prooftree}
			\hypo{\Gamma \vdashps}
            \hypo{\Gamma \vdash s \to_A t}
			\hypo{\Delta \vdash \gamma : \Gamma}
			\infer3[\sOP]{\Delta \vdash \op_{\Gamma, s\to_{A} t}[\gamma] : s[\gamma] \to_{A[\gamma]} t[\gamma] }
		\end{prooftree}
		\quad
		\begin{tabular}{ c }
			$ \FV(\partial^- \Gamma) = \FV(s : A) $ \\[2mm]
			$ \FV(\partial^+ \Gamma) = \FV(t : A) $  
		\end{tabular}
	\end{center}
	\vspace{5mm}
	\begin{center}
		\begin{prooftree}
			\hypo{\Gamma \vdashps}
			\hypo{\Gamma \vdash s \to_{A} t}
			\hypo{\Delta \vdash \gamma : \Gamma}
			\infer3[\sCOH]{\Delta \vdash \coh_{\Gamma, s\to_{A} t}[\gamma] : s[\gamma] \to_{A[\gamma]} t[\gamma]}
		\end{prooftree}
		\qquad
		\begin{tabular}{ c }
			$ \FV(\Gamma) = \FV(t : A) $ \\[2mm]
			$ \FV(\Gamma) = \FV(s : A) $  
		\end{tabular}
	\end{center}

    Rules for contexts:
	
	\begin{center}
		\begin{prooftree}
			\infer0[\footnotesize{\textup{(EC)}}]{\emptyset \vdash}
		\end{prooftree}
		\qquad\qquad
		\begin{prooftree}
			\hypo{\Gamma \vdash A}
			\infer1[\footnotesize{\textup{(CE)}}]{\Gamma, x : A \vdash}
		\end{prooftree}
	\end{center}
	
	Rules for substitutions:
	
	\begin{center}
		\begin{prooftree}
			\hypo{\Delta \vdash}
			\infer1[\sES]{\Delta \vdash \langle \,\rangle : \emptyset}
		\end{prooftree}
		\qquad\qquad
		\begin{prooftree}
			\hypo{\Delta \vdash \gamma : \Gamma}
			\hypo{\Gamma, x : A \vdash}
			\hypo{\Delta \vdash t : A[\gamma]}
			\infer3[\sSE]{\Delta \vdash \langle \gamma, t \rangle : (\Gamma, x : A)}
		\end{prooftree}
	\end{center}
	
	Rules for ps-contexts:
	
	\begin{center}
		\begin{prooftree}
			\infer0[\footnotesize{\textup{(PSS)}}]{x : \Ob \vdashps x : \Ob}
		\end{prooftree}
		\qquad\qquad
		\begin{prooftree}
			\hypo{\Gamma \vdashps x : A}
			\infer1[\footnotesize{\textup{(PSE)}}]{\Gamma, y : A, f : x \to_{A} y \vdashps f : x \to_A y}
		\end{prooftree}
	\end{center}
	\vspace{3mm}
	\begin{center}
		\begin{prooftree}
			\hypo{\Gamma \vdashps f : x \to_{A} y}
			\infer1[\footnotesize{\textup{(PSD)}}]{\Gamma \vdashps y : A}
		\end{prooftree}
		\qquad\qquad
		\begin{prooftree}
			\hypo{\Gamma \vdashps x : \Ob}
			\infer1[\footnotesize{\textup{(PS)}}]{\Gamma \vdashps}
		\end{prooftree}
	\end{center}
	\bigskip
\end{definition}

Given a context morphism $ \Delta \vdash \gamma : \Gamma $ and a variable $ x \in \FV(\Gamma) $ we may denote by $ \gamma_x $ the term in $ \gamma $ corresponding to $ x $.

To make sense of the rules we need to define the free variables on all the constructors, substitution on the constructors, the dimension of contexts and finally the source and the target of a ps-context.

\begin{definition}[Free Variables]
    The set of free variables are inductively defined as follows:
    \begin{align*}
        \FV(\Ob) &:= \emptyset & \FV(\emptyset) &:= \emptyset \\
        \FV(s \to_A t) &:= \FV(s)\cup\FV(t)\cup\FV(A) & \FV(\Gamma, x : A) &:= \FV(\Gamma)\cup \{x\} \\
        \\
        \FV(x) &:= \{x\} & \FV(\langle \, \rangle) &:= \emptyset \\
        \FV(\op_{\Gamma, s \to_A t}[\gamma]) &:= \FV(\gamma) & \FV(\langle \gamma, t \rangle) &:= \FV(\gamma)\cup\FV(t) \\
        \FV(\coh_{\Gamma, s \to_A t}[\gamma]) &:= \FV(\gamma).
    \end{align*}
\end{definition}
As a shorthand we also write $ \FV(t : A) = \FV(t)\cup\FV(A) $ for a term $ t $ of type $ A $. Moreover, if a context $ \Gamma $ is of the form $ \Gamma', \Gamma'' $, then we define $ \FV(\Gamma'') = \FV(\Gamma)\backslash\FV(\Gamma') $. The cardinality of $ \FV(\Gamma'') $ will be denoted by $ |\Gamma''| $.

For technical reasons it is convenient to make substitution an admissible rule rather than an explicit one. This requires defining substitution on the constructors and building in just enough substitution into the term constructors. 
\begin{definition}[Substitution] \label{def:Substitution}
	For the operation of substitution, we define
	\begin{align*}
		\Ob[\gamma] &:\equiv \Ob & \langle \, \rangle \circ \gamma &:\equiv \langle \, \rangle \\
		(s \subto{A} t)[\gamma] &:\equiv s[\gamma] \subto{A[\gamma]} t[\gamma] & \langle \theta, \, t \rangle \circ \gamma &:\equiv \langle \theta \circ \gamma, \, t[\gamma] \rangle \\
	    \\
        x_i[\gamma] &:\equiv \gamma_i \\
		\op_{\Gamma, s \to t}[\gamma][\delta ] &:\equiv \op_{\Gamma, s \to t}[\gamma \circ \delta] \\
        \coh_{\Gamma, s \to t}[\gamma][\delta ] &:\equiv \coh_{\Gamma, s \to t}[\gamma \circ \delta].
	\end{align*}
	Moreover, if $ (x_i : A_i) \in \Gamma $ and $ \Delta \vdash \gamma : \Gamma $, then we define $ A_i[\gamma] :\equiv A_i[\gamma^{i-1}] $.
\end{definition}

The rules also rely on the definition of the source and target of a ps-context. This definition makes use of the dimension of contexts which we define first.

\begin{definition}[Dimension]
    The dimension of a type is defined inductively by:
    \begin{align*}
        \dim(\Ob) &:= 0 \\
        \dim(s \to_A t) &:= \dim(A) + 1.
    \end{align*}
    Moreover, given a context $ \Gamma \equiv (x_i : A_i)_{1 \le i \le n} $ we define $ \dim(\Gamma) := \max_i\{\dim(A_i)\} $ and given a term $ \Gamma \vdash t : A $ we define $ \dim(t) := \dim(A) $.
\end{definition}

Given a term $ \Gamma \vdash t : A $ with $ \dim(t) > 0 $, then $ A $ is necessarily the result of an application of ($\to \mathcal{I}$), and we write $ A \equiv \sigma(t) \to_{\partial A} \tau(t) $.

\begin{definition}[Context Source and Target] \label{def:PsContextSourceAndTarget}
	Let $ \Gamma \vdashps $ be a ps-context and let $ k \in \bN $ be a natural number. We define $ k$-source by $ \partial_k^-( x : \Ob) :\equiv x : \Ob $ and
	\begin{align*}
		\partial_k^- (\Gamma, y : A, f : x \to y) :\equiv
		\begin{cases}
			\partial_k^- (\Gamma) & \text{if } k \le \dim(A) \\
			\partial_k^- (\Gamma), \ y : A, \ f : x \to y & \text{else}
		\end{cases}
	\end{align*}
	and the $k$-th target by $ \partial_k^+(x : \Ob) :\equiv x : \Ob $ and 
	\begin{align*}
		\partial_k^+(\Gamma, \ y : A, \ f : x \to y) :\equiv
		\begin{cases}
			\partial_k^+(\Gamma) & \text{if } k < \dim(A) \\
			\textup{drop}\bigl( \partial_k^+(\Gamma) \bigr), \ y : A & \text{if } k = \dim(A) \\
			\partial_k^+(\Gamma), \ y : A, \ f : x \to y & \text{else}
		\end{cases}
	\end{align*}
	Here $ \textup{drop}(\Gamma) $ is given by $ \Gamma $ with the last element removed. If $ \dim(\Gamma) > 0 $, then the source and target of $ \Gamma $ are defined to be $ \partial^-\Gamma :\equiv \partial_{\dim(\Gamma) -1}^- \Gamma $ and $ \partial^+\Gamma :\equiv \partial_{\dim(\Gamma)-1}^+\Gamma $.
\end{definition}

If we forget the rules (OP) and (COH) along with all the required rules (such as those for the ps-contexts and their boundaries), we are left with a type theory called \texttt{GSet}, which has been studied by Benjamin, Finster and Mimram in~\cite{benjamin2021globular}. The models of this type theory are precisely globular sets. A term, type or context in \CaTT{} is said to be globular, if it is derivable in the subtype theory \texttt{GSet}. This amounts to saying that all of the terms appearing in its construction are variables. Similarly a term, type or context is said to be categorical if it is constructed purely using the rules of \CaTT{}.

\begin{lemma} The following two statements hold in \textup{\texttt{CaTT}}
	\begin{enumerate}[label=(\roman*)]
		\item Given a judgment $ \Gamma \vdash A $ we have 
		\begin{equation*}
			x_i \in \FV(A) \qquad \Longrightarrow \qquad \dim(A_i) < \dim(A).
		\end{equation*}
		\item Given a judgment $ \Gamma \vdash t : A $ we have 
		\begin{align*}
			x_i \in \FV(t) \qquad \Longrightarrow \qquad \dim(x_i) \le \dim(t).
		\end{align*}
	\end{enumerate}
	\label{lem:FreeVariablesBoundTypeDim}
\end{lemma}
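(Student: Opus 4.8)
The plan is to prove (i) and (ii) together by a single structural induction on derivations, covering simultaneously all the judgment forms of Definition~\ref{def:Rules}, with the following strengthenings. For a derivation of $\Gamma\vdash$ I would prove the ``global'' form of~(i): for all $(x_i:A_i),(x_j:A_j)\in\Gamma$, if $x_i\in\FV(A_j)$ then $\dim(A_i)<\dim(A_j)$. For types $\Gamma\vdash A$ I prove~(i) itself. For terms $\Gamma\vdash t:A$ I prove the strengthened statement (ii$'$): if $x_i\in\FV(t)\cup\FV(A)$ then $\dim(x_i)\le\dim(t)=\dim(A)$; this is the form that both implies the stated~(ii) and feeds smoothly into the $(\to)$-case of~(i), so that no separate presupposition lemma is needed. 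For substitutions $\Delta\vdash\gamma:\Gamma$ I prove: if $x_i\in\FV(\gamma)$ then $\dim(x_i)\le\dim(\Gamma)$. Two routine facts would be recorded first: substitution preserves the dimension of a type, $\dim(A[\gamma])=\dim(A)$ (induction on $A$), and $\dim(\partial^-\Gamma)=\dim(\partial^+\Gamma)=\dim(\Gamma)-1$ whenever $\Gamma\vdashps$ and $\dim(\Gamma)>0$ --- a purely syntactic property of ps-contexts proved by induction on the derivation of $\Gamma\vdashps$ (it also belongs to the standard theory of ps-contexts, cf.~\cite{benjamin2021globular}).

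The bulk of the cases is mechanical. The type rule for $\Ob$ is vacuous since $\FV(\Ob)=\emptyset$. For $(\to)$, from $\Gamma\vdash s:A$ and $\Gamma\vdash t:A$ one gets $\Gamma\vdash s\to_A t$ with $\FV(s\to_A t)=\FV(s)\cup\FV(t)\cup\FV(A)$, and in each of the three cases (ii$'$) applied to $\Gamma\vdash s:A$ or $\Gamma\vdash t:A$ gives $\dim(A_i)\le\dim(A)<\dim(A)+1=\dim(s\to_A t)$. For the context rule (CE), producing $\Gamma,x:A\vdash$ from $\Gamma\vdash A$: old variables are handled by the induction hypothesis on the subderivation $\Gamma\vdash$, and the new variable $x:A$ is exactly~(i) for the subderivation $\Gamma\vdash A$. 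For (VAR), producing $\Gamma\vdash x_k:A_k$: if $x_i=x_k$ the claim is trivial, and if $x_i\in\FV(A_k)$ it is the global form of~(i) applied to the direct subderivation $\Gamma\vdash$. The substitution rules (ES) and (SE) are a one-line computation: in (SE), with $\gamma=\langle\gamma',t\rangle:(\Gamma,x:A)$, a variable of $\FV(\gamma')$ is bounded by $\dim(\Gamma)$ by the induction hypothesis, a variable of $\FV(t)$ is bounded by $\dim(t)=\dim(A[\gamma'])=\dim(A)$ by (ii$'$), and both bounds are $\le\dim(\Gamma,x:A)$.

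The genuine content is in (OP) and (COH). Consider the conclusion $\Delta\vdash\op_{\Gamma,s\to_A t}[\gamma]:s[\gamma]\to_{A[\gamma]}t[\gamma]$. Its free variables as a term are $\FV(\gamma)$, and the free variables of its type are contained in $\FV(\gamma)$ as well; so by the substitution case applied to the subderivation $\Delta\vdash\gamma:\Gamma$ every relevant variable has dimension $\le\dim(\Gamma)$. Since the conclusion type has dimension $\dim(A[\gamma])+1=\dim(A)+1$, it suffices to prove $\dim(\Gamma)\le\dim(A)+1$. For (COH) this is immediate: $\dim(\Gamma)$ is realized by some variable $z$ of $\Gamma$, the side condition $\FV(\Gamma)=\FV(s:A)$ puts $z\in\FV(s)\cup\FV(A)$, and (ii$'$) applied to $\Gamma\vdash s:A$ (a subderivation of $\Gamma\vdash s\to_A t$) gives $\dim(z)\le\dim(A)$. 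For (OP) one invokes the preliminary ps-fact: $\partial^-\Gamma$ is a ps-context of dimension $\dim(\Gamma)-1$, hence contains a variable $z$ with $\dim(z)=\dim(\Gamma)-1$; the side condition $\FV(\partial^-\Gamma)=\FV(s:A)$ forces $z\in\FV(s)\cup\FV(A)$, so (ii$'$) on $\Gamma\vdash s:A$ yields $\dim(\Gamma)-1\le\dim(A)$, as required. (When $\dim(\Gamma)=0$ there is no $(\to)$-type over $\Gamma$, so (OP) does not apply.)

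The main obstacle is precisely this last point: isolating and proving the right facts about ps-contexts and their boundaries --- in particular that $\partial^-\Gamma$ (and $\partial^+\Gamma$) drops the dimension of a ps-context by exactly one --- and organizing the mutual induction so that each appeal in the (OP) and (COH) cases is to a genuine subderivation, so that the induction is well-founded. Everything else is bookkeeping forced by the definitions of $\FV$, $\dim$, and substitution.
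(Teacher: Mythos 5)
The paper does not actually prove this lemma: it is stated without proof (it is a standard structural fact about \texttt{CaTT}, essentially contained in the metatheory developed by Benjamin--Finster--Mimram \cite{benjamin2021globular}), so there is no argument of the paper's to compare yours against. On its own merits your proof is correct, and the strengthenings you choose are exactly the right ones to make the mutual induction close: the global form of (i) for contexts feeds the (VAR) and (CE) cases, the form (ii$'$) with $\FV(t)\cup\FV(A)$ feeds the $(\to)$ case, and the bound $\dim(x_i)\le\dim(\Gamma)$ for substitutions is what makes (OP) and (COH) go through once you know $\dim(\Gamma)\le\dim(A)+1$ (resp.\ $\le\dim(A)$), which you correctly extract from the free-variable side conditions together with the fact that $\partial^{\pm}\Gamma$ has dimension exactly $\dim(\Gamma)-1$. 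The two auxiliary facts you isolate ($\dim(A[\gamma])=\dim(A)$ and the boundary-dimension fact for ps-contexts, plus the implicit $\FV(s[\gamma])\subseteq\FV(\gamma)$) are indeed the only nontrivial inputs. One small inaccuracy: your parenthetical claim that no $(\to)$-type is derivable over a $0$-dimensional ps-context is false ($x:\Ob\vdash x\to_{\Ob}x$ is derivable); the correct reason (OP) is excluded there is that $\partial^{\pm}\Gamma$ is only defined when $\dim(\Gamma)>0$, so the side condition cannot be satisfied. This does not affect the argument.
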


\subsection{Operations and identities} \label{subsec:Operations}

In a higher category, there are various ways of composing higher dimensional cells. Here we make explicit the $d$ canonical binary compositions with which $d$-dimensional cells may be composed. The $i$-th binary composition is defined by having an $(i-1)$-dimensional locus of composition.

For 2-dimensional cells we have vertical and horizontal composition. The relevant diagram for vertical composition is given by
\begin{center}
    \begin{tikzcd}[column sep=large]
        x & y
        \arrow[""{name=0, anchor=center, inner sep=0}, "g"{description}, from=1-1, to=1-2]
        \arrow[""{name=1, anchor=center, inner sep=0}, "f", bend left=60, from=1-1, to=1-2]
        \arrow[""{name=2, anchor=center, inner sep=0}, "h"'{pos=0.55}, bend right=50, from=1-1, to=1-2]
        \arrow["\alpha", shorten <=4pt, shorten >=4pt, Rightarrow, from=1, to=0]
        \arrow["\beta", near start, shorten <=5pt, shorten >=3pt, Rightarrow, from=0, to=2]
    \end{tikzcd}
\end{center}
As a ps-context it can be obtained by the derivation tree
\bigskip
\begin{center}
    \begin{prooftree}
        \infer0[\footnotesize{\textup{(PSS)}}]{x : \Ob \vdashps x : \Ob}
        \infer1[\footnotesize{\textup{(PSE)}}]{x : \Ob, \, y : \Ob, \, f : x \to y \vdashps f : x \to y}
        \infer1[\footnotesize{\textup{(PSE)}}]{x : \Ob, \, y : \Ob, \, f : x \to y, \, g : x \to y, \, \alpha : f \to g \vdashps \alpha : f \to g }
        \infer1[\footnotesize{\textup{(PSD)}}]{x : \Ob, \, y : \Ob, \, f : x \to y, \, g : x \to y, \, \alpha : f \to g \vdashps g : x \to y }
        \infer1[\footnotesize{\textup{(PSE)}}]{x : \Ob, \, y : \Ob, \, f : x \to y, \, g : x \to y, \, \alpha : f \to g, \, h : x \to y, \, \beta : g \to h \vdashps \beta : g \to h}
    \end{prooftree}
\end{center}
\bigskip
and we define the context in the last judgment to be $ O^2_1(\alpha,\beta) $, that is we set
\begin{align*}
    O^2_1(\alpha,\beta) :\equiv x : \Ob, y : \Ob, f : x \to y, g : x \to y, \alpha : f \to g, h : x \to y, \beta : g \to h.
\end{align*}
The source and boundary ps-contexts are given by
\begin{align*}
    \partial^- O^2_1(\alpha,\beta) \equiv x : \Ob, y : \Ob, f : x \to y \equiv:& \, D^1(f) \\[2mm]
    \partial^+ O^2_1(\alpha,\beta) \equiv x : \Ob, y : \Ob, h : x \to y \equiv:& \, D^1(h).
\end{align*}
With these we may apply the (OP) rule and obtain the vertical composition
\begin{center}
    \begin{prooftree}
        \hypo{O^2_1(\alpha,\beta) \vdashps }
        \hypo{D^1(f) \vdash f : x \to z}
        \hypo{D^1(h) \vdash h : x \to z}
        \hypo{\Delta \vdash \gamma : O^2_1(\alpha,\beta)}
        \infer4[\sOP]{\Delta \vdash \gamma_\alpha \mast{2}{1} \gamma_\beta : \gamma_f \to \gamma_h }
    \end{prooftree}
\end{center}
where we have defined $ \gamma_\alpha \mast{2}{1} \gamma_\beta :\equiv \op_{O^2_1(\alpha,\beta)}[\gamma] $. In particular we get the composite $ \alpha \mast{2}{1} \beta : f \to h $ using the identity context morphism.

Similarly we can build horizontal composition by considering the diagram
\begin{center}
    \begin{tikzcd}
        x & y & z
        \arrow[""{name=0, anchor=center, inner sep=0}, "f", bend left=40, from=1-1, to=1-2]
        \arrow[""{name=1, anchor=center, inner sep=0}, "{f'}"', bend right=40, from=1-1, to=1-2]
        \arrow[""{name=2, anchor=center, inner sep=0}, "g", bend left=40, from=1-2, to=1-3]
        \arrow[""{name=3, anchor=center, inner sep=0}, "{g'}"', bend right=40, from=1-2, to=1-3]
        \arrow["\alpha", shorten <=3pt, shorten >=3pt, Rightarrow, from=0, to=1]
        \arrow["\beta", shorten <=3pt, shorten >=3pt, Rightarrow, from=2, to=3]
    \end{tikzcd}
\end{center}
As a ps-context it is given by
\begin{align*}
    O^2_0(\alpha,\beta) :\equiv x : \Ob, \, &y : \Ob, f : x \to y, f' : x \to y, \alpha : f \to f', \\
    &z : \Ob, g : y \to z, g' : y \to z, \beta : g \to g'
\end{align*}
in which we may construct the horizontal composite $ \alpha \mast{2}{0} \beta : f \mast{1}{0} g \to f' \mast{1}{0} g' $, where $ \mast{1}{0} $ denotes the composition of 1-cells.

We now construct all binary composites more systematically. The $ d$-dimensional globe as a context can be defined using the rules for ps-context. First of all, by (PSS) we have $ x : \Ob \vdashps x : \Ob $ and we define $ D^0(x : \Ob) :\equiv x : \Ob $. By construction, the judgment $ D^0(x:\Ob) \vdashps x : \Ob $ is derivable and $ \dim(x) = 0 $. Applying inductively the rule (PSE) we get
\begin{center}
    \begin{prooftree}
        \hypo{D^d(x : A) \vdashps x : A}
        \infer1{D^d(x : A), y : A, f : x \to_A y \vdashps f : x \to_A y}
    \end{prooftree}
\end{center}
and we define $ D^{d+1}(f : x \to y) :\equiv D^d(x : A), y : A, f : x \to_A y $. Given a $d$-dimensional globe $ D^d(x:A)$, by an inductive argument one can show that $ \dim(x) = d $ as well as $ \FV(D^d(x : A)) = \FV(x : A) $. Often we will suppress the type and simply write $ D^d(x) $ instead of $ D^d(x : A) $.

Next we define a collection of ps-contexts, the gluings of which will give us the binary compositions. Let $ n \in \bN $ be some natural number. Applying (PSD) and then (PSE) again to $ D^{n+1}(f : x \to_A y) \vdashps f : x \to_A y $ we get a context
\begin{align*}
    \overbrace{D^{n+1}(f : x \to_A y), \, z : A, \, g : y \to z }^{ O^{n+1}_{n}(f \, : \, x \to_A y, \, g \, : \, y \to_A z) \ :\equiv} \vdashps g : y \to_A z.
\end{align*}
Again for brevity we may also write $ O^{n+1}_n(f,g) $. These contexts will allow us to define sequential composition (or vertical composition in the 2-dimensional case). As a sequence of rules this judgment is represented by $ \textup{(PSS)(PSE)}^{d+1}\textup{(PSD)(PSE)}$.

For the contexts of the remaining compositions we take the ps-context which as a sequence of rules is given by $ \textup{(PSS)}\textup{(PSE)}^d \textup{(PSD)}^{d-n}\textup{(PSE)}^{d-n} $. The judgment obtained is of the form
\begin{align*}
    \overbrace{D^d(\alpha : X), \Delta, \beta : Y}^{O^{d}_{n}(\alpha \, : \, X, \, \beta \, : \, Y) \ \equiv} \vdashps \beta : Y
\end{align*}
where $ d > n $, $ \Delta $ is a string of variables containing the target of $ \beta $ and variables thereof.

Now, a calculation shows that
\begin{align} \label{eq:On1nBoundary}
    \begin{split}
        \partial^- O^{n+1}_n(f : x \to_A y, g : y \to_A z) &\equiv D^{n}(x : A) \\[2mm] 
        \partial^+ O^{n+1}_n(f : x \to_A y, g : y \to_A z) &\equiv D^{n}(z : A)    
    \end{split}
\end{align}
as well as
\begin{align} \label{eq:Od1nBoundary}
    \begin{split}
        \partial^- O^{d+1}_n(\phi : \alpha \to_X \alpha', \psi : \beta \to_Y \beta') &\equiv O^d_n(\alpha : X, \beta: Y) \\[2mm]
        \partial^+ O^{d+1}_n(\phi : \alpha \to_X \alpha', \psi : \beta \to_Y \beta') &\equiv O^d_n(\alpha' : X, \beta': Y).
    \end{split}
\end{align}

We use these ps-context to construct all binary operations $ \mast{d}{n} $. By equation~\ref{eq:On1nBoundary} and with the abbreviation $ O^{n+1}_n(f,g) $ for $ O^{n+1}_n(f : x \to_A y, \, g : y \to_A z) $ we have
\begin{center}
    \begin{prooftree}
        \hypo{ O^{n+1}_n(f,g) \vdashps}
        \hypo{D^n(x : A) \vdash x : A}
        \hypo{D^n(z : A) \vdash z : A}
        \hypo{ \Delta \vdash \gamma :  O^{n+1}_n(f,g) }
        \infer4[\sOP]{ \Delta \vdash \gamma_f \mast{n+1}{n} \gamma_g : \gamma_x \to_{A[\gamma]} \gamma_z }
    \end{prooftree}
\end{center}
where we have defined $ \gamma_f \mast{n+1}{n} \gamma_g :\equiv \op_{ O^{n+1}_{n}(f,g)}[\gamma] $.

The remaining binary compositions are built by induction. To start consider the ps-context $ O^{d+1}_n(\phi : \alpha \to \alpha', \psi : \beta \to \beta') $. By the inductive hypothesis, the terms $ O^d_n(\alpha, \beta) \vdash \alpha \mast{d}{n} \beta : T $ and $ O^d_n(\alpha', \beta') \vdash \alpha' \mast{d}{n} \beta' : T $ are derivable and parallel. This allows us to apply the (OP) rule in the following way:
\begin{center}
	\adjustbox{scale=0.9}{
    \begin{prooftree}
        \hypo{O^{d+1}_n(\phi,\psi) \vdashps}
        \hypo{O^d_n(\alpha, \beta) \vdash \alpha \mast{d}{n} \beta : T}
        \hypo{O^d_n(\alpha', \beta') \vdash \alpha' \mast{d}{n} \beta' : T}
        \hypo{ \Delta \vdash \gamma : O^{d+1}_n(\phi,\psi)}
        \infer4[\sOP]{ \Delta \vdash \phi \mast{d+1}{n} \psi : \alpha \mast{d}{n} \beta \to \alpha' \mast{d}{n} \beta' }
    \end{prooftree}
	}
\end{center}
where we have used equation~\ref{eq:Od1nBoundary} and where we defined $ \phi \mast{d+1}{n} \psi \equiv \op_{O^{d+1}_n(\phi,\psi)}[\gamma] $.

A data of a context morphisms $ \Delta \vdash \gamma : O^d_n(\phi,\psi) $ is equivalently given by two $ d$-dimensional terms $ \Delta \vdash u : s \to t $ and $ \Delta \vdash v : s' \to t' $ such that $ \cod^{d-n}(u) \equiv \dom^{d-n}(v) $. Thus, for any two such terms we may derive their composite $u \mast{d}{n} v $, which will also use the notation $ u \cdot v $ when $ d = n+1 $. Given a substitution $ \Theta \vdash \delta : \Delta $ we have $ (u \mast{d}{n} v)[\delta] \equiv u[\delta] \mast{d}{n} v[\delta] $.

Consider the globe context $ D^d(x : A) $. Given any term $ \Delta \vdash t : T $ let us set $ 1^0_t :\equiv t $. Applying the (COH) rule inductively then produces all higher identities
\begin{center}
    \begin{prooftree}
        \hypo{D^d(x : A) \vdashps}
        \hypo{D^d(x : A) \vdash 1^n_x \to 1^n_x}
        \hypo{\Delta \vdash \gamma : D^d(x : A)}
        \infer3[\sCOH]{\Delta \vdash 1^{n+1}_{\gamma_x} : 1^n_{\gamma_x} \to 1^n_{\gamma_x}}
    \end{prooftree}
\end{center}
where we have defined $ 1^{n+1}_{\gamma_x} :\equiv \coh_{D^d(x : A)}[\gamma] $. In the case of $ n = 1 $ we also write $ 1_{\gamma_x} $ for $ 1^1_{\gamma_x} $. Since a term $ \Delta \vdash t : T $ contains the same information as a context morphism $ \Delta \vdash \hat t : D^d(x : A) $ with $ \hat t_x \equiv t $ (see Benjamin--Finster--Mimram~\cite{benjamin2021globular} Lemma~74), we can construct the identity $ 1^{n+1}_t : 1^n_t \to 1^n_t $ for any $d$-dimensional term $ \Delta \vdash t : T $. Given a substitution $ \Theta \vdash \delta : \Delta $ we have $ 1^n_t[\delta] \equiv 1^n_{t[\delta]}$.

\section{The universal cone} \label{sec:UniversalCone}

In 1-category theory, thanks to the free-forgetful adjunction between directed graphs and 1-categories, limits of functors can be equivalently described as limits over diagrams. When considering $(\infty,\infty)$-categories, the appropriate notion encapsulating the generating data of free structures is that of computads. By a result of Benjamin, Markakis and Sarti~\cite{Benjamin_2024}, the contexts of \texttt{CaTT} are precisely finite computads. We exploit this fact to build a theory of limits over finite computads in \texttt{CaTT} in which the role of the diagram is played by the contexts. Note that the restriction to finite computads is a limitations of type theory, in which contexts are necessarily finite. In principle, the same ideas could be used to define arbitrary limits in a framework without this limitation (e.g. in that of computads as in the work of Dean, Finster, Markakis, Reutter and Vicary~\cite{dean2024computads}).

The constituent cells of the cones and the corresponding limit notions we consider here are oriented. As a result, when building cones there are number of choices we can make with respect to the orientation of the terms involved, namely one for each dimension. As an example, in the 2-dimensional case we have four options. The first choice fixes the orientation of the 1-cells leading to the notion of limits and colimits respectively. The second choice fixes the orientation of the 2-cells, which are referred to as the lax and oplax versions of the corresponding (co)limit notion. In our case, for the purposes of this section we will be working with a choice of orientation with which the construction acquires a convenient uniformity. All other choices can be obtained by changing the roles of the terms $ s $ and $ t $ in the definition appropriately. Alternatively, all other choices can be obtained by taking opposites, which are studied by Benjamin and Markakis~\cite{benjamin2024duality} for $(\infty,\infty)$-categories in the related framework of computads of Dean et al~\cite{dean2024computads}.

To motivate the following proposition, let us begin with an example and build a cone over a 2-globe. The context of a 2-globe is given by    
\begin{align*}
    \Gamma \equiv x : \Ob, y : \Ob, f : x \to y, g : x \to y, \alpha : f \to g \qquad\qquad
\end{align*}
and a cone over this diagram is given by
\begin{center}
    \begin{tikzcd}[ampersand replacement=\&]
        {c} \\
        \& x \\
        \& y
        \arrow["{p_x}", from=1-1, to=2-2]
        \arrow[""{name=0, anchor=center, inner sep=0}, "{p_y}"', bend right=40, from=1-1, to=3-2]
        \arrow["g", bend left=30, from=2-2, to=3-2]
        \arrow["{p_g}", shorten <=4pt, Rightarrow, from=0, to=2-2]
    \end{tikzcd}
    \qquad $ \overset{p_\alpha}{\equiv\mkern-2.5mu\equiv\mkern-2.5mu\Rrightarrow} $ \qquad
    \begin{tikzcd}[ampersand replacement=\&]
        {c} \\
        \& x \\
        \& y
        \arrow["{p_x}", from=1-1, to=2-2]
        \arrow[""{name=0, anchor=center, inner sep=0}, "{p_y}"', bend right=40, from=1-1, to=3-2]
        \arrow[""{name=1, anchor=center, inner sep=0}, "f"', bend right=30, from=2-2, to=3-2]
        \arrow[""{name=2, anchor=center, inner sep=0}, "g", bend left=30, from=2-2, to=3-2]
        \arrow["{p_f}", shorten <=4pt, Rightarrow, from=0, to=2-2]
        \arrow["\alpha", shorten <=2pt, shorten >=2pt, Rightarrow, from=1, to=2]
    \end{tikzcd}
\end{center}
which as a context has the form
\begin{align*}
    C(\Gamma) \equiv \ &x : \Ob, \ y : \Ob, \ f : x \to y, \ g : x \to y, \alpha : f \to g \\
    &c : \Ob, \ p_x : c \to x, \ p_y : c \to y, \\\
    &p_f : p_y \to p_x \mast{1}{0} f, \ p_g : p_y \to p_x \mast{1}{0} g, \\
    &p_\alpha : p_g \to p_f \mast{2}{1} \left( 1_{p_x} \mast{2}{0} \alpha \right).
\end{align*}

Since we are working with a weak category, the construction of cones involves choices, as there are many ways of gluing a given diagram. The choices we made are for the sake of convenience because they fit into a pattern revealing certain features we will eventually rely on in our definition.

We can already see these features in the cone as a context in this simple example. The cone is generated by ``formally adding'' an apex and then freely adding a unique ``projection'' to each variable in $ \Gamma $. Moreover, the types of each projection all have common features, which become more apparent if we increases the dimension and the complexity of the diagrams.

Take, for example, $ p_\alpha : p_g \to p_f \mast{2}{1} \left( 1_{p_x} \mast{2}{0} \alpha \right) $. We see, first of all, that the source of $ p_\alpha $ involves only the projections associated to the target of $ \alpha $, namely $ g $. Note that the projections associated to the dependencies of $ g $, namely $ p_x $ and $ p_y $ are themselves dependencies of $ p_g $ and thus appear in the construction of $ p_g $ and its type. On the other hand, the target of $ p_\alpha $ involves only projections associated to the source of $ \alpha $, namely $ f $. The target of $ p_\alpha $ also involves the variable $ \alpha $ itself. Moreover, $ \alpha $ appears in a certain linear way which we now make this precise.

\begin{definition} \label{def:LinearVar}
    Let $ \Gamma \vdash t : A $ be a term and let $ x \in \FV(t) $ be a variable such that $ \dim(x) = \dim(t) $. We define the relation $ x \propto t $ by inducting over the term $ t $ as follows.
    \begin{enumerate}
        \item[\textup{(VAR)}] $ \Gamma \vdash y : A $ where $ (y : A) \in \Gamma $
        
        $ x \propto y $ if $ x \equiv y $.

        \item[\textup{(OP)}] $ \Delta \vdash \textup{\texttt{op}}_{\Gamma,s \to_At}[\gamma] : s[\gamma] \to_{A[\gamma]} t[\gamma] $
        
        $ x \propto \textup{\texttt{\op}}_{\Gamma,s \to_A t}[\gamma] $ if there exists a unique $ x_i \in \FV(\Gamma) $ such that $ x \propto \gamma_i $.

        \item[\textup{(COH)}] $ \Delta \vdash \textup{\texttt{\coh}}_{\Gamma, s\to_A t}[\gamma] : s[\gamma] \to_{A[\gamma]} t[\gamma] $
        
        This case is the same as that for \textup{(OP)}.
    \end{enumerate}
    If $ x \propto t $ we say $ x $ is linear in $ t $.
\end{definition}

\subsection{Cones as context} \label{subsec:ConesAsContext}

Motivated by the previous section we can now spell out a set of rules which build cones over a diagram. Notably we do not restrict ourselves to any particular way of gluing. All legitimate ways of gluing cells to obtain higher projections are permitted and lead to distinct but valid cones. 

The idea is to induct over the underlying context, adding precisely one higher projection for each such new variable. For the sake of readability and reusability, let us introduce a shorthand for the side conditions that will appear in the rules. Given a term $ K \vdash t : A $, a variable $ (x : X) \in K $ and a sublist of variables $ \Pi $ of $K$, we let $ \delta\text{Cond}(t : A, x : X, c : \Ob, \Pi) $ denote the three conditions:
\begin{itemize}
    \item[--] $ t : A $ is categorical
    \item[--] $ \FV(t : A) = \FV(x : X) \cup \bigcup\limits_{ \substack{p \, \in \, \FV(\Pi) \\[1mm] y \, \in \, \FV(\delta(x) \, : \, \partial X) \\[1mm] y \, \propto \, \tau(p) } } \FV(p : T_p) $
    \item[--] $ x \propto t $
\end{itemize}
where $ \delta $ stands for $ \sigma $ or $ \tau $. We will also denote by $ \delta\overline{\text{Cond}}(t : A, x : X, \Pi) $ the same set of conditions except for the linearity condition, and with $\FV(x:X)$ in $\FV(t:A)$ replaced by $ \{c\} $. In the following rules, the set $ \Pi $ will be such that it contains precisely the projections. These conditions ensure that the term $ t $ is built using the appropriate projections. 

\paragraph{Rules for cones as contexts.}
We introduce a new auxiliary judgment, denoted by $K \, \texttt{cone} \, (\Gamma;c) $. This judgement is subject to the rules
    \bigskip
    \begin{center}
        \begin{prooftree}
            \infer0[\textup{\footnotesize{(EK)}}]{ c : \Ob \ \cone \ (\emptyset; c)}
        \end{prooftree}
    \end{center}
    \medskip
    \begin{center}
        \adjustbox{scale=0.95}{
        \begin{prooftree}
            \hypo{\Gamma, c : \Ob, \Pi \ \cone \ (\Gamma;c)}
            \hypo{\Gamma, x : X, c : \Ob, \Pi \vdash s \to_A t}
            \infer2[\textup{\footnotesize{(KE)}}]{\Gamma, x : X, c : \Ob, \Pi, p_x : s \to_A t \ \cone \ ((\Gamma,x : X);c)}
        \end{prooftree}
        \quad \footnotesize
        \begin{tabular}{l}
            $ \tau\overline{\textup{Cond}}(s : A, x : X, c : \Ob, \Pi) $ \\[1mm]
            $\sigma\textup{Cond}(t : A, x : X, \Pi) $
        \end{tabular} \normalsize
        }
    \end{center}
    \bigskip
    If the judgment $ K \,\textup{\texttt{cone}}\, (\Gamma;c) $ is derivable we say $ K $ is a cone over $\Gamma $ with apex $ c $. The variable $ p_x $ is called the projection corresponding to the variable $ x $.

The first rule, to be thought of as a base case, says: a cone over the empty diagram is just the apex. The second rule, the inductive step, assumes we are given a cone $ K $ over a diagram $ \Gamma $. Then, given a context extension $ \Gamma, x : X $ we first of all ask whether a certain type $ s \to_A t $ can be built in $ K $ extended by $ x : X $. This type together with the variable $ x \in \FV(\Gamma)$ must satisfy certain conditions, ensuring that it is of the form as in the previous section (up to coherence). If this is the case, then we append a new variable $ p_x $ to $ K $, which we think of as the (higher) projection corresponding to the variable $ x \in \FV(\Gamma)$.

The following lemma assures us that the above rules are well formed, allowing us to interpret them as a recognition algorithm for context which are cones over a given context. Recall, here that the length of a context $\Gamma$ is denote by $ |\Gamma|$.

\begin{lemma} \label{lem:ConeAdmiss}
    The rule
    \begin{center}
        \begin{prooftree}
            \hypo{K \ \cone \ (\Gamma;c)}
            \infer1{K \vdash}
        \end{prooftree}
    \end{center}
    \bigskip
    is admissible. Moreover, given a judgment $ K \ \cone \ (\Gamma;c) $, the context $ K $ is of the form $ \Gamma, c : \Ob, \Pi $ for some list of variables $ \Pi $ such that $ |\Gamma|=|\Pi|$.
\end{lemma}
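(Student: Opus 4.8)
The plan is to prove both assertions simultaneously by induction on the derivation of $K \ \cone \ (\Gamma;c)$. Because the auxiliary judgment is generated by exactly the two rules (EK) and (KE), there will be precisely two cases, and the induction is well-founded: the premises of (KE) are either a strictly shorter $\cone$-derivation or an ordinary \CaTT{} judgment (and \CaTT{} itself never mentions $\cone$). The statement to carry through the induction is: if $K \ \cone \ (\Gamma;c)$ is derivable, then $K \vdash$ is derivable and $K$ has the form $\Gamma, c:\Ob, \Pi$ for a list of variables $\Pi$ with $|\Gamma| = |\Pi|$.

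In the base case (EK) one has $K \equiv c:\Ob$ and $\Gamma \equiv \emptyset$, so I would just display the derivation of $c:\Ob \vdash$: apply (EC), then \sOb, then (CE). Taking $\Pi \equiv \emptyset$ gives $K \equiv \emptyset, c:\Ob, \Pi$ with $|\Gamma| = 0 = |\Pi|$. In the inductive step (KE), write the premises as $\Gamma, c:\Ob, \Pi \ \cone \ (\Gamma;c)$ and $\Gamma, x:X, c:\Ob, \Pi \vdash s \to_A t$ and the conclusion as $K \equiv \Gamma, x:X, c:\Ob, \Pi, p_x : s \to_A t \ \cone \ ((\Gamma,x:X);c)$. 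Admissibility of $K \vdash$ is then immediate: a single application of (CE) to the second premise already yields $\Gamma, x:X, c:\Ob, \Pi, p_x : s \to_A t \vdash$, i.e.\ $K \vdash$. For the structural claim, the induction hypothesis applied to the first premise contributes the length equation $|\Gamma| = |\Pi|$ (its context is already exhibited in the required shape). Re-associating the context of $K$ as $(\Gamma, x:X), c:\Ob, (\Pi, p_x : s \to_A t)$ and taking $\Pi' \equiv \Pi, p_x : s \to_A t$ as the new list of projections, I obtain $|\Pi'| = |\Pi| + 1 = |\Gamma| + 1 = |\Gamma, x:X|$, exactly as required with $\Gamma, x:X$ playing the role of the underlying diagram.

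I do not anticipate a real obstacle here: the admissibility part costs a single inference and, notably, does not even require a separate presupposition/validity lemma for \CaTT{}, since the side premise of (KE) is by design a type judgment over precisely the context obtained before the last variable $p_x$ is appended. The only points needing a moment of care are purely bookkeeping — checking that after re-association the apex $c:\Ob$ still occupies the one slot separating the diagram from the list of projections, and that the newly introduced variable $p_x$ is fresh — both of which follow from the well-formedness of the context in the conclusion of (KE) (all context variables are distinct), together with the convention fixing the position of $c$.
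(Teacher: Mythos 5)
Your proof is correct and follows the same route the paper takes — the paper's proof is simply ``By induction,'' and your write-up supplies exactly the intended details: the base case via (EC), \sOb{}, (CE), and the inductive step observing that the second premise of (KE) is a type judgment over the full context minus $p_x$, so one application of (CE) yields $K \vdash$, with the length bookkeeping handled by re-associating $\Pi' \equiv \Pi, p_x$.
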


\begin{proof}
    By induction.
\end{proof}

\subsection{Existence of cones} \label{subsec:ConesGlobCont}

In the situation where the underlying diagram $ \Gamma $ is globular we can give an explicit construction and provide an existence proof for the cone.

\begin{theorem} \label{prop:ConePsContext}
    Let $ \Gamma \vdash $ be a globular context. There exists a context
    \begin{align*}
        C(\Gamma) \equiv \Gamma, \ c : \Ob, \ \Pi, \qquad \text{where}\qquad \Gamma \equiv (x_i : A_i)_{1 \le i \le n} \text{ and } \Pi \equiv (p_{x_i} : T_{x_i})_{1 \le i \le n}
    \end{align*}
    such that $ C(\Gamma) \, \textup{\texttt{cone}} \, (\Gamma, c) $ is derivable. Moreover, the type $ T_x $ of the projection $ p_{x} $ can be taken to be of the form
    \begin{align*}
        &c \to x, \qquad &&\text{if } d = 0 \\
        &p_{\tau(x)} \to p_{\sigma(x)} \mast{d}{d-1} \Big(1_{p_{\sigma^2(x)}}^d \mast{d}{d-2} \cdots \Big(1_{p_{\sigma^{d}(x)}}^d \mast{d}{0} x \Big) \Big), &&\text{if } d > 0.
    \end{align*}
    where $ d = \dim(x) $.
\end{theorem}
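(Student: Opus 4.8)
The plan is to induct on the length $n := |\Gamma|$, assembling $C(\Gamma)$ one projection at a time through repeated use of the rule (KE). When $n = 0$ we have $\Gamma \equiv \emptyset$ and take $C(\emptyset) \equiv (c : \Ob)$, which is a cone by (EK) with an empty list of projections, so the displayed formula holds vacuously. For the inductive step write $\Gamma \equiv \Gamma', x : A$ with $|\Gamma'| = n$, and set $d := \dim(x)$, so that $A \equiv \sigma(x) \to_{\partial A} \tau(x)$ when $d > 0$. Since $\Gamma$ is globular, the cells $\tau(x)$ and $\sigma(x), \sigma^2(x), \dots, \sigma^d(x)$ are all variables, and each of them already occurs in $\Gamma'$ (being a face of a variable preceding $x$); hence by the inductive hypothesis $C(\Gamma') \equiv \Gamma', c : \Ob, \Pi'$ is a cone over $\Gamma'$ in which $p_{\tau(x)}, p_{\sigma(x)}, \dots, p_{\sigma^d(x)}$ carry exactly the types prescribed by equation~\ref{eq:TypesOfProj}. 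Taking $T_x$ to be the type in the statement, I will show that (KE) applies with premises $C(\Gamma') \, \cone \, (\Gamma'; c)$ and $\Gamma', x : A, c : \Ob, \Pi' \vdash T_x$; this produces $C(\Gamma) \equiv \Gamma', x : A, c : \Ob, \Pi', p_x : T_x \equiv \Gamma, c : \Ob, \Pi$ with $\Pi \equiv \Pi', p_x : T_x$, and the formula then holds for $p_x$ by construction and for the other projections by the inductive hypothesis (weakening along $x : A$ does not disturb their types). The ambient context $\Gamma', x : A, c : \Ob, \Pi'$ is well-formed by Lemma~\ref{lem:ConeAdmiss} together with weakening.

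Two things then remain: that $T_x$ is a well-formed type over $\Gamma', x : A, c : \Ob, \Pi'$, and that the side conditions $\tau\overline{\textup{Cond}}$ and $\sigma\textup{Cond}$ of (KE) are met. The case $d = 0$ is immediate: $T_x \equiv c \to x$ is a type since $c, x : \Ob$, it is categorical, $\FV(c : \Ob) = \{c\}$ and $\FV(x : \Ob) = \{x\}$, the indexing sets of the side conditions are empty, and $x \propto x$ by the (VAR) clause of Definition~\ref{def:LinearVar}. For $d > 0$ the source of $T_x$ is the variable $p_{\tau(x)}$, of known type $T_{\tau(x)}$, and its target is the iterated composite
\[
  t_x \ :\equiv\ p_{\sigma(x)} \mast{d}{d-1} \Big( 1^d_{p_{\sigma^2(x)}} \mast{d}{d-2} \cdots \Big( 1^d_{p_{\sigma^d(x)}} \mast{d}{0} x \Big) \Big).
\]
The core of the argument is a secondary induction on $d$, peeling off one binary composite (and one iterated identity) at a time, which establishes simultaneously that $t_x$ is a well-typed $d$-dimensional term and that its type is \emph{syntactically} $T_{\tau(x)}$; the latter makes $p_{\tau(x)} \to_{T_{\tau(x)}} t_x$ a legitimate type, so $\partial T_x \equiv T_{\tau(x)}$. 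Well-typedness amounts, at each nesting level, to checking that the appropriate iterated codomain of the left factor coincides with the iterated domain of the right factor; this uses the behaviour of the iterated identities $1^d_{(-)}$ on sources and targets together with the globular identities $\sigma\sigma(z) \equiv \sigma\tau(z)$ and $\tau\sigma(z) \equiv \tau\tau(z)$, which hold for every variable $z$ of dimension at least $2$ in any context because $\sigma(z)$ and $\tau(z)$ must be parallel, to see that $p_{\sigma^{k+1}(x)}$ is indeed the relevant lower boundary of $p_{\sigma^k(x)}$. That the resulting type of $t_x$ is $T_{\tau(x)}$ is then read off by comparing the computed boundaries with the instance of equation~\ref{eq:TypesOfProj} at dimension $d-1$ applied to $\tau(x)$.

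For the side conditions: \emph{categorical} holds because $p_{\tau(x)}$, $t_x$ and $T_{\tau(x)}$ are built purely from variables and the constructors $\mast{d}{k}$ and $1^d_{(-)}$, that is, from the rules (VAR), (OP) and (COH). The \emph{free-variable} conditions both have the form $\FV(r : R) = B \cup \bigcup \FV(p : T_p)$, the union ranging over $p \in \FV(\Pi')$ and $y \in \FV(\delta(x) : \partial A)$ with $y \propto \tau(p)$, where $(r : R)$ is $(p_{\tau(x)} : T_{\tau(x)})$ for $\tau\overline{\textup{Cond}}$ and $(t_x : \partial T_x)$ for $\sigma\textup{Cond}$; the left-hand sides are computed using that the free variables of a composite $\op_{O}[\gamma]$ consist of the whole substitution $\gamma$ (hence include all iterated boundary cells of the composed terms), while on the right-hand side one observes that for each variable $y$ occurring in $\delta(x) : \partial A$ the unique $p \in \FV(\Pi')$ with $y \propto \tau(p)$ is the projection $p_y$ itself, because the target of a projection's type is, modulo its leading composite, exactly the variable projected. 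Unfolding the explicit projection types supplied by the inductive hypothesis then shows the two sides agree. Finally \emph{linearity} $x \propto t_x$: the variable $x$ occurs exactly once in $t_x$, as the rightmost argument of the innermost $\mast{d}{0}$, where $x \propto x$; and since each enclosing $\mast{d}{k}$ is a binary operation whose pasting context puts $x$ into the free variables of precisely one of its two slots, the (OP) clause of Definition~\ref{def:LinearVar} transports linearity outward to $t_x$. This completes the inductive step.

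The step I expect to be the main obstacle is the secondary induction establishing that $t_x$ is well-typed with type $T_{\tau(x)}$: keeping exact track of the iterated domains and codomains of the partially built composites, and invoking the globular identities at precisely the right places, is the delicate bookkeeping on which both the well-formedness of $T_x$ and all of the side conditions ultimately rest.
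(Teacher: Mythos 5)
Your proposal is correct and follows essentially the same route as the paper: a primary induction on the length of $\Gamma$, an explicit nested-composite target term whose well-typedness and boundaries are established by a secondary induction peeling off one composite and one iterated identity at a time, and a verification of the (KE) side conditions via the free-variable computation (reducing the union to the single projection $p_y$ with $y \propto \tau(p_y)$) and the linearity of $x$ in the innermost slot. The only cosmetic difference is that the paper carries the free-variable formula as an explicitly strengthened induction invariant, whereas you recover it by unfolding the explicit projection types, which amounts to the same bookkeeping.
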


\begin{proof}
    We prove the statement by induction over the length of $ \Gamma $. As part of the inductive process we also show that for a given variables $ (x : A) \in \Gamma $, the free variables of the corresponding projections are given by
    \begin{align} \label{eq:FVproj}
        \FV(p_x : T_x) = \bigcup_{ \substack{p_{x_i} \in \FV(\Pi) \\ y \in \FV(x \, : \, A) \\ y \, \propto \, \tau(p_{x_i}) }} \FV(p_{x_i} : T_{x_i}).
    \end{align}
    In addition to that we have $ c \in \FV(p_x : T_x) $ and $ x $ is the unique variable in $ \Gamma $ such that $ x \propto \tau(p_x) $.
    
    If $ \Gamma \equiv \emptyset $, then the rules directly give us $ c : \Ob $ as a cone over $ \emptyset $ with apex $ c $. Assume now that we have built the cone for the context $ \Gamma $, that is, we have a derivation of $ C(\Gamma) \texttt{ cone } (\Gamma;c) $ as described in the statement. Assume moreover that we now extend the context $\Gamma $ to $ \Gamma, x : A $. If $ \dim(x) = 0$, then $ c \to x $ is derivable and satisfies the condition of the rule (KE). Thus, applying (KE) we can extend the context by $ p_x : c \to x $, giving us a cone over $ \Gamma, x : A $. Moreover, $ p_x : c \to x $ satisfies equation~\ref{eq:FVproj}.
    
    Assume now that $ d := \dim(x) > 0 $ and define the terms
    \begin{align}
        t^{d+1}(x) :\equiv x : A, \qquad\qquad t^{n}(x) :\equiv 1^{n-1}_{p_{\sigma^{n}(x)}} \mast{d}{d-n} t^{n+1}(x) : T_n(x)
    \end{align}
    for $ 1 \le n \le d $ and where $ T $ is the appropriate type. The term $ t^n(x) $, which is of the form
    \begin{align*}
        t^{n}(x) :\equiv 1^{n-1}_{p_{\sigma^{n}(x)}} \mast{d}{d-n} \Big( 1^{n}_{p_{\sigma^{n+1}(x)}} \mast{d}{d-n-1} \cdots \Big( 1^{d-1}_{p_{\sigma^d(x)}} \mast{d}{0} x \Big) \Big).
    \end{align*}
    approximates $ \tau(p_x) $ as $ n $ decreases. We claim that for all $ 1 \le n \le d+1 $ we have
    \begin{enumerate}[label=(\roman*)]
        \item \label{enu:item1} $ t^{n}(x) : T_n(x) $ derivable
        \item \label{enu:item2} $ \sigma^m(t^n(x)) \equiv t^{n-m}(\sigma^m(x)) $ for all $ 0 \le m < n $
        \item \label{enu:item3} $ \tau(t^n(x)) \equiv t^{n-1}(\tau(x)) $ if $ 2 \le n $
        \item \label{enu:item4} $ \FV(t^n(x) : T_n(x)) = \FV(x : A) \cup \FV(p_{\sigma^n(x)} : T_{\sigma^n(x)}) $ if $ n < d + 1$
        \item \label{enu:item5} $ x \propto t^n(x) $
    \end{enumerate}

    For the statements all the statements we now perform a nested induction on $ n $ starting with $ n = d+1 $ all the way down to $ n = 1 $. We start with~\ref{enu:item1} and~\ref{enu:item2}, which we prove simultaneously. For $ n = d+1 $ we have $ t^{d+1}(x) \equiv x : A $ which is derivable so that~\ref{enu:item1} holds. Now since $ \sigma^m(x) $ is of dimension $ \dim(x) - m $ we also have $ t^{d+1-m}(\sigma^m(x)) \equiv \sigma^m(x) $ which proves~\ref{enu:item2}. Assume now that the statements hold for some $ 1 < n $. For~\ref{enu:item1} we compute
    \begin{align} \label{eq:ConePropSourceIsTarget}
        \sigma^{n-1}(t^n(x)) \equiv t^1(\sigma^{n-1}(x)) \equiv \tau(p_{\sigma^{n-1}(x)}) \equiv \tau^{n-1}\left(1^{n-2}_{p_{\sigma^{n-1}(x)}}\right).
    \end{align}
    Here the first and second identities hold by the inductive hypothesis. Notice that in order to define $ t^{n-1}(x) $ starting with $ t^n(x) $ we also made use of $ p_{\sigma^{n-1}(x)} $. But $ \sigma^{n-1}(x) $ is a variable which necessarily appears before $ x $, so that the context currently at hand already contains the corresponds projection, allowing us to form this term. From equation~\ref{eq:ConePropSourceIsTarget} we conclude that the two terms in
    \begin{align*}
        t^{n-1}(x) \equiv 1^{n-2}_{p_{\sigma^{n-1}(x)}} \mast{d}{d-n+1} t^n(x) : T_{n-1}(x)
    \end{align*}
    are composable, rendering the composite derivable. Regarding~\ref{enu:item2}, for all $ 0 \le m < n-1 $ we have
    \begin{align*}
        \sigma^m(t^{n-1}(x)) &\equiv \sigma^m\left(1^{n-2}_{p_{\sigma^{n-1}(x)}} \mast{d}{d-n+1} t^n(x)\right) \\
        &\equiv \sigma^m\left(1^{n-2}_{p_{\sigma^{n-1}(x)}} \right) \mast{d-m}{d-n+1} \sigma^m (t^n(x)) \\
        &\equiv 1^{n-m-2}_{p_{\sigma^{n-1}(x)}} \mast{d-m}{d-n+1} t^{n-m}(\sigma^m(x)) \\
        &\equiv t^{n-m-1}(\sigma^m(x))
    \end{align*}
    where in the second line we used the fact that~\ref{enu:item2} holds for $ t^n(x) $.
    
    Now, for~\ref{enu:item3}, an argument similar to that for the base case of~\ref{enu:item2} shows that~\ref{enu:item3} holds which applies since $ n = d + 1 \ge 2 $. Now, assuming $ n-1 \ge 2 $, we compute
    \begin{align*}
        \tau(t^{n-1}(x)) &\equiv \tau\left(1^{n-2}_{p_{\sigma^{n-1}(x)}} \mast{d}{d-n+1} t^n(x) \right) \\
        &\equiv \tau\left( 1^{n-2}_{p_{\sigma^{n-1}(x)}} \right) \mast{d-1}{d-n+1} \tau(t^n(x)) \\
        &\equiv 1^{n-3}_{p_{\sigma^{n-1}(x)}} \mast{d-1}{d-n-1} t^{n-1}(\tau(x)) \\
        &\equiv 1^{n-3}_{p_{\sigma^{n-2}(\tau(x))}} \mast{d-1}{d-n-1} t^{n-1}(\tau(x)) \\
        &\equiv t^{n-2}(\tau(x)).
    \end{align*}

    For~\ref{enu:item4} we have $ \FV(t^d(x)) = \FV( 1^{d-1}_{p_{\sigma^d(x)}} \mast{d}{0} x) = \FV(p_{\sigma^d(x)} : T_{\sigma^d(x)}) \cup \FV(x : A) $. For the inductive step,
    \begin{align*}
        \FV(t^{n}(x) : T_n(x)) &= \FV\left(1^{n-1}_{p_{\sigma^{n}(x)}} \mast{d}{d-n} t^{n+1}(x) : T_n(x) \right) \\
        &= \FV(p_{\sigma^n(x)} : T_{\sigma^n(x)}) \cup \FV(t^{n+1}(x) : T_n(x)) \\
        &= \FV(p_{\sigma^n(x)} : T_{\sigma^n(x)}) \cup \FV(p_{\sigma^{n+1}(x)} : T_{\sigma^{n+1}(x)}) \cup \FV(x : A) \\
        &= \FV(p_{\sigma^n(x)} : T_{\sigma^n(x)}) \cup \FV(x : A)
    \end{align*}
    where we used the fact that $ \FV(p_{\sigma^{n+1}(x)} : T_{\sigma^{n+1}(x)}) \subset \FV(p_{\sigma^n(x)} : T_{\sigma^n(x)}) $.

    As for~\ref{enu:item5}, by definition $ t^{d+1}(x) \equiv x : A $, and indeed, $ x $ is linear in $ x $ by definition. Now assume that $ x \propto t^n(x) $ for some $ 1 \le n $ and we ask if $ x $ is linear in $ t^{n+1}(x) \equiv 1^{n-1}_{p_{\sigma^n}(x)} \mast{d}{d-n} t^{n+1}(x) : T_{n-1}(x) $, which is the case, as it only appears in $ t^{n+1}(x) $.

    Using the properties~\ref{enu:item2} and~\ref{enu:item3}, we find
    \begin{gather*}
        \tau(t^1(x)) \equiv \tau \left(p_{\sigma(x)} \mast{d}{d-1} t^2 \right) \equiv \tau(t^2(x)) \equiv t^1(\tau(x)) \equiv \tau(p_{\tau(x)}) \\[2mm]
        \sigma(t^1(x)) \equiv \sigma\left(p_{\sigma(x)} \mast{d}{d-1} t^2 \right) \equiv \sigma\left(p_{\sigma(x)}\right) \equiv p_{\sigma^2(x)} \equiv p_{\sigma \tau(x)} \equiv \sigma\left(p_{\tau(x)}\right)
    \end{gather*}
    from which we conclude $ t^1(x) \parallel p_{\tau(x)} $. This allows us to derive the judgment $ p_{\tau(x)} \to t^1(x) $.

    Now, by the inductive hypothesis
    \begin{align*}
        \FV(p_{\tau(x)} : T_{\tau(x)}) = \FV(c: \Ob) \cup \bigcup_{ \substack{p_{x_i} \in \FV(\Pi) \\ y \in \FV(\tau(x) \, : \, T_{\tau(x)}) \\ y \, \propto \, \tau(p_{x_i}) }} \FV(p_{x_i} : T_{x_i}).
    \end{align*}
    On the other hand, using~\ref{enu:item4} we have
    \begin{align*}
        \FV(t^1(x) : T_1(x)) &= \FV(x : A) \cup \FV(p_{\sigma(x)} : T_{\sigma(x)}) \\
        &= \FV(x : A) \cup \bigcup_{ \substack{p_{x_i} \in \FV(\Pi) \\ y \in \FV(\sigma(x) \, : \, T_{\sigma(x)}) \\ y \, \propto \, \tau(p_{x_i}) }} \FV(p_{x_i} : T_{x_i})
    \end{align*}
    which shows that $ p_{\tau(x)} \to t^1(x) $ satisfies the conditions of (KE) allowing us to produce the cone $ C(\Gamma,x:A) \equiv \Gamma, x : A, c : \Ob, \Pi, p_x : p_{\tau(x)} \to t^1(x) $.

    To complete the induction we need to show that equation~\ref{eq:FVproj} still holds for all variables in $ \Gamma $. On top of that we need to show that $ x : A $ satisfies equation~\ref{eq:FVproj} as well as the two properties stated right after. We start with the latter.

    A quick inductive argument shows that $ c \in \FV(p_x : T_x) $. Also, by~\ref{enu:item4}
    \begin{align*}
        \FV(p_x : T_x) &= \FV(p_{\tau(x)} : T_{\tau(x)}) \cup \FV(t^1(x) : T_1(x)) \\
        &= \FV(p_{\tau(x)} : T_{\tau(x)}) \cup \FV(p_{\sigma(x)} : T_{\sigma(x)}) \cup \FV(x : A).
    \end{align*}
    Since $ \sigma(x) \propto \tau(p_{\sigma(x)}) $ and $ \tau(x) \propto \tau(p_{\tau(x)}) $, we know that the variables $ p_{\sigma(x)}, p_{\tau(x)}$ and $x$ all have the same dimension. Moreover, by Lemma~\ref{lem:FreeVariablesBoundTypeDim} these are the only variables of dimension $ \dim(x) $. Thus $ x $ is the unique variable of $ \Gamma $ such that $ x \propto \tau(p_x) $. This fact guarantees that $ p_x $ satisfies equation~\ref{eq:FVproj} and that all the projections in $ \Pi $ continue to satisfy the same equation in the extended cone over $ \Gamma, x : A $.
\end{proof}

It is possible to give algorithmic constructions for the cones also for more complicated diagrams in the strict world. In the following examples we will suppress the labels of the binary operations in the interest of making the equations more readable and the global form more evident.

\begin{example}
    Consider the diagram
    \begin{center}
        \begin{tikzcd}[row sep={2em}, column sep={2em}]
            & x \\
            \bullet && \bullet \\
            \bullet && \bullet \\
            \bullet && \bullet \\
            & \bullet
            \arrow["{f_1}"', from=1-2, to=2-1]
            \arrow["{g_1}", from=1-2, to=2-3]
            \arrow["{f_2}"', from=2-1, to=3-1]
            \arrow["{g_2}", from=2-3, to=3-3]
            \arrow["\alpha", shorten <=19pt, shorten >=19pt, Rightarrow, from=3-1, to=3-3]
            \arrow["\vdots"{description}, draw=none, from=3-1, to=4-1]
            \arrow["\vdots"{description}, draw=none, from=3-3, to=4-3]
            \arrow["{f_n}"', from=4-1, to=5-2]
            \arrow["{g_m}", from=4-3, to=5-2]
        \end{tikzcd}
    \end{center}
    Then, in the strict case, the projection corresponding to the variable $ \alpha $ can be taken to be of the type:
    \begin{gather*}
        p_{g_m} \cdot (p_{g_{m-1}} \ast 1_{g_m}) \cdot (p_{g_{m-2}} \ast 1_{g_{m-1}\cdot g_m}) \cdots (p_{g_1} \ast 1_{g_2 \cdots g_m}) \\
        \downarrow \\
        p_{f_n} \cdot (p_{f_{n-1}} \ast 1_{f_n}) \cdot (p_{f_{n-2}} \ast 1_{f_{n-1}\cdot f_n}) \cdots (p_{f_1} \ast 1_{f_2 \cdots f_n}) \cdot (1_{p_x} \ast \alpha).
    \end{gather*}
    By defining the terms $ p_{f_1 \cdots f_n} $ and  $ p_{g_1 \cdots g_m} $ appropriately we can rewrite this as
    \begin{align*}
        p_{g_1 \cdots g_m} \to p_{f_1 \cdots f_n} \cdot (1_{p_x} \ast \alpha).
    \end{align*}
\end{example}

\begin{example}
    Consider the diagram
    \begin{center}
        \begin{tikzcd}[row sep=large]
            x &&&& x \\
            \\
            y &&&& y
            \arrow[""{name=0, anchor=center, inner sep=0}, "f"', bend right=100, from=1-1, to=3-1]
            \arrow[""{name=1, anchor=center, inner sep=0}, "g", bend left=100, from=1-1, to=3-1]
            \arrow[""{name=2, anchor=center, inner sep=0}, bend right=20, from=1-1, to=3-1]
            \arrow[""{name=3, anchor=center, inner sep=0}, bend left=20, from=1-1, to=3-1]
            \arrow[""{name=4, anchor=center, inner sep=0}, "f"', bend right=100, from=1-5, to=3-5]
            \arrow[""{name=5, anchor=center, inner sep=0}, "g", bend left=100, from=1-5, to=3-5]
            \arrow[""{name=6, anchor=center, inner sep=0}, bend right=20, from=1-5, to=3-5]
            \arrow[""{name=7, anchor=center, inner sep=0}, bend left=20, from=1-5, to=3-5]
            \arrow["{\alpha_1}", shorten <=4pt, shorten >=4pt, Rightarrow, from=0, to=2]
            \arrow["\cdots"{description}, draw=none, from=2, to=3]
            \arrow["{\alpha_n}", shorten <=4pt, shorten >=4pt, Rightarrow, from=3, to=1]
            \arrow["\phi", shorten <=20pt, shorten >=20pt, Rightarrow, scaling nfold=3, from=1, to=4]
            \arrow["{\beta_1}", shorten <=4pt, shorten >=4pt, Rightarrow, from=4, to=6]
            \arrow["\cdots"{description}, draw=none, from=6, to=7]
            \arrow["{\beta_m}", shorten <=4pt, shorten >=4pt, Rightarrow, from=7, to=5]
        \end{tikzcd}
    \end{center}
    For this diagram, again in the strict case, the projection corresponding to the variable $ \phi $ can be taken to be of the type:
    \begin{gather*}
        p_{\beta_m} \cdot (p_{\beta_{m-1}} \ast 1_{1_{p_x} \ast \beta_m}) \cdot (p_{\beta_{m-2}} \ast 1_{1_{p_x} \ast (\beta_{m-1}\cdot \beta_m)}) \cdots (p_{\beta_1} \ast 1_{1_{p_x} \ast (\beta_2 \cdots \beta_m)}) \\
        \downarrow \\
        p_{\alpha_n} \cdot (p_{\alpha_{n-1}} \ast 1_{1_{p_x} \ast \alpha_n}) \cdot (p_{\alpha_{n-2}} \ast 1_{1_{p_x} \ast (\alpha_{n-1}\cdot \alpha_n)}) \cdots (p_{\alpha_1} \ast 1_{1_{p_x} \ast (\alpha_2 \cdots \alpha_n)}) \cdot (1_{p_f} \ast (1_{1_{p_x}} \ast \phi)).
    \end{gather*}
    Again, by defining $ p_{\alpha_1 \cdots \alpha_n} $ and $ p_{\beta_1 \cdots \beta_m} $ appropriately we can rewrite this as
    \begin{align*}
        p_{\beta_1 \cdots \beta_m} \to p_{\alpha_1 \cdots \alpha_n} \cdot (1_{p_f} \ast (1_{1_{p_x}} \ast \phi)).
    \end{align*}
\end{example}

Returning to the weak higher categorical world, for low dimensional cases it is possible to construct a term $p_t $ also for the case where $ t $ is a coherence. The examples suggest that, given a term $ \Gamma \vdash t : A $ in a context, it is possible to glue the projections corresponding to the variables of $ t : A $ appropriately, and build a term $ p_t $, the type of which is of a type analogous to that in Theorem~\ref{prop:ConePsContext}. Based on this we formulate the conjecture.

\begin{conjecture}
    Let $ K $ be a cone over a diagram $ \Gamma $ with apex $ c $. Given a term $ \Gamma \vdash t : A $ of dimension $ d $, there exists a term $ K \vdash p_t : T_t $ such that $ T_t $ is given by
    \begin{align*}
        &c \to t &&\text{if } \dim(t) = 0 \\
        &p_{\tau(t)} \to p_{\sigma(t)} \mast{d}{d-1} \Big(1_{p_{\sigma^2(t)}}^{1} \mast{d}{{d}-2} \Big( 1^{2}_{p_{\sigma^{3}(t)}} \mast{d}{d-3} \dots \Big(1_{p_{\sigma^{d}(t)}}^{d - 1} \mast{d}{0} t \Big) \Big) \Big) &&\text{if } \dim(t) > 0.
    \end{align*}
    Moreover, writing $ K \equiv \Gamma, c: \Ob, \Pi $, we have
    \begin{align*}
        \FV(p_t : T_t) = \bigcup_{ \substack{p_{x_i} \in \FV(\Pi) \\ y \in \FV(t \, : \, A) \\ y \, \propto \, \tau(p_{x_i}) }} \FV(p_{x_i} : T_{x_i}).
    \end{align*}
\end{conjecture}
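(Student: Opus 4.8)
The plan is to proceed by induction on the derivation of $\Gamma \vdash t : A$, i.e.\ on the structure of the term $t$, constructing $p_t$ and simultaneously verifying the displayed shape of $T_t$ and the free-variable identity, along the lines of the proof of Theorem~\ref{prop:ConePsContext}. In the base case $t \equiv x$ is a variable of $\Gamma$, and we take $p_t :\equiv p_x$, the projection recorded in $\Pi$; its type has the prescribed form by construction when $K$ is the explicit cone of Theorem~\ref{prop:ConePsContext}, and for a general cone one expects the side conditions of $(\textup{KE})$ to pin down the boundary of $p_x$ tightly enough that a coherence connects it to the displayed type $T_x$. The substance of the argument is in the cases $t \equiv \op_{\Theta, u \to_B v}[\gamma]$ and $t \equiv \coh_{\Theta, u \to_B v}[\gamma]$, where $\Theta$ is a ps-context and $\Gamma \vdash \gamma : \Theta$; I would treat these in two steps.

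First, since $\Theta$ is globular it carries, by Theorem~\ref{prop:ConePsContext}, the explicit cone $C(\Theta) \equiv \Theta, c_\Theta : \Ob, \Pi_\Theta$, so it suffices to build inside $C(\Theta)$ a term $p_w$ of the prescribed type for the ``universal'' operation $w \equiv \op_{\Theta, u \to_B v}[\id_\Theta]$ (resp.\ $w \equiv \coh_{\Theta, u\to_B v}[\id_\Theta]$). Granting this, I would assemble the terms $p_{\gamma_{x_i}}$ supplied by the induction hypothesis for the variables $x_i$ of $\Theta$, together with $c_\Theta \mapsto c$, into a substitution $K \vdash \hat\gamma : C(\Theta)$ and set $p_t :\equiv p_w[\hat\gamma]$. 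That $\hat\gamma$ is well typed reduces to the compatibility $T^{C(\Theta)}_{x_i}[\hat\gamma] \equiv T^{K}_{\gamma_{x_i}}$, which should follow because the formula for $T_{x_i}$ mentions only other projections, iterated units, $\sigma$, $\tau$ and the binary composites $\mast{d}{k}$, all of which commute with substitution, once one knows---as in items~\ref{enu:item2}--\ref{enu:item3} of Theorem~\ref{prop:ConePsContext}, to be reproved here---that sources and targets of the $p_{(-)}$-terms are again $p_{(-)}$-terms of iterated boundaries. The type of $p_t$ is then $T_w[\hat\gamma]$, which unfolds to the claimed $T_t$ using $\sigma^k(w)[\hat\gamma] \equiv \sigma^k(t)$ and $1^j_{p_{\sigma^k(w)}}[\hat\gamma] \equiv 1^j_{p_{\sigma^k(t)}}$, and the free-variable identity drops out of $\FV(\hat\gamma)$ and the inductive free-variable identities exactly as in Theorem~\ref{prop:ConePsContext}.

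What remains---and what makes the statement only a conjecture---is the construction of $p_w$ inside $C(\Theta)$ for an operation or coherence on a ps-context. The strategy is to peel off the generation of $\Theta$ by $(\textup{PSS})$, $(\textup{PSE})$ and $(\textup{PSD})$ and, in parallel, to exhibit $p_w$ as an explicit iterated composite of the projections of $\Pi_\Theta$ whiskered by units, following the pattern worked out for the two families of $2$-dimensional pasting diagrams in the Examples above---for a string of composable arrows, for instance, the projection of the composite is $p_{f_n} \cdot (p_{f_{n-1}} \ast 1_{f_n}) \cdots (p_{f_1} \ast 1_{f_2 \cdots f_n})$. Two genuine difficulties arise. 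First, one must give a uniform, dimension-agnostic description of this ``cone over a pasting diagram'' composite and prove that, after inserting the requisite associativity, unit and interchange coherences (further applications of $\coh$), its source and target reduce to the $p_{\tau(w)}$- and $p_{\sigma(w)}$-data, so that the prescribed type $T_w$ is actually inhabited; morally this is the statement that $t \mapsto p_t$ assembles into a lax functor out of the walking pasting diagram, and pinning it down in \CaTT{} requires a delicate simultaneous induction over all iterated sources and targets---precisely the bookkeeping that Theorem~\ref{prop:ConePsContext} only carries out for variables. Second, the coherence corrections introduced in the variable base case for a non-canonical cone must be threaded through these composites. I expect the first difficulty to be the main obstacle.
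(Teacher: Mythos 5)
The statement you are asked to prove is stated in the paper as a \emph{conjecture}: the paper offers no proof, only two worked families of two-dimensional examples (treated in the strict setting) as evidence. So there is no ``paper proof'' to compare against, and the relevant question is whether your proposal closes the gap that led the author to leave this open. It does not, and you say so yourself: the construction of $p_w$ for a single operation $\op_{\Theta, u \to_B v}$ or coherence $\coh_{\Theta, u \to_B v}$ over a pasting context $\Theta$ --- a uniform, dimension-agnostic composite of the projections of $C(\Theta)$ whose boundary, after inserting associativity, unit and interchange coherences, reduces to the prescribed $p_{\tau(w)}$ and $p_{\sigma(w)}$ data --- is exactly the missing content of the conjecture, and your proposal defers it rather than supplying it. The reduction you do carry out (induct on the term, pull the problem back to the universal operation on its pasting context via a substitution $K \vdash \hat\gamma : C(\Theta)$ sending $x_i \mapsto \gamma_{x_i}$ and $p_{x_i} \mapsto p_{\gamma_{x_i}}$) is sensible and consistent with how \CaTT{} terms are generated, but it is scaffolding around the open problem, not a solution to it.

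Two further points deserve flagging. First, your base case is weaker than you suggest: for a general cone $K$ (as opposed to the explicit cone of the globular existence theorem), the side conditions of (KE) constrain only the free variables, the linearity of $x$ in the target, and categoricity --- they do not force the type of $p_x$ to be syntactically the displayed formula $T_x$. So even for $t$ a variable you must \emph{construct} a term of the displayed type from the given $p_x$, which already requires an invertibility or transport argument that the paper's framework (coinductive equivalences appear only later, in the universal-property rules) does not hand you for free. Second, the well-typedness of $\hat\gamma$ requires the inductive free-variable and boundary identities ($\sigma^m(p_t)$ and $\tau(p_t)$ being again $p$-terms of boundaries of $t$) to hold on the nose after the coherence corrections of the first point have been threaded through; you mention this but do not control it. In short: your proposal is an honest and reasonable plan of attack that correctly isolates the hard kernel of the conjecture, but it is not a proof, and the statement remains open both in the paper and in your write-up.
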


Assuming the conjecture we can now prove the existence of cones as contexts for all diagrams.

\begin{theorem}
    Given a context $ \Gamma $ there exists a context $ \Gamma, c : \Ob, \Pi $ such that the judgment $ \Gamma, c: \Ob, \Pi \, \textup{\texttt{cone}} \, \Gamma $ is derivable.
\end{theorem}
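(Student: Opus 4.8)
The plan is to run an induction on the length of $\Gamma$ that mirrors the proof of Theorem~\ref{prop:ConePsContext}, with every appeal to the explicit globular data replaced by an appeal to the preceding Conjecture. The base case $\Gamma \equiv \emptyset$ is immediate: (EK) gives $c : \Ob \, \cone \, (\emptyset; c)$. For the inductive step, assume a derivation of $K \, \cone \, (\Gamma; c)$ with $K \equiv \Gamma, c : \Ob, \Pi$ and let $\Gamma, x : X$ be a one-variable extension; set $d := \dim(x)$. If $d = 0$ then $X \equiv \Ob$, the type $c \to x$ is derivable over $\Gamma, x : \Ob, c : \Ob, \Pi$, and the side conditions $\tau\overline{\textup{Cond}}(c \to x, x : \Ob, c : \Ob, \Pi)$ and $\sigma\textup{Cond}(c \to x, x : \Ob, \Pi)$ hold trivially (the index set of the union is empty and $x \propto x$), so (KE) yields a cone over $\Gamma, x : \Ob$.

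Suppose now $d > 0$ and write $X \equiv \sigma(x) \to_{\partial X} \tau(x)$. The iterated sources $\sigma^k(x)$ ($1 \le k \le d$) and $\tau(x)$ are terms derivable over $\Gamma$, hence over $K$, so the Conjecture supplies terms $K \vdash p_{\sigma^k(x)} : T_{\sigma^k(x)}$ and $K \vdash p_{\tau(x)} : T_{\tau(x)}$ with the types and free-variable sets stated there; by weakening they remain derivable over $\Gamma, x : X, c : \Ob, \Pi$, and there is no circularity because they come from the Conjecture applied to the \emph{already-constructed} cone $K$ over $\Gamma$. Following the inner induction of Theorem~\ref{prop:ConePsContext} verbatim, set $t^{d+1}(x) :\equiv x$ and $t^{n}(x) :\equiv 1^{n-1}_{p_{\sigma^{n}(x)}} \mast{d}{d-n} t^{n+1}(x)$ for $1 \le n \le d$, and prove by descending induction on $n$ the analogues of properties \ref{enu:item1}--\ref{enu:item5}: derivability of $t^n(x)$, the compatibilities $\sigma^m(t^n(x)) \equiv t^{n-m}(\sigma^m(x))$ and $\tau(t^n(x)) \equiv t^{n-1}(\tau(x))$, the free-variable identity $\FV(t^n(x) : T_n(x)) = \FV(x : X) \cup \FV(p_{\sigma^n(x)} : T_{\sigma^n(x)})$ for $n < d+1$, and linearity $x \propto t^n(x)$. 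These go through exactly as before once the globular free-variable formula~\ref{eq:FVproj} is replaced by the Conjecture's free-variable clause and the Conjecture's type formula is used in place of the globular one. The compatibilities then give $t^1(x) \parallel p_{\tau(x)}$, so $p_{\tau(x)} \to t^1(x)$ is a derivable type; combining the free-variable identity for $t^1(x)$ with the Conjecture's clause for $p_{\tau(x)}$ shows this type satisfies $\tau\overline{\textup{Cond}}$ on its source $p_{\tau(x)}$ and $\sigma\textup{Cond}$ on its target $t^1(x)$ (linearity coming from \ref{enu:item5}), and (KE) produces the cone $\Gamma, x : X, c : \Ob, \Pi, p_x : p_{\tau(x)} \to t^1(x) \, \cone \, ((\Gamma, x : X); c)$. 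A final step, copying the closing paragraph of the proof of Theorem~\ref{prop:ConePsContext}, checks that $x$ is the unique variable of $\Gamma, x : X$ with $x \propto \tau(p_x)$ and that every projection already in $\Pi$ still satisfies the Conjecture's free-variable clause relative to the enlarged diagram — the content the outer induction hypothesis must carry.

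I expect the real work to be the free-variable accounting rather than any new construction. Two spots need care. First, the monotonicity $\FV(p_{\sigma^{n+1}(x)} : T_{\sigma^{n+1}(x)}) \subseteq \FV(p_{\sigma^{n}(x)} : T_{\sigma^{n}(x)})$ used to collapse the union in \ref{enu:item4}: in Theorem~\ref{prop:ConePsContext} this rested on Lemma~\ref{lem:FreeVariablesBoundTypeDim}, and here it should follow from the Conjecture's free-variable clause together with the evident inclusion of the free variables of $\sigma^{n+1}(x)$ (and its type) among those of $\sigma^{n}(x)$. Second, verifying that appending $p_x$ does not break the Conjecture's free-variable clause for the pre-existing projections. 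Making this airtight may require stating the Conjecture in the strengthened form that also asserts the bookkeeping Theorem~\ref{prop:ConePsContext} threads through its induction (namely $c \in \FV(p_t : T_t)$ and that the variables realising the relevant linearity relations are exactly the dimension-$\dim(t)$ variables involved); I would formulate that strengthened form and invoke it here, so that the present proof is a faithful re-run of Theorem~\ref{prop:ConePsContext} with the Conjecture substituted for its explicit construction.
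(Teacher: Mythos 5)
Your proposal is correct and follows exactly the route the paper intends: the paper's entire proof is the single sentence ``This proof is similar to that of Theorem~\ref{prop:ConePsContext},'' and your write-up is precisely that induction rerun with the Conjecture supplying the terms $p_{\sigma^k(x)}$, $p_{\tau(x)}$ in place of the globular projections. Your observation that the Conjecture may need to be stated in a strengthened form to carry the free-variable bookkeeping through the outer induction is a fair and useful caveat, but it does not change the fact that the approach matches the paper's.
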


\begin{proof}
    This proof is similar to that of Theorem~\ref{prop:ConePsContext}.
\end{proof}

\subsection{The universal cone} \label{subsec:UniCone}

The rules of the judgment $ K \texttt{ cone } (\Gamma;c) $ allow us to generate cones as contexts for a given diagram. We now want to use this to build the universal cone. Given a diagram $ \Gamma $ and a cone $ K $ over it as a context $ K $, the universal cone is given by a collection of terms that can be assembled into a context morphism $ \Gamma \vdash \texttt{ucone} : K $.

\begin{example} \label{ex:UniCone}
    Consider for example the context $ \Gamma \equiv x : \Ob, y : \Ob, f : x \to y $.
    All the terms of the universal cone assemble into the context morphisms
    \begin{align*}
        \Gamma \vdash
    \raisebox{-1.6ex}{\resizebox{1.2\width}{3.3\height}{\Bigg \langle}}
        \begin{matrix*}
            x \\[1mm]
            y \\[1mm]
            f \\[1mm]
            \limit_\Gamma \\[1mm]
            \up_x \\[1mm]
            \up_y \\[1mm]
            \up_f
        \end{matrix*}
    \raisebox{-1.6ex}{\resizebox{1.2\width}{3.3\height}{\Bigg \rangle}} :
        \begin{pmatrix*}[l]
            x : \Ob \\[1mm]
            y : \Ob \\[1mm]
            f : x \to y \\[1mm]
            c : \Ob \\[1mm]
            p_x : c \to x \\[1mm]
            p_y : c \to y \\[1mm]
            p_f : p_y \to p_x \cdot f \\[1mm]
        \end{pmatrix*}
    \end{align*}
    Pictorially we have:
    \begin{equation} \label{diag:LimFXtoY}
        \begin{tikzcd}
            {\limit_\Gamma} \\
            & x \\
            & y
            \arrow["{\up_x}", from=1-1, to=2-2]
            \arrow[""{name=0, anchor=center, inner sep=0}, "{\up_y}"', bend right=30, from=1-1, to=3-2]
            \arrow["f", from=2-2, to=3-2]
            \arrow["{\up_f}"', shorten <=6pt, shorten >=4pt, Rightarrow, from=0, to=2-2]
        \end{tikzcd}
        \qquad\qquad\qquad\qquad
        \begin{tikzcd}
            c \\
            & x \\
            & y
            \arrow["{p_x}", from=1-1, to=2-2]
            \arrow[""{name=0, anchor=center, inner sep=0}, "{p_y}"', bend right=30, from=1-1, to=3-2]
            \arrow["f", from=2-2, to=3-2]
            \arrow["{p_f}"', shorten <=6pt, shorten >=4pt, Rightarrow, from=0, to=2-2]
        \end{tikzcd}
    \end{equation}
\end{example}
It is important to note that we explicitly build in new terms only for the cone apex and the projections in the context morphism. The cells in the context morphism corresponding to the underlying diagram are given by variables. Intuitively we think of this as allowing us to build a cone 
over a diagram of arbitrary objects and morphisms. By substitution this can be instantiated to any other specifically chosen objects and morphisms.

Unfortunately, we cannot simply spell out a rule which builds such a context morphism in one go. Due to the nature of type theory, any context morphism must be built step by step, otherwise there would be no way of accessing the terms it consists of. We are therefore forced to build the terms of the universal cone one by one with help of term constructor rules. Crucially, however, the higher cells of the universal cone depend on those of lower dimension, as dictated by the underlying diagram $ \Gamma $. As a result, when constructing a given term in the universal cone, we need to have kept track of all terms of the universal cone corresponding to the dependencies of the term at hand. We tackle this by collecting all terms at each step into a context morphism, which step by step approximates the context morphism $ \Gamma \vdash \texttt{ucone} : K $.

\paragraph{Rules for the universal cone.} The terms of the universal cone are built using the following term constructor
    \bigskip
    \begin{center}
        \begin{prooftree}
            \hypo{(\Theta,x:X,\Theta') \textup{\texttt{ cone }} (\Gamma;c)}
            \hypo{\Gamma \vdash^{\textup{\texttt{uni}}}_\Gamma \kappa : \Theta}
            \infer2{\Gamma \vdash \textup{\texttt{ucone}}_{\Gamma,x} : X[\kappa]}
        \end{prooftree}
    \end{center}
    The above term constructor refers to a new auxiliary judgment $ \Gamma \vdash^{\textup{\texttt{uni}}}_\Gamma \kappa : \Theta $ which we introduce and which is subject to the rules
    \bigskip
    \begin{center}
        \begin{prooftree}
            \hypo{\Gamma \vdash}
            \infer1{\Gamma \vdash^{\text{\textup{uni}}}_\Gamma \id_\Gamma : \Gamma}
        \end{prooftree}
    \end{center}
    \bigskip
    \begin{center}
        \begin{prooftree}
            \hypo{(\Theta,x:X,\Theta') \textup{\texttt{ cone }} (\Gamma;c)}
            \hypo{\Gamma \vdash^{\textup{\texttt{uni}}}_\Gamma \kappa : \Theta}
            \infer2{\Gamma \vdash^{\textup{\texttt{uni}}}_\Gamma \langle \kappa, \textup{\texttt{ucone}}_{\Gamma,x} \rangle : (\Theta, x : X)}
        \end{prooftree}
    \end{center}
    \bigskip
The term $ \texttt{ucone}_{\Gamma,x} $ also depends on the cone $ \Theta,x:X,\Theta' $ which should therefore also appear as a subscript in the term constructor. We suppress this for the sake of readability.

As in Example~\ref{ex:UniCone}, we will also write $ \limit_\Gamma :\equiv \texttt{ucone}_{\Gamma,c} $ where $ c : \Ob $ is the variable such that $ K \equiv \Gamma, c : \Ob, \Pi $. When no confusion can arise we will also write $ \texttt{u}x :\equiv \texttt{ucone}_{\Gamma,x} $, again as in the example.

\begin{lemma}
    Given a judgment $ \Gamma \vdash^{\textup{\texttt{uni}}}_\Gamma \kappa : \Theta $ we have
    \begin{align*}
        \Gamma \vdash, \qquad \Theta \vdash, \qquad \Gamma \vdash \kappa : \Theta.
    \end{align*}
    Moreover, the context $ \Gamma \vdash \kappa : \Theta $ is of the form $ \Gamma \vdash \langle \id_\Gamma, \limit_\Gamma, \textup{\texttt{u}}p_1, \dots, \textup{\texttt{u}}p_m \rangle : (\Gamma, c : \Ob, p_1 : T_1, \dots, p_m : T_m) $ for some $ 1 \le m \le |\Gamma| $.
\end{lemma}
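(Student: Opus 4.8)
The plan is to prove this by induction on the derivation of the judgment $ \Gamma \vdash^{\textup{\texttt{uni}}}_\Gamma \kappa : \Theta $, following exactly the two rules defining this auxiliary judgment, and invoking Lemma~\ref{lem:ConeAdmiss} at the points where a cone judgment is a premise. There are two cases to treat.

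For the base case, the derivation ends with the first rule, so $ \kappa \equiv \id_\Gamma $ and $ \Theta \equiv \Gamma $, with premise $ \Gamma \vdash $. Then $ \Gamma \vdash $ and $ \Theta \vdash $ are both just the premise, and $ \Gamma \vdash \id_\Gamma : \Gamma $ is the standard identity substitution, which is well formed whenever $ \Gamma \vdash $ (admissibility of the identity substitution in \CaTT{}). The normal form statement holds vacuously here in the degenerate sense $ m = 0 $ — or, more carefully, one should note that this case only arises when we have not yet appended any projection, so one may want to phrase the statement to allow $ \Theta \equiv \Gamma $; I would either include $ m=0 $ as a boundary case or observe that the first application of the inductive rule immediately produces $ \limit_\Gamma $, matching the claimed form with $ m=0 $ afterward. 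For the inductive step, the derivation ends with the second rule: we have $ (\Theta, x : X, \Theta') \, \texttt{cone}\, (\Gamma;c) $ and $ \Gamma \vdash^{\textup{\texttt{uni}}}_\Gamma \kappa : \Theta $, and we must establish the three conclusions for $ \langle \kappa, \texttt{ucone}_{\Gamma,x} \rangle : (\Theta, x : X) $. By the induction hypothesis applied to the shorter derivation, $ \Gamma \vdash $, $ \Theta \vdash $, and $ \Gamma \vdash \kappa : \Theta $ all hold, and $ \kappa $ has the claimed form. From $ (\Theta, x : X, \Theta') \, \texttt{cone}\, (\Gamma;c) $ together with Lemma~\ref{lem:ConeAdmiss} we get $ (\Theta, x : X, \Theta') \vdash $, hence by inspecting the context-formation rules (CE) also $ (\Theta, x : X) \vdash $ and $ \Theta \vdash X $. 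The term constructor rule for $ \texttt{ucone} $, whose premises are exactly $ (\Theta, x : X, \Theta') \, \texttt{cone}\, (\Gamma;c) $ and $ \Gamma \vdash^{\textup{\texttt{uni}}}_\Gamma \kappa : \Theta $ — both available — gives $ \Gamma \vdash \texttt{ucone}_{\Gamma,x} : X[\kappa] $. Then the substitution-extension rule \sSE{} applied to $ \Gamma \vdash \kappa : \Theta $, $ (\Theta, x : X) \vdash $, and $ \Gamma \vdash \texttt{ucone}_{\Gamma,x} : X[\kappa] $ yields $ \Gamma \vdash \langle \kappa, \texttt{ucone}_{\Gamma,x} \rangle : (\Theta, x : X) $, as required.

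For the normal-form claim, I would combine the inductive description of $ \kappa $ with Lemma~\ref{lem:ConeAdmiss}: that lemma tells us $ K \equiv \Gamma, c : \Ob, \Pi $ with $ |\Pi| = |\Gamma| $, so any context $ \Theta $ that occurs as the codomain of $ \kappa $ in a derivation of $ \Gamma \vdash^{\textup{\texttt{uni}}}_\Gamma \kappa : \Theta $ must be a prefix of $ K $, i.e. either $ \Gamma $ itself or $ \Gamma, c : \Ob, p_1 : T_1, \dots, p_m : T_m $ with $ 1 \le m \le |\Gamma| $. The first rule produces the prefix $ \Gamma $ with $ \kappa \equiv \id_\Gamma $; the second rule, since it must append the variables of $ K $ in order starting with $ c $, forces the appended term to be $ \limit_\Gamma $ at the first step and $ \texttt{u}p_i $ thereafter, and the substitution accumulates exactly as $ \langle \id_\Gamma, \limit_\Gamma, \texttt{u}p_1, \dots, \texttt{u}p_m \rangle $. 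One subtlety worth flagging is whether $ \Theta $ could be a \emph{non-prefix} sublist of $ K $: the \sSE{} shape of the second rule together with the fact that in a derivation of $ (\Theta, x : X, \Theta') \, \texttt{cone}\, (\Gamma;c) $ the list $ \Theta, x : X$ is itself a valid cone over a prefix of $ \Gamma $ (by the $(\cone)$ rules) rules this out; spelling this dependency out cleanly is the main thing requiring care.

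The bulk of the argument is routine once the bookkeeping is set up; the only genuine obstacle is making precise the claim that the codomains $ \Theta $ appearing along a derivation are exactly the prefixes of $ K $ and that the accumulated substitution is forced to the stated normal form — this relies on the strict left-to-right discipline of the $(\cone)$ rules (the variable $ p_x $ appended in (KE) always sits at the end, and its position is determined by the position of $ x $ in $ \Gamma $), which I would isolate as a small auxiliary observation, possibly folding it into the statement or proof of Lemma~\ref{lem:ConeAdmiss}.
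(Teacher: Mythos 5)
Your proposal is correct and takes essentially the same route as the paper, whose proof is simply the one-line ``By induction'' that you have fleshed out: induction on the derivation, using Lemma~\ref{lem:ConeAdmiss} to obtain $(\Theta,x:X,\Theta')\vdash$ and hence $\Theta\vdash X$, the \texttt{ucone} term constructor for the appended term, and \sSE{} to assemble the extended substitution. Your observation that the base case $\Theta\equiv\Gamma$ (and the first inductive step, $m=0$) is not literally covered by the stated bound $1\le m\le|\Gamma|$ is a fair reading of a small imprecision in the statement, not a gap in the argument.
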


\begin{proof}
    By induction.
\end{proof}

The process terminates once we have derived the judgment $ \Gamma \vdash^{\textup{\texttt{uni}}}_\Gamma \kappa : K $ where $ K $ is some cone over $ \Gamma $, producing a context morphism as in Example~\ref{ex:UniCone}. In the same way that $ K $ factorizes as $ \Gamma, c : \Ob, \Pi $, we have a corresponding factorization $ \kappa \equiv \langle \id_\Gamma, \lim_\Gamma, \overline{\texttt{u}p} \rangle $. Here we use the notation $ \overline{\texttt{u}p} $ to emphasize that we have a string of terms of the form $ \texttt{u}p $.

\begin{definition}
    If $ K $ is a cone over $ \Gamma $, given a derivable judgment $ \Gamma \vdash^{\textup{\texttt{uni}}}_\Gamma \kappa : K $ we say $ \Gamma \vdash \kappa : K $, or simply $ \kappa $, is a universal cone over $ \Gamma $ of shape $ K $.
\end{definition}

\begin{remark}
    A cone morphism contains the same information as a cone on a cone on the underlying diagram. With minor modifications it is possible to make use of the rules for cones, to generate the cells of the universal cone morphism from an arbitrary cone to the universal cone. In fact, cones might suffice to encode all higher transforms as well, making it possible to spell out the entire universal property only in terms of cones. Instead of this we will pursue another path by giving a rules which produce contexts with the shape of modifications between cones, as well as all higher transfors in a uniform way.
\end{remark}

\section{The universal property} \label{sec:UniProp}

\subsection{Gray operations} \label{subsec:Gray}

In this section we spell out a set of rules which generate the cells of the Gray tensor product of a diagram with a $d$-globe. Following Loubaton~\cite{loubaton2023theory} we refer to these constructions as Gray operations. We think of these as shapes for natural transformations and all higher transfors. The natural transformation is between two diagrams represented as contexts, one of which is the copy of the other. Note that at this stage we are only interested in generating the shape as a context. In the next section we will modify the rules so as to produce higher transfors between cones (thought of as natural transformations) and use this later to define the higher coherences of the universal cone as a context morphism.

The starting point is a diagram $ \Gamma $ given to us as a context. We then proceed in two steps: first each variable $ x \in \FV(\Gamma) $ is duplicated and second for each such variable $ x $ and its duplicate $ x' $ we append a new cell relating $ x $ and $ x' $ in a specified way. Let us showcase this with an example.

\begin{example} \label{ex:NatTrafoContext}
    Consider the diagram $ \Gamma \equiv x : \Ob, y : \Ob, f : x \to y, g : x \to y, \alpha : f \to g $. Duplication yields:
    \begin{alignat*}{5}
        \Gamma, \Gamma' \, :\equiv \ &x : \Ob, \ &&y : \Ob, \ &&f : x \to y, \ &&g : x \to y, \ &&\alpha : f \to g, \\
        &x' : \Ob, \ &&y' : \Ob, \ &&f' : x' \to y', \ &&g' : x' \to y', \ &&\alpha' : f' \to g'.
    \end{alignat*}
    Next for each variable in $ \Gamma $ and its corresponding duplicate we add a cell as follows:
    \begin{align*}
        \Gamma, \Gamma', \ &p_x : x \to x', \ p_y : y \to y', \\[1mm]
        &p_f : f \cdot p_y \to p_x \cdot f', \ p_g : g \cdot p_y \to p_x \cdot g', \\[1mm]
        &p_\alpha : (\alpha \ast 1_{p_y}) \cdot p_g \to p_f \cdot (1_{p_x} \ast \alpha').
    \end{align*}
    For later use we may also denote this context by $ \Gamma, \Gamma', P $. Pictorially we have
    \begin{center}
        \begin{tikzcd}
            x & {x'} \\
            y & {y'}
            \arrow["{{p_x}}", from=1-1, to=1-2]
            \arrow[""{name=0, anchor=center, inner sep=0}, "g", from=1-1, to=2-1]
            \arrow[""{name=1, anchor=center, inner sep=0}, "f"', bend right=60, from=1-1, to=2-1]
            \arrow["{{g'}}", from=1-2, to=2-2]
            \arrow["{{p_g}}"', Rightarrow, shorten <=10pt, shorten >=10pt, from=2-1, to=1-2]
            \arrow["{{p_y}}"', from=2-1, to=2-2]
            \arrow["\alpha", shorten <=2pt, shorten >=2pt, Rightarrow, from=1, to=0]
        \end{tikzcd}
        \qquad $ \stackrel{p_\alpha}{\equiv \mkern-2.5mu \Rrightarrow} $ 
        \begin{tikzcd}
            & x & {x'} \\
            {} & y & {y'}
            \arrow["{{p_x}}", from=1-2, to=1-3]
            \arrow["f"', from=1-2, to=2-2]
            \arrow[""{name=0, anchor=center, inner sep=0}, "{{f'}}"', from=1-3, to=2-3]
            \arrow[""{name=1, anchor=center, inner sep=0}, "{{g'}}", bend left=60, from=1-3, to=2-3]
            \arrow["{{p_f}}", Rightarrow, shorten <=10pt, shorten >=10pt, from=2-2, to=1-3]
            \arrow["{{p_y}}"', from=2-2, to=2-3]
            \arrow["{{\alpha'}}", shorten <=2pt, shorten >=2pt, Rightarrow, from=0, to=1]
        \end{tikzcd}
    \end{center}
\end{example}

\begin{example} \label{ex:ModiContext}
    Continuing Example~\ref{ex:NatTrafoContext}, let us now examine modifications. For this we begin with $ \Gamma, \Gamma', P $ and extend this context by adding the appropriate new variables. As for natural transformations we begin by duplicating certain cells. The cells we duplicate turn out to be precisely the variables in $ P $. Indeed we have
    \begin{align*}
        \Gamma, \Gamma', \ &p_x : x \to 'x, \ p_y : y \to y', \\[1mm]
        &p_f : f \cdot p_y \to p_x \cdot f', \ p_g : g \cdot p_y \to p_x \cdot g', \\[1mm]
        &p_\alpha : (\alpha \ast 1_{p_y}) \cdot p_g \to p_f \cdot (1_{p_x} \cdot \alpha'), \\[3mm]
        &p'_x : x \to x', \ \, p'_y : y \to y', \\[1mm]
        &p'_f : f \cdot p'_y \to p'_x \cdot f', \ \, p'_g : g \cdot p'_y \to p'_x \cdot g', \\[1mm]
        &p'_\alpha : (\alpha \ast 1_{p'_y}) \cdot p'_g \to p'_g \cdot (1_{p'_x} \cdot \alpha').
    \end{align*}
    As a short hand, let us denote this context by $ \Gamma, \Gamma', P, P' $ where $ P' $ contains all the duplicates of $ P $. Again, as for the natural transformation we now need to build in cells relating the variables in $ P $ with their duplicates, as follows:
    \begin{align*}
        \Gamma, \Gamma', P, P', \, & m_x : p_x \to p'_x, \ \, m_y : p_y \to p'_y \\[1mm]
        & m_f : p_f \cdot (m_x \ast 1_{f'}) \to (1_{f} \ast m_{y}) \cdot p'_f \\[1mm]
        & m_g : p_g \cdot (m_x \ast 1_{g'}) \to (1_{g} \ast m_{y}) \cdot p'_{g} \\[1mm]
        & m_\alpha : (p_\alpha \ast 1_{m_x \ast 1_{g'}}) \cdot (m_f\ast 1_{1_{p'_x}\ast\alpha'}) \to (1_{\alpha \ast 1_{p_y}} \ast m_g) \cdot (1_{1_f \ast m_y} \ast p'_\alpha). 
    \end{align*}
    which we may also abbreviate as $ \Gamma,\Gamma',P,P',M $. Diagrammatically the cell $ m_\alpha $ can be visualized as follows:
    \begin{center}
        \adjustbox{scale=0.9}{
        \begin{tikzcd}[row sep=35, column sep=40]
            &&& x & {x'} \\
            &&& y & {y'} \\
            x & {x'} &&&&& x & {x'} \\
            y & {y'} &&&&& y & {y'} \\
            &&& x & {x'} \\
            &&& y & {y'}
            \arrow[""{name=0, anchor=center, inner sep=0}, "{{p_x}}"', from=1-4, to=1-5]
            \arrow[""{name=1, anchor=center, inner sep=0}, "{{p'_x}}", bend left=50, from=1-4, to=1-5]
            \arrow[""{name=2, anchor=center, inner sep=0}, "f"', from=1-4, to=2-4]
            \arrow[""{name=3, anchor=center, inner sep=0}, "{{f'}}"', from=1-5, to=2-5]
            \arrow[""{name=4, anchor=center, inner sep=0}, "{g'}", bend left=50, from=1-5, to=2-5]
            \arrow["{{p_f}}"'{pos=0.4}, shorten <=20pt, shorten >=20pt, Rightarrow, from=2-4, to=1-5]
            \arrow[""{name=5, anchor=center, inner sep=0}, "{{p_y}}"', from=2-4, to=2-5]
            \arrow[""{name=6, anchor=center, inner sep=0}, "{{p_x}}"', from=3-1, to=3-2]
            \arrow[""{name=7, anchor=center, inner sep=0}, "{{p'_x}}", bend left=50, from=3-1, to=3-2]
            \arrow[""{name=8, anchor=center, inner sep=0}, "g", from=3-1, to=4-1]
            \arrow[""{name=9, anchor=center, inner sep=0}, "f"', bend right=50, from=3-1, to=4-1]
            \arrow["{{g'}}", from=3-2, to=4-2]
            \arrow[""{name=10, anchor=center, inner sep=0}, "{{p'_x}}", from=3-7, to=3-8]
            \arrow["f"', from=3-7, to=4-7]
            \arrow[""{name=11, anchor=center, inner sep=0}, "{{f'}}"', from=3-8, to=4-8]
            \arrow[""{name=12, anchor=center, inner sep=0}, "{g'}", bend left=50, from=3-8, to=4-8]
            \arrow["{{p_g}}"', shorten <=20pt, shorten >=20pt, Rightarrow, from=4-1, to=3-2]
            \arrow[""{name=13, anchor=center, inner sep=0}, "{{p_y}}"', from=4-1, to=4-2]
            \arrow["{{p'_f}}", shorten <=20pt, shorten >=20pt, Rightarrow, from=4-7, to=3-8]
            \arrow[""{name=14, anchor=center, inner sep=0}, "{{p'_y}}", from=4-7, to=4-8]
            \arrow[""{name=15, anchor=center, inner sep=0}, "{{p_y}}"', bend right=50, from=4-7, to=4-8]
            \arrow[""{name=16, anchor=center, inner sep=0}, "{{p'_x}}", from=5-4, to=5-5]
            \arrow[""{name=17, anchor=center, inner sep=0}, "g", from=5-4, to=6-4]
            \arrow[""{name=18, anchor=center, inner sep=0}, "f"', bend right=50, from=5-4, to=6-4]
            \arrow[""{name=19, anchor=center, inner sep=0}, "{{g'}}", from=5-5, to=6-5]
            \arrow["{{p'_g}}"'{pos=0.7}, shorten <=20pt, shorten >=20pt, Rightarrow, from=6-4, to=5-5]
            \arrow[""{name=20, anchor=center, inner sep=0}, "{{p'_y}}", from=6-4, to=6-5]
            \arrow[""{name=21, anchor=center, inner sep=0}, "{{p_y}}"', bend right=50, from=6-4, to=6-5]
            \arrow["{{m_x}}", Rightarrow, from=0, to=1]
            \arrow["{\alpha'}", shorten <=2pt, shorten >=2pt, Rightarrow, from=3, to=4]
            \arrow["{{m_f\ast \left(1_{1_{p'_x}\ast\alpha'}\right)}}", shorten <=60pt, shorten >=50pt, Rightarrow, scaling nfold=3, from=3, to=10]
            \arrow["{{m_\alpha}}", shorten <=50pt, shorten >=50pt, Rightarrow, scaling nfold=4, from=5, to=16]
            \arrow["{p_\alpha \ast(1_{m_x \ast 1_{g'}})}", shorten <=60pt, shorten >=50pt, Rightarrow, scaling nfold=3, from=6, to=2]
            \arrow["{m_x}", shorten <=1pt, shorten >=1pt, Rightarrow, from=6, to=7]
            \arrow["\alpha", shorten <=2pt, shorten >=2pt, Rightarrow, from=9, to=8]
            \arrow["{\alpha'}", shorten <=2pt, shorten >=2pt, Rightarrow, from=11, to=12]
            \arrow["{{\left(1_{\alpha \ast 1_{p_y}}\right) \ast m_g}}"', shorten <=60pt, shorten >=50pt, Rightarrow, scaling nfold=3, from=13, to=17]
            \arrow["{{m_y}}", Rightarrow, from=15, to=14]
            \arrow["\alpha", shorten <=2pt, shorten >=2pt, Rightarrow, from=18, to=17]
            \arrow["{{1_{1_f \ast m_y} \ast p'_\alpha}}"', shorten <=50pt, shorten >=50pt, Rightarrow, scaling nfold=3, from=19, to=15]
            \arrow["{{m_y}}", Rightarrow, from=21, to=20]
        \end{tikzcd}
        }
    \end{center}
\end{example}

We now formalize this procedure in terms of type-theoretic rules producing a context of the form as in Examples~\ref{ex:NatTrafoContext} and~\ref{ex:ModiContext}. 

To keep track of the structure of these contexts we will use a semicolon instead of a comma to separate the constituent parts. The variables in between will be separated by commas as usual. With this convention, the context of Example~\ref{ex:NatTrafoContext} will be denoted by $ \Gamma; \Gamma'; P;P';M$. 

\paragraph{Rules for transfors as contexts.}
For each $ n \in \bN^> $ we introduce two new auxiliary judgment $ M_1;\dots; M_{2n+1} \ \textup{\texttt{gray}} \ \Gamma $ and $ M_1; \dots ; M_{2n+1} \ \textup{\texttt{pgray}} \ M_1 $
subject to the rules
    \bigskip
    \begin{center}
        \begin{prooftree}
            \infer0{\emptyset;\dots; \emptyset \ \textup{\texttt{gray}} \ \emptyset}
        \end{prooftree}
    \end{center}
    \bigskip
    \begin{center}
        \begin{prooftree}
            \hypo{M_1; \dots; M_{2n+1} \ \textup{\texttt{gray}} \ M_1}
            \hypo{M_1 \vdash X}
            \infer2{M_1, x :X; \dots; M_{2n+1} \ \textup{\texttt{pgray}} \ M_1,x:X}
        \end{prooftree}
    \end{center}
    \bigskip
    \begin{center}
        \adjustbox{scale=0.82}{
        \begin{prooftree}
            \hypo{&M_1; \dots ; M_{2i-1}, x : X; \, M_{2i}; \dots ; M_{2n+1} \ \textup{\texttt{pgray}} \ M_1}
            \infer[no rule]1{&M_1, \dots, M_{2i-1}, x : X, \, M_{2i}, x' : X, \, M_{2i+1} \vdash s \to_A t}
            \infer1{&M_1; \dots ; M_{2i-1}, x : X; \, M_{2i}, x' : X; \, M_{2i+1}, p_x : s \to_A t; \, M_{2i+2}; \dots; M_{2n+1} \ \textup{\texttt{pgray}} \ M_1}
        \end{prooftree}
        \ \footnotesize \begin{tabular}{c}
            $ 1 \le i \le n $ \\[1mm]
            $ |M_{2i-1}|=|M_{2i}| $ \\[1mm]
            $ \displaystyle{\tau\textup{Cond}(s : A, x : X, M_{2i+1} ) }  $ \\[1mm]
            $ \displaystyle{\sigma\textup{Cond}(t : A, x' : X, M_{2i+1}) }  $
        \end{tabular}
        } \normalsize
    \end{center}
    \bigskip
    \begin{center}
        \begin{prooftree}
            \hypo{M_1; \dots; M_{2n+1} \ \textup{\texttt{pgray}} \ M_1}
            \infer1{M_1; \dots; M_{2n+1} \ \textup{\texttt{gray}} \ M_1}
        \end{prooftree}
        \qquad \footnotesize $ |M_1| = |M_{2n+1}| $ \normalsize
    \end{center}
    \bigskip

\begin{lemma} \label{lem:AdmRulesTransfors}
    The rules
    \begin{center}
        \begin{prooftree}
            \hypo{M_1;\dots;M_{2n+1} \ \textup{\texttt{gray}} \ M_1}
            \infer1{M_1,\dots,M_{2n+1} \vdash}
        \end{prooftree}
        \qquad\qquad
        \begin{prooftree}
            \hypo{M_1;\dots;M_{2n+1} \ \textup{\texttt{pgray}} \ M_1}
            \infer1{M_1,\dots,M_{2n+1} \vdash}
        \end{prooftree}
    \end{center}
    are admissible.
\end{lemma}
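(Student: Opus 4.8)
The plan is to prove both admissibility statements simultaneously by a single mutual induction on the derivation of the $\textup{\texttt{gray}}$/$\textup{\texttt{pgray}}$ judgment, in the spirit of Lemma~\ref{lem:ConeAdmiss}. Throughout, write $M_1,\dots,M_{2n+1}$ for the context obtained by concatenating the blocks (i.e. by replacing the separating semicolons of the auxiliary judgments by commas). The assertion proved by induction is: if $M_1;\dots;M_{2n+1}\ \textup{\texttt{gray}}\ M_1$ or $M_1;\dots;M_{2n+1}\ \textup{\texttt{pgray}}\ M_1$ is derivable, then $M_1,\dots,M_{2n+1}\vdash$ is derivable.

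Before starting I would record two structural facts about \CaTT{}. First, \emph{presupposition}: if $\Xi\vdash A$ is derivable (and $s\to_A t$, $X$ are such types) then $\Xi\vdash$ is derivable; this is standard and provable by induction on derivations, the substitution judgments bottoming out at (ES) which carries $\Xi\vdash$ as an explicit premise. Second, \emph{weakening}: if $\Delta_1,\Delta_2\vdash$ and $\Delta_1\vdash A$ are derivable and $x$ does not occur in $\Delta_1,\Delta_2$, then $\Delta_1,x:A,\Delta_2\vdash$ is derivable; this is the standard admissibility of weakening for \CaTT{}~\cite{finster2017type}. Every variable appended by the $\textup{\texttt{gray}}$/$\textup{\texttt{pgray}}$ rules is fresh by the usual variable convention, and (granting the inductive hypothesis) fresh even for the blocks to its right, so weakening always applies with nothing to check beyond well-formedness of the inserted variable's type, which the premises at hand supply.

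Granting these, the induction is routine. The axiom $\emptyset;\dots;\emptyset\ \textup{\texttt{gray}}\ \emptyset$ gives $\emptyset\vdash$ by (EC). For the rule concluding $M_1,x:X;M_2;\dots;M_{2n+1}\ \textup{\texttt{pgray}}\ M_1,x:X$: by the induction hypothesis $M_1,M_2,\dots,M_{2n+1}\vdash$, and one application of weakening — inserting $x:X$ immediately after $M_1$, using the premise $M_1\vdash X$ — produces the concatenation of the conclusion. For the rule concluding $M_1;\dots;M_{2i-1},x:X;M_{2i},x':X;M_{2i+1},p_x:s\to_A t;M_{2i+2};\dots;M_{2n+1}\ \textup{\texttt{pgray}}\ M_1$: the induction hypothesis on the premise $\textup{\texttt{pgray}}$ judgment gives $M_1,\dots,M_{2i-1},x:X,M_{2i},M_{2i+1},\dots,M_{2n+1}\vdash$; from the typing premise $M_1,\dots,M_{2i-1},x:X,M_{2i},x':X,M_{2i+1}\vdash s\to_A t$, presupposition followed by repeated inversions of (CE) yields $M_1,\dots,M_{2i-1},x:X,M_{2i}\vdash X$; two applications of weakening then thread the fresh variables $x'$ (between the blocks $M_{2i}$ and $M_{2i+1}$) and $p_x$ (after $M_{2i+1}$, whose type $s\to_A t$ is well formed there by the typing premise itself) into the context, producing exactly the concatenation of the conclusion. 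The side conditions $\tau\textup{Cond}$, $\sigma\textup{Cond}$ and the length equalities $|M_{2i-1}|=|M_{2i}|$, $|M_1|=|M_{2n+1}|$ play no role: they constrain which cells may be added, not whether the resulting list is a valid context. Finally, the rule passing from a $\textup{\texttt{pgray}}$ judgment to the corresponding $\textup{\texttt{gray}}$ judgment leaves the concatenated context unchanged, so there is nothing to prove.

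The substance of the argument is entirely bookkeeping — tracking where the block index $i$ (with $1\le i\le n$) places the new variables inside the concatenation, and applying the two weakenings in the correct order — so I do not anticipate a genuine obstacle. The one point deserving care, and the only place a skeptical reader might balk, is the appeal to weakening in the form above, namely insertion of a fresh variable in the \emph{middle} of an already-formed context; this is admissible in \CaTT{}, but for a self-contained treatment one can instead replay the (CE) steps of the derivation of $M_1,\dots,M_{2n+1}\vdash$ furnished by the induction hypothesis, inserting the new (CE) applications at the appropriate positions — which is precisely how weakening is established in the first place.
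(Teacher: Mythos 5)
Your proof is correct and follows the same route as the paper, which simply proves the lemma by induction on the derivation of the \texttt{gray}/\texttt{pgray} judgment; your version just spells out the bookkeeping (presupposition, inversion of (CE), and weakening in the middle of a context) that the paper leaves implicit. No gaps.
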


\begin{proof}
    By induction.
\end{proof}

If $ M_1;\dots ; M_{2n+1} \ \texttt{gray} \ M_1 $ is derivable, then we say the context $ M_1, \dots , M_{2n+1} $ is the Gray tensor product of $ M_1$ with an $n$-globe. We interpret these above rules as a recognition algorithm for those context which have the shape of a transfor over the given context.

As for cones we can formulate an existence proof for the Gray tensor product with $1$-globes. The pattern that emerges simply generalizes that of cones.

\begin{theorem}
    Let $ \Gamma $ be a globular context. There exists a context
    \begin{align*}
        T(\Gamma) \equiv \Gamma, \Gamma', M
    \end{align*}
    such that $ \Gamma;\Gamma';M \textup{\texttt{ gray }} \Gamma $ is derivable. If $ \Gamma \equiv (x_i : A_i)_{1\le i \le n} $, then $ \Gamma' $ is of the form $ (x_i' : A_i)_{1 \le i \le n} $ and $ \Pi $ of the form $ (p_{x_i} : T_{x_i} )_{1 \le i \le n} $. Moreover, the type $ T_x $ of the projection $ p_x $ of a variable $x$ of dimension $ d $ can be taken to be
    \begin{align*}
        &x \to x' && \text{if } d = 0 \\
        &\sigma(p_x) \to \tau(p_x) && \text{if } d > 0
    \end{align*}
    where
    \begin{align*}
        \sigma(p_x) = \Big( \Big( \Big( x \mast{d}{0} 1^{d-1}_{p_{\tau^{d}(x)}} \Big) \cdots \mast{d}{d - 3} 1^2_{p_{\tau^3(x)}} \Big) \mast{d}{d-2} 1^1_{p_{\tau^2(x)}} \Big) \mast{d}{d-1} p_{\tau(x)} \\[2mm]
        \tau(p_x) = p_{\sigma(x)} \mast{d}{d-1} \Big(1_{p_{\sigma^2(x)}}^{1} \mast{d}{{d}-2} \Big( 1^{2}_{p_{\sigma^{3}(x)}} \mast{d}{d-3} \dots \Big(1_{p_{\sigma^{d}(x)}}^{d - 1} \mast{d}{0} x'_i \Big) \Big) \Big).
    \end{align*}
\end{theorem}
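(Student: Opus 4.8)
The plan is to follow the proof of Theorem~\ref{prop:ConePsContext} almost line for line, running an induction on the length of $\Gamma$ and carrying the free-variable and linearity data along as an inductive invariant (the analogue of equation~\ref{eq:FVproj}, now recorded for both ends of each projection). For $\Gamma \equiv \emptyset$ the first rule gives $\emptyset;\emptyset;\emptyset \ \textup{\texttt{gray}} \ \emptyset$. For the inductive step, suppose $\Gamma;\Gamma';M \ \textup{\texttt{gray}} \ \Gamma$ is derivable with $\Gamma'$ a verbatim primed copy of $\Gamma$ and $M \equiv (p_{x_i} : T_{x_i})_i$ as in the statement, and extend $\Gamma$ by one more variable $x : A$. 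Since $\Gamma, x : A \vdash$ we have $\Gamma \vdash A$, so the second rule applies and produces $\Gamma, x : A; \Gamma'; M \ \textup{\texttt{pgray}} \ \Gamma, x : A$, using the invariant $|\Gamma| = |\Gamma'|$ for the length side condition. It then remains to exhibit, over the context $\Gamma, x : A, \Gamma', x' : A', M$ (the duplicate $x' : A'$ being derivable because $\Gamma'$ is a derivable primed copy of $\Gamma$), the type $\sigma(p_x) \to \tau(p_x)$ displayed in the statement and to check that $\tau\textup{Cond}(\sigma(p_x), x : A, M)$ and $\sigma\textup{Cond}(\tau(p_x), x' : A', M)$ hold; the third rule with $i = 1$ then appends $x' : A'$ and $p_x : \sigma(p_x) \to \tau(p_x)$, and the fourth rule closes the derivation, its side condition holding since the first and last blocks now both have length $|\Gamma| + 1$. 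When $\dim(x) = 0$ we take $p_x : x \to x'$, and the two side conditions collapse to $x \propto x$, $x' \propto x'$, $\FV(x) = \{x\}$, $\FV(x') = \{x'\}$, exactly as in the $d = 0$ case of Theorem~\ref{prop:ConePsContext}.

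The substance lies in the case $d := \dim(x) > 0$, where both ends of the type of $p_x$ are iterated composites and must be built by recursion. For the target end we reuse the auxiliary family of the proof of Theorem~\ref{prop:ConePsContext}, with the base point $x$ replaced by its copy $x'$: set $t^{d+1}(x) :\equiv x'$ and $t^{n}(x) :\equiv 1^{n-1}_{p_{\sigma^{n}(x)}} \mast{d}{d-n} t^{n+1}(x)$ for $1 \le n \le d$, so that $t^1(x)$ is precisely the prescribed $\tau(p_x)$. For the source end we use the dual family, built from the $\tau$-faces with the units attached on the other side: set $s^{d+1}(x) :\equiv x$ and $s^{n}(x) :\equiv s^{n+1}(x) \mast{d}{d-n} 1^{n-1}_{p_{\tau^{n}(x)}}$ for $1 \le n \le d$, so that $s^1(x)$ is precisely the prescribed $\sigma(p_x)$. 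A nested downward induction on $n$ from $d+1$ to $1$, of the same shape as the one in Theorem~\ref{prop:ConePsContext}, establishes for the $t$-family the analogues of items~\ref{enu:item1}--\ref{enu:item5} there: derivability of $t^n(x)$, the identities $\sigma^m(t^n(x)) \equiv t^{n-m}(\sigma^m(x))$ and $\tau(t^n(x)) \equiv t^{n-1}(\tau(x))$, the free-variable formula $\FV(t^n(x) : T) = \FV(x' : A') \cup \FV(p_{\sigma^n(x)} : T_{\sigma^n(x)})$ for $n \le d$, and linearity $x' \propto t^n(x)$; and it establishes the mirror statements for the $s$-family, namely derivability, $\tau^m(s^n(x)) \equiv s^{n-m}(\tau^m(x))$, $\sigma(s^n(x)) \equiv s^{n-1}(\sigma(x))$, $\FV(s^n(x) : T) = \FV(x : A) \cup \FV(p_{\tau^n(x)} : T_{\tau^n(x)})$ for $n \le d$, and $x \propto s^n(x)$. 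The dimension counting uses Lemma~\ref{lem:FreeVariablesBoundTypeDim}, and one repeatedly invokes the globular identities ($\sigma\sigma = \tau\sigma$, $\sigma\tau = \tau\tau$) together with the fact that priming commutes with $\sigma$ and $\tau$.

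Feeding these properties into the source and target computations for $s^1(x)$ and $t^1(x)$ yields, after the same cancellations as in Theorem~\ref{prop:ConePsContext}, that $\sigma(s^1(x)) \equiv \sigma(t^1(x))$ and $\tau(s^1(x)) \equiv \tau(t^1(x))$, so that $s^1(x) \parallel t^1(x)$ and the type $p_x : s^1(x) \to t^1(x)$ is well formed (and equal to the one in the statement). The linearity clauses of the two side conditions are exactly $x \propto s^1(x)$ and $x' \propto t^1(x)$, already in hand; the free-variable clauses follow by combining $\FV(s^1(x) : T) = \FV(x : A) \cup \FV(p_{\tau(x)} : T_{\tau(x)})$ and $\FV(t^1(x) : T) = \FV(x' : A') \cup \FV(p_{\sigma(x)} : T_{\sigma(x)})$ with the inductive description of $\FV(p_{\tau(x)} : T_{\tau(x)})$ and $\FV(p_{\sigma(x)} : T_{\sigma(x)})$, exactly as in the closing paragraph of the proof of Theorem~\ref{prop:ConePsContext}. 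The same computation shows the invariant is preserved for the new projection $p_x$ and, since $x$ is appended last and by Lemma~\ref{lem:FreeVariablesBoundTypeDim} is the unique variable of its dimension, for the previously constructed projections as well. This completes the induction.

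The hard part will be the source-side family $s^n(x)$ together with the parallelism check $s^1(x) \parallel t^1(x)$: in Theorem~\ref{prop:ConePsContext} the source of a projection was the single variable $p_{\tau(x)}$, so only one nontrivial composite had to be analysed and matched against variables, whereas here both ends of the type of $p_x$ are genuine iterated composites produced by independent recursions that attach units on opposite sides, and their boundaries must be shown to agree before $p_x$ can be typed at all. Everything else --- derivability, the face identities, the free-variable formulas, the side conditions, and the preservation of the invariant --- is then a mechanical transcription of the cone argument, with $\sigma \leftrightarrow \tau$ and ``unit on the left'' $\leftrightarrow$ ``unit on the right'' interchanged between the two families.
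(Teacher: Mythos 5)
Your proposal is correct and takes the same route as the paper, whose entire proof here is the single sentence that the argument is ``in complete analogy'' to Theorem~\ref{prop:ConePsContext}; you carry out exactly that analogy, reusing the $t^n$ family for the target and introducing the mirrored $s^n$ family for the source. You in fact supply more detail than the paper does, and correctly isolate the one genuinely new ingredient the analogy requires --- that both boundaries of $p_x$ are now iterated composites, so a second recursion and a parallelism check $s^1(x) \parallel t^1(x)$ are needed where the cone case only had to compare a composite against the variable $p_{\tau(x)}$.
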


\begin{proof}
    This can be proven in complete analogy to Proposition~\ref{prop:ConePsContext}.
\end{proof}

\begin{remark}
    A similar formula has been derived using Steiner complexes by Ara and Maltsiniotis~\cite{ara2020joint} (see Appendix B) in the context of strict $\infty$-categories.
\end{remark}

\subsection{Higher transfors between cones} \label{subsec:HiTrsfCones}

We can repeat everything done in the previous section, and apply it to cones. The rules essentially have the same form, the only difference being that the context begins with a set of variables which define a cone. 

\paragraph{Rules for higher transfors between cones.} We introduce two new auxiliary judgments $ \Gamma; c: \Ob; M_1;\dots;M_{2n+1} \ \textup{\texttt{ctrf}} \ (\Gamma;c) $ and $ \Gamma; c:\Ob, M_1;\dots;M_{2n+1} \ \textup{\texttt{pctrf}} \ (\Gamma;c) $, where $ n \in \bN^> $ is a strictly positive number, subject to the rules
    \bigskip
    \begin{center}
        \begin{prooftree}
            \infer0{\emptyset;c:\Ob; \emptyset;\dots; \emptyset \ \textup{\texttt{ctrf}} \ (\emptyset,c)}
        \end{prooftree}
    \end{center}
    \bigskip
    \begin{center}
        \begin{prooftree}
            \hypo{\Gamma; c : \Ob; M_1; \dots; M_{2n+1} \ \textup{\texttt{ctrf}} \ (\Gamma;c)}
            \hypo{\Gamma, x : X, c : \Ob, M_1, p : T \ \textup{\texttt{cone}} \ ((\Gamma,x :X),c)}
            \infer2{\Gamma,x :X; c : \Ob; M_1, p : T; M_2; \dots; M_{2n+1} \ \textup{\texttt{pctrf}} \ ((\Gamma,x:X),c)}
        \end{prooftree}
    \end{center}
    \bigskip
    \begin{center}
        \adjustbox{scale=0.8}{
        \begin{prooftree}
            \hypo{&\Gamma; c : \Ob; M_1; \dots ; M_{2i-1}, x : X; M_{2i}; \dots ; M_{2n+1} \ \textup{\texttt{pctrf}} \ (\Gamma;c)}
            \infer[no rule]1{&\Gamma; c : \Ob; M_1; \dots; M_{2i-1}, x : X; M_{2i}, x' : X; M_{2i+1} \vdash s \to_A t}
            \infer1{&\Gamma; c :\Ob; M_1; \dots ; M_{2i-1}, x : X; M_{2i}, x' : X; M_{2i+1}, p_x : s \to_A t; \dots ; M_{2n+1} \ \textup{\texttt{pctrf}} \ (\Gamma;c)}
        \end{prooftree}
        \ \footnotesize \begin{tabular}{c}
            $ 1 \le i \le n $ \\[1mm]
            $ |M_{2i-1}|=|M_{2i}| $ \\[1mm]
            $ \displaystyle{\tau\textup{Cond}\Big(s : A, x : X, \FV(M_{2i+1}) \Big) }  $ \\[1mm]
            $ \displaystyle{\sigma\textup{Cond}\Big(t : A, x' : X, \FV(M_{2i+1})\Big) }  $
        \end{tabular}
        } \normalsize
    \end{center}
    \bigskip
    \begin{center}
        \begin{prooftree}
            \hypo{\Gamma; c : \Ob; M_1; \dots; M_{2n+1} \ \textup{\texttt{pctrf}} \ (\Gamma;c)}
            \infer1{\Gamma; c : \Ob; M_1; \dots; M_{2n+1} \ \textup{\texttt{ctrf}} \ (\Gamma;c)}
        \end{prooftree}
        \qquad \footnotesize $ |M_1| = |M_{2n+1}| $ \normalsize
    \end{center}
    \bigskip

If the judgment $ \Gamma;c : \Ob; M_1;\dots;M_{2n+1} \ \texttt{ctrf} \ (\Gamma;c) $ is derivable we say that $ \Gamma;c : \Ob; M_1;\dots;M_{2n+1} $ is a conical $(n+1)$-transfor over $ \Gamma $. A conical $2$-transfor over $\Gamma$ is a modification of cones over $\Gamma$.

The lists of variables generated by the above rules are contexts, as the following lemma affirms.

\begin{lemma} \label{lem:AdmRulesConeTransfors}
    The rule
    \begin{center}
        \begin{prooftree}
            \hypo{\Gamma; c : \Ob; M_1; \dots; M_{2n+1} \ \textup{\texttt{ctrf}} \ (\Gamma;c)}
            \infer1{\Gamma, c : \Ob, M_1, \dots, M_{2n+1} \vdash }
        \end{prooftree}
    \end{center}
    is admissible.
\end{lemma}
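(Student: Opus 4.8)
The plan is to prove the statement by induction on the derivation of the premise, strengthening it along the way to the corresponding assertion for the $\texttt{pctrf}$ judgment, since the two judgments are mutually recursive: I would show that whenever $\Gamma; c:\Ob; M_1;\dots;M_{2n+1} \ \texttt{ctrf} \ (\Gamma;c)$ or $\Gamma; c:\Ob; M_1;\dots;M_{2n+1} \ \texttt{pctrf} \ (\Gamma;c)$ is derivable, the list $\Gamma, c:\Ob, M_1, \dots, M_{2n+1}$ is a well-formed context. This follows the pattern of Lemma~\ref{lem:ConeAdmiss} and Lemma~\ref{lem:AdmRulesTransfors}; the one genuinely new ingredient is that a cone-formation judgment appears as the second premise of the rule passing from $\texttt{ctrf}$ to $\texttt{pctrf}$, so Lemma~\ref{lem:ConeAdmiss} gets invoked there.

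There are four rules to check. For the base rule the associated context is just $c : \Ob$, which is well formed by $(\textrm{EC})$, $(\textrm{Ob})$ and $(\textrm{CE})$. For the $\texttt{ctrf}$-to-$\texttt{pctrf}$ rule, the induction hypothesis applied to the first premise gives $\Gamma, c:\Ob, M_1, \dots, M_{2n+1} \vdash$, while Lemma~\ref{lem:ConeAdmiss} applied to the second premise gives $\Gamma, x:X, c:\Ob, M_1, p:T \vdash$; inverting $(\textrm{CE})$ repeatedly on the latter extracts in particular $\Gamma \vdash X$ and $\Gamma, x:X, c:\Ob, M_1 \vdash T$. It then only remains to splice the fresh variables $x:X$ (after $\Gamma$) and $p:T$ (after $M_1$) into the already well-formed context $\Gamma, c:\Ob, M_1, \dots, M_{2n+1}$, which is an instance of weakening. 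The $\texttt{pctrf}$-extension rule is handled the same way: the induction hypothesis dispatches the $\texttt{pctrf}$ premise; the type judgment $\Gamma; c:\Ob; M_1;\dots;M_{2i-1}, x:X; M_{2i}, x':X; M_{2i+1} \vdash s \to_A t$ presupposes well-formedness of its own context, hence both derivability of $X$ over the segment preceding $x'$ and well-formedness of the prefix ending in $M_{2i+1}$ with $x'$ already inserted; one appends $p_x : s \to_A t$ to that prefix by $(\textrm{CE})$ and then splices $x':X$ and $p_x$ into the remaining blocks $M_{2i+2}, \dots, M_{2n+1}$ by weakening. Finally, the $\texttt{pctrf}$-to-$\texttt{ctrf}$ rule leaves the associated context unchanged, so the induction hypothesis transfers verbatim. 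The side conditions ($|M_{2i-1}| = |M_{2i}|$, $|M_1| = |M_{2n+1}|$, and the $\sigma\textrm{Cond}/\tau\textrm{Cond}$ constraints) only constrain lengths, free variables and linearity, and play no role in the well-formedness argument.

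The one step that is not pure bookkeeping is the use of weakening: one needs that inserting a fresh variable, whose type is derivable over the part of a well-formed \CaTT{} context preceding the insertion point, again yields a well-formed context, and that this leaves all subsequent types valid (the new variable does not occur in them). This is a standard structural property of \CaTT{} — provable by a routine simultaneous induction on the derivations of $\vdash A$, $\vdash t:A$ and $\vdash \gamma : \Gamma$ — and I would invoke it directly rather than reprove it here. Alternatively, one could carry through the induction an explicit description of the shape of the context being assembled, in the spirit of the second clause of Lemma~\ref{lem:ConeAdmiss}, which makes the individual weakening instances completely transparent. Modulo this, the argument is a short case analysis exactly parallel to the proofs of Lemma~\ref{lem:ConeAdmiss} and Lemma~\ref{lem:AdmRulesTransfors}.
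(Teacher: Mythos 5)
Your proposal is correct and follows exactly the route the paper intends: the paper's own proof is just ``by induction,'' and your elaboration --- strengthening to the mutually recursive \texttt{pctrf} judgment, invoking Lemma~\ref{lem:ConeAdmiss} for the cone premise, and discharging the insertions of fresh variables by standard weakening --- is the natural way to fill in that induction. Nothing further is needed.
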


\begin{proof}
    By induction.
\end{proof}

\begin{lemma} \label{lem:ConeGrayStructure}
    Given a derivable judgment $ \Gamma; c : \Ob; M_1; \dots; M_{2n+1} \ \textup{\texttt{ctrf}} \ (\Gamma, c) $ we have $ |\Gamma| = |M_i| $ for all $ 1 \le i \le 2n +1 $. Moreover $ K :\equiv \Gamma, c : \Ob, M_1 $ is a cone over $ \Gamma $ with apex $ c $.
\end{lemma}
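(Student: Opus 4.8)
The plan is to prove both assertions by a single simultaneous induction on the derivation, treating the $\textup{\texttt{ctrf}}$-- and $\textup{\texttt{pctrf}}$--judgments together. For $\textup{\texttt{ctrf}}$ the induction hypothesis is exactly the statement of the lemma; for $\textup{\texttt{pctrf}}$ it has to be strengthened, since the lists $M_i$ built along the way are only ``half-updated''. The invariant I would carry for a derivable $ \Gamma; c : \Ob; M_1; \dots; M_{2n+1} \ \textup{\texttt{pctrf}} \ (\Gamma;c) $ is: (a) $ |M_1| = |\Gamma| $ and $ \Gamma, c : \Ob, M_1 \ \cone \ (\Gamma;c) $ is derivable; and (b) there is an \emph{odd} index $ j $ with $ 1 \le j \le 2n+1 $ such that $ |M_i| = |\Gamma| $ for $ i \le j $ and $ |M_i| = |\Gamma| - 1 $ for $ j < i \le 2n+1 $. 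Here $ j $ records how far the ``duplicate-and-relate'' sweep performed by the second $\textup{\texttt{pctrf}}$ rule has advanced since the most recent use of the cone-extension rule.

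First I would dispatch the base case: for $ \emptyset; c : \Ob; \emptyset; \dots; \emptyset \ \textup{\texttt{ctrf}} \ (\emptyset, c) $ every component is empty, so all lengths are $ 0 $, and $ K \equiv (c : \Ob) $ is a cone over $ \emptyset $ by rule $ \textup{(EK)} $. For the cone-extension rule (the first $\textup{\texttt{pctrf}}$ rule), its $\textup{\texttt{ctrf}}$ premise over $ \Gamma $ supplies $ |M_i| = |\Gamma| $ and a cone over $ \Gamma $ by induction, while its second premise is a judgment $ \Gamma, x : X, c : \Ob, M_1, p : T \ \cone \ ((\Gamma, x : X);c) $; by Lemma~\ref{lem:ConeAdmiss} this cone has shape $ (\Gamma, x : X), c : \Ob, \Pi' $ with $ |\Pi'| = |\Gamma, x : X| $ and $ \Pi' \equiv M_1, p : T $, so the new first component has length $ |\Gamma| + 1 = |\Gamma, x : X| $, giving (a), while the other $ M_i $ are untouched and so have length one below $ |\Gamma, x : X| $, giving (b) with $ j = 1 $. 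For the second $\textup{\texttt{pctrf}}$ rule with parameter $ i $, writing the position-$(2i-1)$ list of the premise as $ M_{2i-1}, x : X $, the side condition $ |M_{2i-1}| = |M_{2i}| $ says this list is one longer than the position-$(2i)$ list; together with (b) for the premise this forces $ j = 2i - 1 $, and since the conclusion lengthens the position-$(2i)$ and position-$(2i+1)$ lists by one each — bringing both to length $ |\Gamma| $ — invariant (b) holds again with the new odd index $ 2i + 1 $, while the first component is left unchanged so (a) persists.

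Finally, the only rule producing a $\textup{\texttt{ctrf}}$-judgment apart from the base case has a $\textup{\texttt{pctrf}}$ premise and side condition $ |M_1| = |M_{2n+1}| $. By invariant (b) applied to the premise, $ |M_1| = |\Gamma| $ whereas $ |M_{2n+1}| $ equals $ |\Gamma| $ if $ j = 2n+1 $ and $ |\Gamma| - 1 $ otherwise; the side condition thus forces $ j = 2n+1 $, i.e.\ $ |M_i| = |\Gamma| $ for every $ i $, which is the first claim, and invariant (a) is exactly the assertion that $ K \equiv \Gamma, c : \Ob, M_1 $ is a cone over $ \Gamma $ with apex $ c $. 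The main obstacle is getting the $\textup{\texttt{pctrf}}$ invariant right — in particular verifying that the length side condition on the duplication rule keeps the sweep index $ j $ odd and advances it by exactly two at each step; once that bookkeeping is settled, the two $\textup{\texttt{ctrf}}$ claims fall out with no further computation, the rest of the induction following the same pattern as the preceding admissibility lemmas for these rules.
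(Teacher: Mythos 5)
Your proof is correct: the paper states this lemma without proof (its neighbours are all dispatched with ``By induction''), and your induction on the derivation is exactly the intended argument, with the strengthened invariant for the $\textup{\texttt{pctrf}}$ judgment --- tracking the odd sweep index $j$ and checking that the side conditions $|M_{2i-1}|=|M_{2i}|$ and $|M_1|=|M_{2n+1}|$ force $j=2i-1$ before each duplication step and $j=2n+1$ at closure --- being precisely the bookkeeping needed to make it go through.
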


In some cases, instead of the full $ \Gamma; c : \Ob; M_1; \dots; M_{2n+1} \ \texttt{ctrf} \ (\Gamma;c)$, we will also write $ M \ \texttt{ctrf} \ (\Gamma;c) $. In such situation we will write $ n_M \in \bN^> $ instead of $ n $, to make clear which natural number we are referring to.

\subsection{Composing with cones} \label{subsec:Whiskering}

Let $ \Gamma $ be a diagram and assume we are given a universal cone $ \Gamma \vdash \texttt{ucone} : K $ with apex $\lim_\gamma : \Ob$. Given a morphism of type $ c \to \limit_\Gamma $, where $c:\Ob$ is some variable, we can compose this with $ \texttt{ucone} $ and form a cone with apex $ c $. More generally, given any cell in the $(\infty,\infty)$-category of morphisms given by the type $ c \to \limit_\Gamma $, by composition we get a term in the $(\infty,\infty)$-category of cones over $ \Gamma $ with apex $ c $. Schematically there exists a functor of $ (\infty,\infty)$-categories
\begin{align*}
    \texttt{ucone}_* : \{ \text{terms of } c \to \limit_\Gamma \} \longrightarrow \{ \text{cones } K \text{ over } \Gamma \text{ with apex } c \}.
\end{align*}
The cone $ \texttt{ucone} $ is a limiting cone if this functor is an equivalence.

Spelling out this universal property requires us to be able to refer to the image of this functor. We encode this again by a context morphism.

Let $ \Gamma $ be a context and let $ \Gamma \vdash \texttt{ucone} : K $ be a context morphism encoding a universal cone obtained from the rules in Subsection~\ref{subsec:UniCone}. The cone will be of the form $ K \equiv \Gamma, c : \Ob, \Pi $ where $ \Pi $ contains all the projection variables. The context morphism $\texttt{ucone}$ can then be split accordingly and we denote by $ \overline{\texttt{u}p} $ the list of terms in $ \texttt{ucone} $ corresponding to $ \Pi $. Now, given a context $ \Gamma, c : \Ob, f : c \to \limit_\Gamma $, applying $ \texttt{ucone}_* $ to $ f $ gives a context morphism schematically denoted by
\begin{align*}
    \Gamma, c : \Ob, f : c \to \limit_\Gamma \vdash \langle \id_\Gamma, c, f \ast \overline{\texttt{u}p} \rangle : K.
\end{align*}
Here $ f \ast \overline \up $ stands for a list of terms of the same length as $ \Gamma \equiv (x_i : A_i)_{1\le i\le n} $, the $i$-th term of which is given by composing $ \up_{x_i} $ with $ f $ appropriately. 

More generally, let $ M $ be a conical $n$-transfor over $ \Gamma $. If $ \Gamma, c : \Ob, D^n(c,\limit_\Gamma,\phi) $ is a context extending $ \Gamma $ where $ D^n(c,c',\phi) $ is a 
\begin{align*}
    D^n(c,\limit_\Gamma, \phi) :\equiv f : c \to \limit_\Gamma, \ g : c \to \limit_\Gamma, \alpha : f \to g, \beta : f \to g, \dots, \phi : A_\phi
\end{align*}
has the shape of an $n$-dimensional globe between $ \limit_\Gamma $ and some variable $ c : \Ob $ with maximal cell $ \phi $, we would like to obtain a context morphism of the form
\begin{align*}
    \Gamma, c : \Ob, D^n(c,\limit_\Gamma, \phi) \vdash \langle \id_\Gamma, c, f \ast \overline \up, \ g \ast \overline \up, \ \dots \ , \ \phi \ast \overline \up \rangle : M. 
\end{align*}
Up to a manageable dimension we can depict this pictorially as
\begin{center}
    \begin{tikzcd}
        {\limit_\Gamma} && c \\
        & x \\
        & y
        \arrow["{\up_x}"', from=1-1, to=2-2]
        \arrow[""{name=0, anchor=center, inner sep=0}, "{\up_y}"', bend right=30, from=1-1, to=3-2]
        \arrow[""{name=1, anchor=center, inner sep=0}, "f"', bend right=18, from=1-3, to=1-1]
        \arrow[""{name=2, anchor=center, inner sep=0}, "g", bend left=18, from=1-3, to=1-1]
        \arrow["f", from=2-2, to=3-2]
        \arrow["{\up_f}"', shorten <=6pt,  shorten >=3pt, Rightarrow, from=0, to=2-2]
        \arrow["\phi"', shorten <=3pt, shorten >=2pt, Rightarrow, from=1, to=2]
    \end{tikzcd}
\end{center}
In the above diagram, composing the universal cone with $f $ and $ g $ respectively gives two cones. Composing with $ \phi $ produces a modification between these two cones.

To obtain such a context morphism, we use the same trick as before and work with an arbitrary cone in the form of a context instead of the universal cone. This allows us to control which projections contribute in which terms by fixing the free variables. In a bit more detail, assume we are given a judgment $ K \texttt{ cone } \Gamma $, that is $ K $ is a cone over $ \Gamma $. It will necessarily be of the form $ K \equiv \Gamma, c : \Ob, \Pi $, where $ \Pi $ contains all the projection variables. Then, what we are looking for a context morphism which schematically is of the form
\begin{align*}
    K, c' : \Ob, D^n(c',c) \vdash \langle \id_\Gamma, c, f \ast \id_\Pi, g \ast \id_\Pi, \ \dots \ , \phi \ast \id_\Pi \rangle : M
\end{align*}
where $ \id_\Pi $ denotes the variables $ \FV(\Pi) $ as a sequence of terms with the induced order. In the next subsection we give a set of rules which produce such context morphisms.

\subsection{Composing with cones and the universal property} \label{subsec:WhiskUP}

Let $ \Gamma \vdash \kappa : K $ be a universal cone over a diagram $ \Gamma $. As explained in subsection~\ref{subsec:Whiskering}, the idea is to build into \texttt{CaTT} the required terms to make the functor
\begin{align*}
    \texttt{ucone}_* : \{ \text{terms of } c \to \limit_\Gamma \} \longrightarrow \{ \text{cones } K \text{ over } \Gamma \text{ with apex } c \}
\end{align*}
into an equivalence of $ (\infty,\infty)$-categories. Switching momentarily to non type-theoretic notation, a functor $ F : \mathcal C \to \mathcal D $ of $(\infty,\infty)$-categories is said to be an equivalence if the maps
\begin{enumerate}[label=(\roman*)]
    \item $ F : \mcC_0 \to \mcD_0 $;
    \item $ F_{x,y} : \Hom_\mcC(x,y) \to \Hom_\mcD(Fx,Fy) $ $ x,y \in \mcC $ for all $ x,y \in \mcC $;
    \item $ F_{f,g} : \Hom_{\Hom(x,y)}(f,g) \to \Hom_{\Hom(Fx,Fy)}(Ff,Fg) $ for all $ f,g \in \Hom(x,y) $;
    \item etc.
\end{enumerate}
are surjective.

By surjectivity we mean, essential surjectivity, i.e.~surjectivity up to equivalence. By equivalence we mean coinductive equivalence. Coinductive equivalences have been studied by Benjamin and Markakis~\cite{benjamin2024invertible} in the related framework of computads due to Dean et al.~\cite{dean2024computads}. Note that every coherence is a coinductive equivalence.

\paragraph{Rules for invertible morphisms.}
Given a term $ \Gamma \vdash u : s \to t $ in \texttt{CaTT}, we can ensure this is an equivalence by introducing a new judgment $ \Gamma \vDash u : s \to t $ which obeys the rules
\medskip
\begin{center}
    \begin{prooftree}
        \hypo{\Gamma \vDash u : s \to t}
        \infer1{\Gamma \vdash u : s \to t \qquad\qquad \Gamma \vdash \texttt{inv}(u) : t \to s}
    \end{prooftree}
\end{center}
\medskip
\begin{center}\begin{prooftree}
    \hypo{\Gamma \vDash u : s \to t}
    \infer1{\Gamma \vDash \texttt{eta}(u) : 1_s \to u \cdot \texttt{inv}(u) \qquad\qquad \Gamma \vDash \texttt{eps}(u) : \texttt{inv}(u) \cdot u \to 1_t}
\end{prooftree}
\end{center}
\medskip
We say that an $n$-transfor is invertible, if all of its components are invertible.

Returning to $ \texttt{ucone}_* $, on objects we must require the following: given a universal cone $ \Gamma \vdash \texttt{ucone} : K $ over $ \Gamma $, where $ K \equiv (\Gamma, c : \Ob, \Pi)$, there exists a term $ \text{u}m : c \to \limit_\Gamma $ and an invertible modification $ \overline{\texttt{u}m} $ between the cone $ K $ and the universal cone composed with $\texttt{u}m $. Working again with our example $ \Gamma \equiv x : \Ob, y : \Ob, f : x \to y $, pictorially we have:

\begin{equation}  \label{diag:UniCone}
    \begin{tikzcd}
        {\lim_\Gamma} && c \\
        & x \\
        & y
        \arrow["{\texttt{u}p_x}"', from=1-1, to=2-2]
        \arrow[""{name=0, anchor=center, inner sep=0}, "{\texttt{u}p_y}"', bend right=30, from=1-1, to=3-2]
        \arrow["{\texttt{u}m}"', from=1-3, to=1-1]
        \arrow[""{name=1, anchor=center, inner sep=0}, "{p_x}", from=1-3, to=2-2]
        \arrow[""{name=2, anchor=center, inner sep=0}, "{p_y}", bend left=30, from=1-3, to=3-2]
        \arrow["f", from=2-2, to=3-2]
        \arrow["{\texttt{u}p_f}"', shorten <=4pt, Rightarrow, from=0, to=2-2]
        \arrow["\simeq", shorten <=12pt, shorten >=15pt, Rightarrow, from=1, to=1-1]
        \arrow["{p_f}", shorten <=4pt, Rightarrow, from=2, to=2-2]
    \end{tikzcd}
    \qquad\qquad
    \begin{tikzcd}
        {\lim_\Gamma} && c \\
        \\
        & y
        \arrow["{{\texttt{u}p_y}}"', bend right=30, from=1-1, to=3-2]
        \arrow["{{\texttt{u}m}}"', from=1-3, to=1-1]
        \arrow[""{name=0, anchor=center, inner sep=0}, "{{p_y}}", bend left=30, from=1-3, to=3-2]
        \arrow["\simeq", shorten <=20pt, shorten >=15pt, Rightarrow, from=0, to=1-1]
    \end{tikzcd}
\end{equation}

If $ M \equiv (\Gamma; c : \Ob; M_{1}; M_{2}; M_{3}) $ is a modification of cones, that is $ M \texttt{ ctrf } (\Gamma;c) $, then the whole data of diagram~\ref{diag:UniCone} can be organized into the context morphism
\begin{align*}
    K \vdash \langle \id_\Gamma, c, \id_\Pi, \texttt{u}m \ast \overline{ \texttt{u}p}, \overline{ \texttt{u}m }\rangle : (\Gamma, c : \Ob, M_{1}, M_{2}, M_{3}).
\end{align*}
Here, if $\Gamma \equiv (x_i : A_i)_{1\le i \le n}$,  $\texttt{u}m \ast \overline m$ denotes a list of $|\Gamma|$ terms, the $i$-th term of which is given by composing $ \texttt{u}m $ with $\texttt{u}p_{x_i}$ appropriately. So in the above example we have $ \texttt{u}m \ast \overline{\texttt{u}p} \equiv (\texttt{u}m \cdot\texttt{u}p_x, \texttt{u}m\cdot\texttt{u}p_y, 1_{\texttt{u}m}\mast{2}{0}\texttt{u}p_f) $. The notation $\overline{\texttt{u}m}$, on the other hand, contains all the components of the modification. In the above example this includes the two invertible cells, depicted in the diagram, as well as a $3$-dimensional cell interpolating between the two diagrams.

At the next level we are first given two maps $ f, g : c \to \limit_\Gamma $ with which we obtain two cones $ f \ast \overline{\texttt{u}p} $ and $ g \ast \overline{\texttt{u}p} $ with apex $ c $ by composition. Then, given any modification between these two cones, namely a list of variables denoted by the shorthand notation $ \overline m :  f \ast \overline{\texttt{u}p} \to g \ast \overline{\texttt{u}p} $, there should exist a cell $ \texttt{u}q : f \to g $, which when composed with the universal cone yields an modification $ \texttt{u}q \ast \overline{\texttt{u}p} $ equivalent to $ \overline m $. Using again a shorthand notation we denote this invertible map by $ \overline{\texttt{u}q} : \overline m \to \texttt{u}q \ast \overline{\texttt{u}p} $. As a context morphism this is given by
\begin{align} \label{eq:WhiskContMor}
    \Gamma, \, c : \Ob, \, f : c \to \limit_\Gamma, \, g : c \to \limit_\Gamma, \, \overline m : f \ast \overline{\texttt{u}p} \to g \ast \overline{\texttt{u}p} \vdash 
    \raisebox{-1.6ex}{\resizebox{1.2\width}{3.3\height}{\Bigg \langle}}
        \begin{matrix*}
            \id_\Gamma \\[1mm]
            c \\[1mm]
            f \ast \overline{\texttt{u}p} \\[1mm]
            g \ast \overline{\texttt{u}p} \\[1mm]
            \overline m \\[1mm]
            \texttt{u}q \ast \overline{\texttt{u}p} \\[1mm]
            \overline{\texttt{u}q} 
        \end{matrix*}
    \raisebox{-1.6ex}{\resizebox{1.2\width}{3.3\height}{\Bigg \rangle}} :
    \begin{pmatrix*}
        \Gamma \\[1mm]
        c : \Ob \\[1mm]
        M_{1} \\[1mm]
        M_{2} \\[1mm]
        M_{3} \\[1mm]
        M_{4} \\[1mm]
        M_{5}
    \end{pmatrix*}
\end{align}
where $ M \equiv (\Gamma; c : \Ob; M_{1}; \dots; M_{5}) $ encodes a perturbation between modifications of cones. To obtain such a context morphism, we will work with an arbitrary cone in the form of a context $ K $ over $ \Gamma $ (instead of the universal cone) where we can control the variables, and subsequently substitute a universal cone into the construction. As an example, the context morphism in equation~\ref{eq:WhiskContMor} is obtained as the composite of the two context morphisms
\begin{gather}
    \begin{pmatrix*} \label{eq:FirstContextMorphism}
        \Gamma \\[1mm]
        c : \Ob \\[1mm]
        f : c \to \limit_\Gamma \\[1mm]
        g : c \to \limit_\Gamma \\[1mm]
        \overline m : f \ast \overline{\texttt{u}p} \to g \ast \overline{\texttt{u}p} \\[1mm]
    \end{pmatrix*} \vdash
    \raisebox{-1.6ex}{\resizebox{1.2\width}{3.3\height}{\Bigg \langle}}
        \begin{matrix*}
            \kappa \\[1mm]
            c \\[1mm]
            f \\[1mm]
            g \\[1mm]
            \overline{m} \\[1mm]
            \texttt{u}q \\[1mm]
            \overline{\texttt{u}q} 
        \end{matrix*}
    \raisebox{-1.6ex}{\resizebox{1.2\width}{3.3\height}{\Bigg \rangle}} :
    \begin{pmatrix*}
        K \\[1mm]
        c' : \Ob \\[1mm]
        f' : c' \to c \\[1mm]
        g' : c' \to c \\[1mm]
        \Delta \\[1mm]
        \alpha' : f' \to g' \\[1mm]
        \Delta'
    \end{pmatrix*} \\[5mm] \label{eq:SecondContextMorphism}
    \begin{pmatrix*}
        K \\[1mm]
        c' : \Ob \\[1mm]
        f' : c' \to c \\[1mm]
        g' : c' \to c \\[1mm]
        \Delta \\[1mm]
        \alpha' : f' \to g' \\[1mm]
        \Delta'
    \end{pmatrix*} \vdash 
    \raisebox{-1.6ex}{\resizebox{1.2\width}{3.3\height}{\Bigg \langle}}
        \begin{matrix*}
            \id_\Gamma \\[1mm]
            c' \\[1mm]
            f' \ast \id_\Pi \\[1mm]
            g' \ast \id_\Pi \\[1mm]
            \id_\Delta \\[1mm]
            \alpha' \ast \id_{\Pi} \\[1mm]
            \id_{\Delta'}
        \end{matrix*}
    \raisebox{-1.6ex}{\resizebox{1.2\width}{3.3\height}{\Bigg \rangle}} :
    \begin{pmatrix*}
        \Gamma \\[1mm]
        c : \Ob \\[1mm]
        M_{1} \\[1mm]
        M_{2} \\[1mm]
        M_{3} \\[1mm]
        M_{4} \\[1mm]
        M_{5}
    \end{pmatrix*}
\end{gather}
The order in which the variables appear in the context $ K,c': \Ob, f' : c' \to c, g' : c' \to c, \Delta, \alpha' : f' \to g', \Delta' $ guarantees that we can later associate the correct free variables to the term $ \texttt{u}q $ and those in the list of terms $ \overline{\texttt{u}q} $.

The context morphisms are built inductively by moving step by step through all the variables of $ M $. Even though it is possible to do without, let us introduce a new judgment $ \Delta \vdash_\Gamma x : A$, which we will use as a convenient device to keep track of the process while keeping the notation more compact. This judgment is subject to the rules
    \begin{center}
        \begin{prooftree}
            \hypo{\Gamma \vdash A}
            \infer1{\Gamma, x : A \vdash_{\Gamma} x : A}
        \end{prooftree}
        \qquad\qquad
        \begin{prooftree}
            \hypo{\Delta \vdash_\Gamma x : A}
            \hypo{\Delta \vdash B}
            \infer2{\Delta, y : B \vdash_\Gamma x : A.}
        \end{prooftree}
    \end{center}
    \bigskip
    If $ \Delta \vdash_\Gamma x : A $ is derivable, we say $ \Delta $ is an extension of $ \Gamma $.

\begin{lemma} \label{lem:ExtensionRules}
    Given the judgement $ \Delta \vdash_\Gamma x : A $, then the following judgments are derivable:
    \begin{align*}
        \Gamma \vdash, \qquad \Delta \vdash, \qquad \Delta \vdash x : A.
    \end{align*}
    Moreover, $ \Delta \equiv \Gamma, x : A, \Delta' $ for some list of variables $ \Delta' $.
\end{lemma}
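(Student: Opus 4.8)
The plan is to induct on the derivation of $\Delta \vdash_\Gamma x : A$, which has only two rules, so there are exactly two cases.

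In the base case the derivation concludes with the first rule, so $\Delta \equiv \Gamma, x : A$ and the sole premise is $\Gamma \vdash A$. First I would invoke the standard presupposition property of \CaTT{} — that a derivable judgment $\Gamma \vdash A$ entails $\Gamma \vdash$ — which is itself a routine induction over the rules (Ob) and $(\to\mathcal{I})$ (both of which carry $\Gamma \vdash$ among their premises, directly or via $\Gamma \vdash s : A$), and which I would simply cite as known. Given $\Gamma \vdash$, the rule (CE) applied to $\Gamma \vdash A$ yields $\Delta \equiv \Gamma, x : A \vdash$; and since $(x : A) \in \Delta$, the rule (VAR) then gives $\Delta \vdash x : A$. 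The decomposition $\Delta \equiv \Gamma, x : A, \Delta'$ holds with $\Delta' \equiv \emptyset$.

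For the inductive step the derivation ends with the second rule, with premises $\Delta \vdash_\Gamma x : A$ and $\Delta \vdash B$ and conclusion $\Delta, y : B \vdash_\Gamma x : A$. By the inductive hypothesis $\Gamma \vdash$, $\Delta \vdash$, $\Delta \vdash x : A$ are derivable and $\Delta \equiv \Gamma, x : A, \Delta'$ for some list $\Delta'$. The judgment $\Gamma \vdash$ is carried over verbatim. Applying (CE) to $\Delta \vdash B$ gives $\Delta, y : B \vdash$, and since $(x : A)$ still lies in $\Delta, y : B$ a further application of (VAR) gives $\Delta, y : B \vdash x : A$. Finally $\Delta, y : B \equiv \Gamma, x : A, \Delta', y : B$, so the required list of variables is $\Delta', y : B$, closing the induction.

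The only step requiring any care is the presupposition implication $\Gamma \vdash A \Rightarrow \Gamma \vdash$ in the base case; everything else is obtained by reading the premises of each rule directly through (CE) and (VAR). I therefore expect no genuine obstacle.
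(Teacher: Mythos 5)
Your proof is correct and is precisely the induction over the derivation of $\Delta \vdash_\Gamma x : A$ that the paper invokes (its proof is simply ``By induction''), with the base case handled via the presupposition property, (CE) and (VAR), and the inductive step by (CE) and (VAR) again. No discrepancies with the paper's intended argument.
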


\begin{proof}
    By induction.
\end{proof}

We now spell out a set of rules which generate the context morphism of the form of equation~\ref{eq:SecondContextMorphism}. The first context morphism of equation~\ref{eq:FirstContextMorphism}, on the other hand, is what will be generated by the rules for the universal property in the next subsection.

\paragraph{Rules for postcomposition with a cone.}
    We introduce a new judgment $ W \vdash^\whisk_{K; \alpha} w : M $ subject to the rules
    \begin{center}
        \adjustbox{scale=0.8}{
        \begin{prooftree}
            \hypo{K \textup{\texttt{ cone }} (\Gamma, c)}
            \infer1{K, c' : \Ob, D^{n}(c,c',\alpha) \vdash^\textup{\pstar}_{K; \alpha } \langle \id_\Gamma, c' \rangle : (\Gamma, c : \Ob)}
        \end{prooftree}
        }
    \end{center}
    \bigskip
    \begin{center}
        \adjustbox{scale=0.8}{
        \begin{prooftree}
            \hypo{K \textup{\texttt{ cone }} (\Gamma, c)}
            \infer[no rule]1{M \textup{\texttt{ ctrf }} (\Gamma, c)}
            \hypo{W \vdash^\textup{\pstar}_{K;\alpha } w : \Theta}
            \hypo{M \vdash_\Theta x : X}
            \hypo{\Theta \vdash u : X[w]}
            \infer4{W \vdash^\textup{\pstar}_{K; \alpha } \langle w, u \rangle : (\Theta, x : X)}
        \end{prooftree}
        \quad
        \begin{tabular}{c}
            $\dim(\alpha) = n_M $\\[2mm]
            $x \in \FV(M_{j}), \ j \in \{1, \dots, 2n_M - 2, 2n_M \} $ \\[2mm]
            $ \textup{Star}(\Gamma, K, M, x, u : X[w], \alpha : A) $
        \end{tabular}
        }
    \end{center}
    \bigskip
    \begin{center}
        \adjustbox{scale=0.8}{
        \begin{prooftree}
            \hypo{K \textup{\texttt{ cone }} (\Gamma, c)}
            \infer[no rule]1{M \textup{\texttt{ ctrf }} (\Gamma, c)}
            \hypo{W \vdash^\textup{\pstar}_{K;\alpha } w : \Theta}
            \hypo{M \vdash_\Theta x : X}
            \infer3{W, x' : X[w] \vdash^\textup{\pstar}_{K;\alpha } \langle w, x' \rangle : (\Theta, x : X)}
        \end{prooftree}
        \quad
        \begin{tabular}{c}
            $\dim(\alpha) = n_M $\\[2mm]
            $x \in \FV(M_{j}), \ j \in \{ 2n_M - 1, 2n_M +1 \} $
        \end{tabular}
        }
    \end{center}
    \bigskip
    \begin{center}
        \adjustbox{scale=0.8}{
        \begin{prooftree}
            \hypo{W, \alpha : A, x : X, W' \vdash^\textup{\pstar}_{K,\alpha } w : \Theta}
            \infer1{W, x : X, \alpha : A, W' \vdash^\textup{\pstar}_{K;\alpha } w : \Theta}
        \end{prooftree}
        \quad
        \begin{tabular}{c}
            $ \alpha \not \in \FV(X) $
        \end{tabular}
        }
    \end{center}
    \bigskip
    \begin{center}
        \adjustbox{scale=0.8}{
        \begin{prooftree}
            \hypo{W, \alpha : A, x : X, W' \vdash^\textup{\pstar}_{K,\alpha } w : \Theta}
            \infer1{W, \alpha : A, x : X, W' \vdash^\textup{\whisk}_{K;\alpha } w : \Theta}
        \end{prooftree}
        \quad
        \begin{tabular}{c}
            $ \alpha \in \FV(X) $
        \end{tabular}
        }
    \end{center}
    \bigskip
    where, with the notation $ \Gamma \equiv (x_i : A_i)_{1 \le i \le l} $ and $ K \equiv (\Gamma, c : \Ob, (p_i : T_{x_i})_{1\le i \le l}) $, as well as $ M_{j} \equiv (x_{j,i} : T_{j,i})_{1 \le i \le l} $, the shorthand $ \textup{Star}(\Gamma,K,M,x_{j,i}, u : X, \alpha : T_\alpha) $ stands for
    \begin{align*}
        \FV(u : X) &= \FV(\alpha : T_\alpha) \cup \FV(p_i : T_i) \\[3mm]
        \FV(u : X) &= \begin{cases}
            \FV(\sigma^{n-j}(\alpha) : \partial T_\alpha ) \cup \FV(p_i : T_{x_i}), \ & i \text{ odd} \\
            \FV(\tau^{n-j}(\alpha) : \partial T_\alpha )\cup \FV(p_i : T_{x_i}), \ & i \text{ even}
        \end{cases}
    \end{align*}

\begin{lemma}
    Given a derivable judgment $ W \vdash^\whisk_{K;\alpha} w : \Theta $, the following judgments are derivable
    \begin{align*}
        W \vdash, \qquad \Theta \vdash, \qquad W \vdash w : \Theta.
    \end{align*}
\end{lemma}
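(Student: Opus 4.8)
The plan is to prove the statement by induction on the derivation of $ W \vdash^\whisk_{K;\alpha} w : \Theta $. Since every rule except the last produces the auxiliary judgment $ W \vdash^\pstar_{K;\alpha} w : \Theta $, I would first strengthen the induction hypothesis to cover both forms: whenever $ W \vdash^\pstar_{K;\alpha} w : \Theta $ or $ W \vdash^\whisk_{K;\alpha} w : \Theta $ is derivable, the three judgments $ W \vdash $, $ \Theta \vdash $ and $ W \vdash w : \Theta $ hold. The final rule merely relabels $ \pstar $ as $ \whisk $ without touching $ W $, $ w $ or $ \Theta $, so it transfers the conclusion verbatim, and it suffices to treat the four rules producing $ \pstar $-judgments.

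For the base rule I would start from $ K \ \cone \ (\Gamma, c) $. Lemma~\ref{lem:ConeAdmiss} gives $ K \vdash $ and $ K \equiv \Gamma, c : \Ob, \Pi $; extending $ K $ by $ c' : \Ob $ and the globe $ D^{n}(c,c',\alpha) $ via repeated (CE), each inserted type being manifestly well-formed, yields $ W \equiv K, c' : \Ob, D^{n}(c,c',\alpha) \vdash $. Since $ \Gamma \vdash $ (a prefix of $ K $) and $ \Ob $ is always a well-formed type, $ \Theta \equiv \Gamma, c : \Ob \vdash $. For the substitution, weakening the identity $ \Gamma \vdash \id_\Gamma : \Gamma $ along $ W \vdash $ gives $ W \vdash \id_\Gamma : \Gamma $, and since $ \Ob[\id_\Gamma] \equiv \Ob $ and $ (c' : \Ob) \in W $, the rule \sSE{} produces $ W \vdash \langle \id_\Gamma, c'\rangle : (\Gamma, c : \Ob) $.

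For the two extension rules I would apply the induction hypothesis to the left premise to get $ W \vdash $, $ \Theta \vdash $, $ W \vdash w : \Theta $. In both cases $ \Theta $ is extended to $ \Theta, x : X $, and Lemma~\ref{lem:ExtensionRules} applied to the premise $ M \vdash_\Theta x : X $ gives $ \Theta \vdash x : X $, hence $ \Theta \vdash X $ and $ \Theta, x : X \vdash $. In the term-extension rule the context stays $ W $, and the premise $ \Theta \vdash u : X[w] $ is exactly what \sSE{} needs to conclude $ W \vdash \langle w, u\rangle : (\Theta, x : X) $. In the variable-extension rule the context becomes $ W, x' : X[w] $: admissibility of substitution on types turns $ \Theta \vdash X $ and $ W \vdash w : \Theta $ into $ W \vdash X[w] $, so $ W, x' : X[w] \vdash $; then \sVAR{} gives $ W, x' : X[w] \vdash x' : X[w] $, and after weakening $ w $ the rule \sSE{} gives $ W, x' : X[w] \vdash \langle w, x'\rangle : (\Theta, x : X) $.

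The one rule needing something genuinely new is the exchange rule, which swaps adjacent variables $ x : X $ and $ \alpha : A $ in $ W, \alpha : A, x : X, W' $ subject to $ \alpha \notin \FV(X) $. Here I would first establish an exchange lemma for \CaTT{}: if $ \Delta_1, \alpha : A, x : X, \Delta_2 \vdash \mcJ $ and $ \alpha \notin \FV(X) $, then $ \Delta_1, x : X, \alpha : A, \Delta_2 \vdash \mcJ $, for $ \mcJ $ ranging over context, type, term and substitution judgments. This is a routine induction on derivations, since the free-variable side conditions in \sOb{}, \sTo{}, \sVAR{}, \sOP{}, \sCOH{} and \sSE{} are insensitive to the order of the ambient context, but I expect it to be the part that costs the most bookkeeping, in particular verifying that every type in the tail $ W' $ remains well-formed after the swap. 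Granting it, the induction hypothesis on the premise gives $ W, \alpha : A, x : X, W' \vdash $ and $ W, \alpha : A, x : X, W' \vdash w : \Theta $, and exchange turns these into the presuppositions for $ W, x : X, \alpha : A, W' $, with $ \Theta \vdash $ unchanged. The main obstacle is thus this exchange lemma; the remaining cases are a direct unwinding of the \CaTT{} rules together with Lemmas~\ref{lem:ConeAdmiss} and~\ref{lem:ExtensionRules}.
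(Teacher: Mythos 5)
Your proposal is correct and follows essentially the same route as the paper, whose proof is simply stated as ``by induction'': a case analysis on the rules generating $\vdash^\pstar_{K;\alpha}$ and $\vdash^\whisk_{K;\alpha}$, discharging each case with Lemma~\ref{lem:ConeAdmiss}, Lemma~\ref{lem:ExtensionRules}, the context and substitution rules of \CaTT{}, and an admissible exchange lemma for the reordering rule, which you rightly single out as the only step requiring a genuinely new (if routine) auxiliary result. The only nitpick is that Lemma~\ref{lem:ExtensionRules} applied to $M \vdash_\Theta x : X$ yields $M \vdash x : X$ rather than $\Theta \vdash x : X$; what you actually need, and what inversion on the first rule for $\vdash_\Theta$ does give, is $\Theta \vdash X$, from which $\Theta, x : X \vdash$ follows by \f{(CE)}.
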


\begin{proof}
    By induction.
\end{proof}

\subsection{The universal property} \label{subsec:UP}

We are now ready to formulate the universal property.

\paragraph{Rules for the universal property.}
    The terms generated by the universal property are obtained by the term constructor rules
    \medskip
    \begin{center}
        \adjustbox{scale=0.87}{
            \begin{prooftree}
                \hypo{& K \textup{\texttt{ cone }} (\Gamma;c)}
                \infer[no rule]1{& M \textup{\texttt{ ctrf }}(\Gamma, c)}
                \hypo{&\Lambda, \alpha : T_\alpha, \Delta \vdash_{K, \Theta} x : X}
                \infer[no rule]1{&\Lambda, \alpha : T_\alpha, \Delta \vdash^\whisk_{K;\alpha } w : M}
                \hypo{\Omega \vdash^{\textup{\texttt{uni}}}_\Gamma \theta : (K,\Theta)}
                \infer3{J_1 \qquad J_2}
            \end{prooftree}
            \qquad
            \begin{tabular}{c}
                $ \dim(\alpha) = n_M $ 
            \end{tabular}
        }
    \end{center}
    \bigskip
    where the $J_1,J_2$ together with additional side conditions are given by:
    \begin{align*}
        J_1: && &\Omega \vdash \textup{\texttt{uni}}_{\Gamma,x} : X[\theta] && x \in \{\alpha\} \\[2mm] 
        J_2: && &\Omega \vDash \textup{\texttt{uni}}_{\Gamma,x} : X[\theta] && x \in \FV(\Delta).
    \end{align*}
    These term constructor rules refer to a new judgment $ \Omega \vdash^{\texttt{uni}}_\Gamma \theta : \Theta $, generalizing the judgment $ \Gamma \vdash_\Gamma^{\texttt{uni}} \kappa : \Theta$ introduced in the rules for the universal cone, and which is subject to the rules
    \bigskip
    \begin{center}
        \adjustbox{scale=0.87}{
            \begin{prooftree}
                \hypo{& K \textup{\texttt{ cone }} (\Gamma;c)}
                \infer[no rule]1{& M \textup{\texttt{ ctrf }}(\Gamma, c)}
                \hypo{&\Lambda, \alpha : T_\alpha, \Delta \vdash_{K, \Theta} x : X}
                \infer[no rule]1{&\Lambda, \alpha : T_\alpha, \Delta \vdash^\whisk_{K;\alpha } w : M}
                \hypo{\Omega \vdash^{\textup{\texttt{uni}}}_\Gamma \theta : (K,\Theta)}
                \infer3{J_3 \qquad J_4}
            \end{prooftree}
            \qquad
            \begin{tabular}{c}
                $ \dim(\alpha) = n_M $
            \end{tabular}
        }
    \end{center}
    \begin{align*}
        J_3: && &\Omega, x' : X[\theta] \vdash^{\textup{\texttt{uni}}}_\Gamma \langle \theta, x' \rangle : (\Omega, x : X) && x \in \FV(\Lambda) \\[2mm]
        J_4: && &\Omega \vdash^{\textup{\texttt{uni}}}_\Gamma \langle \theta, \textup{\texttt{uni}}_{\Gamma,x} \rangle : (\Theta, x : X) && x \in \{\alpha\} \cup \FV(\Delta).
    \end{align*}
The terms $ \texttt{uni}_{\Gamma,x} $ also depend on $ M $ and $K $ as well as the context used in the \texttt{star} judgment, which should therefor also appear in the term constructor as subscripts. This, however would make the notation unwieldy, because of which we suppress these dependencies.

\begin{example} \label{ex:TerminalObject}
    Consider the empty diagram $ \emptyset \vdash $. Applying the rules for cones we can derive the judgment $ (c : \Ob) \texttt{ cone } (\emptyset,c) $, that is, a cone over the empty diagram is just a 0-cell, the apex. A universal cone over $ \emptyset $ of shape $ c : \Ob $ is then given by a context morphism $ \emptyset \vdash^{\texttt{uni}}_\emptyset \langle \limit_\emptyset \rangle : (c : \Ob) $. Let us write $ \top $ instead of $ \limit_\emptyset $.

    Regarding the universal property, a conical 2-transfor over $ \emptyset $ is given by the context $ c : \Ob $, or more precisely by the judgment $ (\emptyset;c : \Ob;\emptyset,\emptyset,\emptyset) \texttt{ ctrf } (\emptyset,c) $. The \texttt{star} judgement required by the rule must be of the form $ c : \Ob, c' : \Ob, f : c' \to c \vdash \langle c' \rangle : (c : \Ob) $. The only judgment of the form $ \vdash^\texttt{uni}_\emptyset $ available is $ \emptyset \vdash^{\texttt{uni}} \langle \top \rangle : (c : \Ob) $. Putting everything together, the first application of the rule gives
    \bigskip
    \begin{center}
        \adjustbox{scale=0.9}{
        \begin{prooftree}
            \hypo{&(c : \Ob) \texttt{ cone } (\emptyset,c) }
            \infer[no rule]1{& (c : \Ob) \texttt{ ctrf } (\emptyset,c)}
            \hypo{&c : \Ob, c' : \Ob, f : c' \to c \vdash_{c : \Ob} c': \Ob }
            \infer[no rule]1{&c : \Ob, c' : \Ob, f : c' \to c \vdash^\whisk_{c : \Ob, f : c' \to c} \langle c' \rangle : (c : \Ob)}
            \hypo{&\emptyset \vdash^{\texttt{uni}}_\emptyset \langle \top \rangle : (c : \Ob)}
            \infer3{ c' : \Ob \vdash^{\texttt{uni}}_\emptyset \langle \top, c' \rangle : (c : \Ob, c' : \Ob) }
        \end{prooftree}
        }
    \end{center}
    \bigskip
    and the second and final application gives
    \bigskip
    \begin{center}
        \adjustbox{scale=0.8}{
        \begin{prooftree}
            \hypo{&(c : \Ob) \texttt{ cone } (\emptyset,c) }
            \infer[no rule]1{& (c : \Ob) \texttt{ ctrf } (\emptyset,c)}
            \hypo{&c : \Ob, c' : \Ob, f : c' \to c \vdash_{c : \Ob, c': \Ob} f : c' \to c }
            \infer[no rule]1{&c : \Ob, c' : \Ob, f : c' \to c \vdash^\whisk_{c : \Ob, f : c' \to c} \langle c' \rangle : (c : \Ob)}
            \hypo{&c' : \Ob \vdash^{\texttt{uni}}_\emptyset \langle \top, c' \rangle : (c : \Ob, c' : \Ob)}
            \infer3{c' : \Ob \vdash^{\texttt{uni}}_\emptyset \langle \top, c', \texttt{uni}_{\emptyset,f} \rangle : (c : \Ob, c' : \Ob, f : c' \to c)}
        \end{prooftree}
        }
    \end{center}
    \bigskip
    Pictorially we have
    \begin{center}
        \begin{tikzcd}
            {\top} & {c'}
            \arrow["{\texttt{uni}_{\emptyset,f}}", from=1-2, to=1-1]
        \end{tikzcd}
    \end{center}
\end{example}

\begin{example}
    Consider again the diagram $ \emptyset \vdash $. This time we consider $ (c : \Ob) $ as a 3-transfor over $ \emptyset $. Application of the rule then gives
    \begin{center}
        \begin{prooftree}
            \hypo{&(c : \Ob) \texttt{ cone } (\emptyset,c) }
            \infer[no rule]1{& (c : \Ob) \texttt{ ctrf } (\emptyset,c)}
            \infer[no rule]1{&c : \Ob, c' : \Ob, f : c' \to c, g : c' \to c, \alpha : f \to g \vdash_{c : \Ob} c' : \Ob }
            \infer[no rule]1{&c : \Ob, c' : \Ob, f : c' \to c, g : c' \to c, \alpha : f \to g \vdash^{\whisk}_{c:\Ob;\alpha} \langle c' \rangle : (c : \Ob)}
            \infer[no rule]1{&c' : \Ob \vdash^{\texttt{uni}}_\emptyset \langle \top \rangle : (c : \Ob)}
            \infer1{&\multiquad[3]c' : \Ob \vdash^{\texttt{uni}}_\emptyset \langle \top, c' \rangle : (c : \Ob, c' : \Ob) \multiquad[3]}
        \end{prooftree}
    \end{center}
    Three more applications give
    \begin{align*}
        c' : \Ob, f' : c' \to \top, g' : c' \to \top \vdash 
        \raisebox{-1ex}{\resizebox{1.2\width}{2.3\height}{\Bigg \langle}}
        \begin{matrix*}
            \top \\[1mm]
            c' \\[1mm]
            f'  \\[1mm]
            g ' \\[1mm]
            \texttt{uni}_{\emptyset,\alpha}
        \end{matrix*}
    \raisebox{-1ex}{\resizebox{1.2\width}{2.3\height}{\Bigg \rangle}} :
    \begin{pmatrix*}
        c : \Ob \\[1mm]
        c' : \Ob \\[1mm]
        f : c' \to c \\[1mm]
        g : c' \to c \\[1mm]
        \alpha : f \to g
    \end{pmatrix*}
    \end{align*}
    Pictorially we have
    \begin{center}
        \begin{tikzcd}[column sep=large]
            {\top} & {c'}
            \arrow[""{name=0, anchor=center, inner sep=0}, "{f'}"', bend right=40, from=1-2, to=1-1]
            \arrow[""{name=1, anchor=center, inner sep=0}, "{g'}", bend left=40, from=1-2, to=1-1]
            \arrow["{\texttt{uni}_{\emptyset,\alpha}}", shorten <=3pt, shorten >=3pt, Rightarrow, from=0, to=1]
        \end{tikzcd}
    \end{center}
\end{example}

\subsection{Free variables and cut admissibility} \label{subsec:FreeVars}

For the type theory to function properly we need to make sure the cut rule is admissible,~i.e. we can perform substitution. To ensure the admissibility of the cut rule we do the usual trick: introduce just enough cut into the remaining rules. In our case, it suffices to do this for all the rules introducing new terms. For the universal cone, the rules take the form
\medskip
\begin{center}
    \begin{prooftree}
        \hypo{(\Theta,x:X,\Theta') \textup{\texttt{ cone }} (\Gamma;c)}
        \hypo{\Gamma \vdash^{\textup{\texttt{uni}}}_\Gamma \kappa : \Theta}
        \hypo{\Delta \vdash \gamma : \Gamma}
        \infer3{\Delta \vdash \textup{\texttt{ucone}}_{\Gamma,x}[\gamma] : X[\kappa \circ \gamma]}
    \end{prooftree}
\end{center}
\bigskip
\begin{center}
    \begin{prooftree}
        \hypo{(\Theta,x:X,\Theta') \textup{\texttt{ cone }} (\Gamma;c)}
        \hypo{\Gamma \vdash^{\textup{\texttt{uni}}}_\Gamma \kappa : \Theta}
        \infer2{\Gamma \vdash^{\textup{\texttt{uni}}}_\Gamma \langle \kappa, \textup{\texttt{ucone}}_{\Gamma,x}[\id_\Gamma] \rangle : (\Theta, x : X)}
    \end{prooftree}
\end{center}
\medskip
For the universal property we make similar modifications. For $ J_1 $ and $ J_2 $ rule we add in a cut and modify the rule accordingly
\medskip
\begin{center}
    \adjustbox{scale=0.87}{
        \begin{prooftree}
            \hypo{& K \textup{\texttt{ cone }} (\Gamma;c)}
            \infer[no rule]1{& M \textup{\texttt{ ctrf }}(\Gamma, c)}
            \hypo{&\Lambda, \alpha : T_\alpha, \Delta \vdash_{K, \Theta} x : X}
            \infer[no rule]1{&\Lambda, \alpha : T_\alpha, \Delta \vdash^\whisk_{K;\alpha } w : M}
            \hypo{\Omega \vdash^{\textup{\texttt{uni}}}_\Gamma \theta : (K,\Theta)}
            \hypo{\Phi \vdash \omega : \Omega}
            \infer4{J_1 \qquad\qquad J_2}
        \end{prooftree}
        \quad
        \begin{tabular}{c}
            \small $ \dim(\alpha) = n_M $
        \end{tabular}
    }
\end{center}
where
\begin{align*}
    J_1: && &\Phi \vdash \textup{\texttt{uni}}_{\Gamma,x}[\omega] : X[\theta\circ \omega] && x \in \{\alpha\} \\[2mm]
    J_2: && &\Omega \vDash \textup{\texttt{uni}}_{\Gamma,x}[\id_\Gamma] : X[\theta] && x \in \FV(\Delta).
\end{align*}
\bigskip
The rule for $ J_3 $ remains unchanged, while for $ J_4 $ the premise of the rules stays the same but the conclusion becomes
\begin{align*}
    J_4: && &\Omega \vdash^{\textup{\texttt{uni}}}_\Gamma \langle \theta, \textup{\texttt{uni}}_{\Gamma,x}[\id_\Gamma] \rangle : (\Theta, x : X) && x \in \{\alpha\} \cup \FV(\Delta).
\end{align*}
Finally, we add a cut to the rule producing the coinductive inverses:
\medskip
\begin{center}
    \begin{prooftree}
        \hypo{\Gamma \vDash u : s \to_A t}
        \hypo{\Delta \vdash \gamma : \Gamma}
        \infer2{\Delta \vDash \texttt{inv}(u[\gamma]) : t[\gamma] \to_{A[\gamma]} s[\gamma]}
    \end{prooftree}
\end{center}
\medskip
\begin{center}
    \begin{prooftree}
        \hypo{\Gamma \vDash u : s \to_A t}
        \hypo{\Delta \vdash \gamma : \Gamma}
        \infer2{\Delta \vDash \texttt{eta}(u[\gamma]) : 1_{s[\gamma]} \to u[\gamma] \cdot \texttt{inv}(u[\gamma]) \qquad \Delta \vDash \texttt{eps}(u[\gamma]) : \texttt{inv}(u[\gamma]) \cdot u[\gamma]  \to 1_{t[\gamma]}}
    \end{prooftree}
\end{center}
\medskip

To make substitution admissible we then make the following definition.
\begin{definition} Substitution on term constructors is defined by    
    \begin{align*}
        \textup{\texttt{ucone}}_{\Gamma,x}[\gamma][\delta] &\equiv \textup{\texttt{ucone}}_{\Gamma,x}[\gamma \circ \delta], \qquad & \textup{\texttt{inv}}(u)[\gamma] &\equiv \textup{\texttt{inv}}(u[\gamma]) \\
        \textup{\texttt{uni}}_{\Gamma,x}[\gamma][\delta] &\equiv \textup{\texttt{uni}}_{\Gamma,x}[\gamma \circ \delta], & \textup{\texttt{eta}}(u)[\gamma] &\equiv \textup{\texttt{eta}}(u[\gamma]) \\
        & &
        \textup{\texttt{eps}}(u)[\gamma] &\equiv \textup{\texttt{eps}}(u[\gamma]).
    \end{align*}
\end{definition}
With this at hand one can prove:
\begin{lemma} 
	The following rules are admissible in \textup{\texttt{CaTT}}
	\vspace{1mm}
	\begin{enumerate}[label=(\roman*)]
		\setlength\itemsep{0.7em}
		\item For types: \quad
		
			\begin{prooftree}
				\hypo{\Gamma \vdash A}
				\hypo{\Delta \vdash \gamma : \Gamma}
				\infer2{\Delta \vdash A[\gamma]}
			\end{prooftree}
		\quad \ and \quad $ A[\gamma][\delta] \equiv A[\gamma \circ \delta] $ \quad for all \quad $ \Phi \vdash \delta : \Delta $.
	
		\item For terms: \quad
		
			\begin{prooftree}
				\hypo{\Gamma \vdash t : A}
				\hypo{\Delta \vdash \gamma : \Gamma}
				\infer2{\Delta \vdash t[\gamma] : A[\gamma]}
			\end{prooftree}
		\quad and \quad $ t[\gamma][\delta] \equiv t[\gamma \circ \delta] $ \quad for all \quad $ \Phi \vdash \delta : \Delta $,

        and the same rule holds with $ \vdash $ replaced with $ \vDash $.

		\item For contexts: \quad
		
			\begin{prooftree}
				\hypo{\Gamma \vdash \theta : \Theta}
				\hypo{\Delta \vdash \gamma : \Gamma}
				\infer2{\Delta \vdash \theta \circ \gamma : \Theta}
			\end{prooftree}
		\ \ and \ \ $ (\theta \circ \gamma) \circ \delta \equiv \theta \circ (\gamma \circ \delta) $ \ for all \ $ \Phi \vdash \delta : \Delta $.
	\end{enumerate}
\end{lemma}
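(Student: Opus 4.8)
The plan is to prove (i), (ii) (including the $\vDash$-variant) and (iii) by a single simultaneous induction on the \emph{height} of the derivations of $\Gamma\vdash A$, $\Gamma\vdash t:A$, $\Gamma\vDash u:s\to_A t$ and $\Gamma\vdash\theta:\Theta$, carrying along at the same time the definitional equations for iterated substitution. For the fragment of the rules that belongs to \texttt{CaTT} itself (and to the underlying theory \texttt{GSet}) the argument is verbatim that of Benjamin--Finster--Mimram~\cite{benjamin2021globular}: substitution was made admissible there precisely by pre-installing the cut into $(\f{OP})$ and $(\f{COH})$, and these rules are unchanged here, while associativity $(\theta\circ\gamma)\circ\delta\equiv\theta\circ(\gamma\circ\delta)$ falls out of the clauses defining $\circ$ on the constructors in Definition~\ref{def:Substitution} together with the inductive hypothesis on immediate subterms. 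So the genuinely new work is confined to the four new term-formers $\textup{\texttt{ucone}}_{\Gamma,x}[\gamma]$, $\textup{\texttt{uni}}_{\Gamma,x}[\gamma]$, $\textup{\texttt{inv}}(u)$, and $\textup{\texttt{eta}}(u)$, $\textup{\texttt{eps}}(u)$, plus the observation that the new auxiliary judgments are inert under substitution.

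The case of $\textup{\texttt{ucone}}$ is representative and I would do it first. Suppose the last rule is the cut-enriched universal-cone rule, with fixed data $(\Theta,x:X,\Theta')\ \textup{\texttt{cone}}\ (\Gamma;c)$ and $\Gamma\vdash^{\textup{\texttt{uni}}}_\Gamma\kappa:\Theta$, a premise $\Delta\vdash\gamma:\Gamma$, and conclusion $\Delta\vdash\textup{\texttt{ucone}}_{\Gamma,x}[\gamma]:X[\kappa\circ\gamma]$; let $\Phi\vdash\delta:\Delta$ be the substitution to apply. The inductive hypothesis for (iii) applied to $\gamma$ and $\delta$ gives $\Phi\vdash\gamma\circ\delta:\Gamma$, and re-applying the same rule — with the \emph{same} fixed $(\Theta,x:X,\Theta')$ and $\kappa$, which carry no ambient-context dependence — yields $\Phi\vdash\textup{\texttt{ucone}}_{\Gamma,x}[\gamma\circ\delta]:X[\kappa\circ(\gamma\circ\delta)]$. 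The defining clause $\textup{\texttt{ucone}}_{\Gamma,x}[\gamma][\delta]\equiv\textup{\texttt{ucone}}_{\Gamma,x}[\gamma\circ\delta]$ identifies this term with the substituted one, and the associativity half of the inductive hypothesis for (iii) identifies $X[\kappa\circ(\gamma\circ\delta)]$ with $X[(\kappa\circ\gamma)\circ\delta]=X[\kappa\circ\gamma][\delta]$ — here one also uses that $\Gamma\vdash^{\textup{\texttt{uni}}}_\Gamma\kappa:\Theta$ implies $\Gamma\vdash\kappa:\Theta$ by the lemma following the universal-cone rules, so that $\kappa$ is a genuine substitution to which (iii) applies. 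The cases of $\textup{\texttt{uni}}_{\Gamma,x}[\gamma]$ and of $\textup{\texttt{inv}}$, $\textup{\texttt{eta}}$, $\textup{\texttt{eps}}$ go through identically, re-applying the corresponding cut-enriched rule to $\gamma\circ\delta$ (for the $\vDash$-versions, using that the enriched $\vDash$-rules carry the cut and that $\Gamma\vDash u:s\to_A t$ entails $\Gamma\vdash u:s\to_A t$), with the relevant definitional equations supplied by the Definition of substitution on term constructors. The premises of these rules that are \emph{not} substitutions — the auxiliary judgments $\textup{\texttt{cone}}$, $\textup{\texttt{ctrf}}$, $\textup{\texttt{pgray}}$, $\textup{\texttt{star}}$ and the extension judgment $\vdash_\Gamma$ — mention only contexts, never an ambient substitution, so they are simply transported unchanged, exactly as the fixed ps-context $\Gamma$ in $\op_{\Gamma,s\to_A t}[\gamma]$ is in the classical proof; this is the content ensured by Subsection~\ref{subsec:FreeVars}.

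The main obstacle is the choreography of the simultaneous induction rather than any individual computation. One must arrange it so that, in closing the $\textup{\texttt{ucone}}$ and $\textup{\texttt{uni}}$ cases, the instances of (iii) invoked — well-formedness and associativity of $\gamma\circ\delta$, and of $\kappa\circ(\gamma\circ\delta)$ — land on strictly smaller derivations, which forces the ordering by derivation height (not by syntactic size of the term), and it is worth checking explicitly that the hypotheses $\Gamma\vdash^{\textup{\texttt{uni}}}_\Gamma\kappa:\Theta$ etc. do not regrow under this bookkeeping. A secondary point requiring care is the $\vDash$-fragment: since coinductive equivalences are closed under the $\textup{\texttt{eta}}/\textup{\texttt{eps}}$ rules, the cleanest way to see that $u[\gamma]$ is again an equivalence is to exhibit its inverse and units as $\textup{\texttt{inv}}(u)[\gamma]$, $\textup{\texttt{eta}}(u)[\gamma]$, $\textup{\texttt{eps}}(u)[\gamma]$ and invoke the definitional equations, transporting the coinductive structure termwise — which is exactly what the cut-enriched $\vDash$-rules are designed to permit. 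Once these organisational matters are settled, no new calculation is needed beyond~\cite{benjamin2021globular}, precisely because every term-former introduced in Sections~\ref{sec:UniversalCone} and~\ref{sec:UniProp} was phrased with the cut already built in.
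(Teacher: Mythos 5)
Your proposal is correct and follows exactly the route the paper intends: the paper states this lemma without any written proof (its other structural lemmas are all dispatched with ``By induction''), and your simultaneous induction on derivation height --- deferring the \texttt{CaTT}/\texttt{GSet} fragment to~\cite{benjamin2021globular} and closing each new term-former case by re-applying the corresponding cut-enriched rule at $\gamma\circ\delta$ together with the definitional equations for iterated substitution --- is precisely the standard argument being alluded to. The one spot where your phrase ``the enriched $\vDash$-rules carry the cut'' needs an extra word is $J_2$, whose conclusion $\Omega \vDash \textup{\texttt{uni}}_{\Gamma,x}[\id_\Gamma] : X[\theta]$ is only ever produced at the identity substitution, so closing that case requires observing $\textup{\texttt{uni}}_{\Gamma,x}[\id_\Gamma][\omega] \equiv \textup{\texttt{uni}}_{\Gamma,x}[\omega]$ and reading $J_2$ as re-instantiable at $\omega$ in the same way $J_1$ is --- a quirk of the rule set as printed rather than a flaw in your argument.
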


Finally we give the definition of the free variables of the new term constructors.
\begin{definition}
    The free variables of the term constructors are defined by
    \begin{align*}
        \FV(\textup{\texttt{ucone}}_{\Gamma,x}[\gamma]) &:= \FV(\gamma) & \FV(\textup{\texttt{inv}}(u)[\gamma]) &:= \FV(u[\gamma]) \\
        \FV(\textup{\texttt{uni}}_{\Gamma,x}[\omega]) &:= \FV(\omega) & \FV(\textup{\texttt{eta}}(u)[\gamma]) &:= \FV(u[\gamma]) \\
        & & \FV(\textup{\texttt{eps}}(u)[\gamma]) &:= \FV(u[\gamma]).
    \end{align*}
\end{definition}


In total, we define a type theory $ \textup{\texttt{CaTT}}_\textup{{\texttt{lim}}}$, which extends \CaTT{} and describes $(\infty,\infty)$-categories with lax limits for finite computads.

\begin{definition}
    The type theory $ \textup{\texttt{CaTT}}_\textup{{\texttt{lim}}}$ given by the rules of \CaTT{}, together with the rules:

    Term constructor for the universal cone:

    \begin{center}
        \begin{prooftree}
            \hypo{(\Theta,x:X,\Theta') \textup{\texttt{ cone }} (\Gamma;c)}
            \hypo{\Gamma \vdash^{\textup{\texttt{uni}}}_\Gamma \kappa : \Theta}
            \hypo{\Delta \vdash \gamma : \Gamma}
            \infer3{\Delta \vdash \textup{\texttt{ucone}}_{\Gamma,x}[\gamma] : X[\kappa \circ \gamma]}
        \end{prooftree}
    \end{center}
    \bigskip

    Term constructors for the universal property:

    \begin{center}
        \adjustbox{scale=0.87}{
            \begin{prooftree}
                \hypo{& K \textup{\texttt{ cone }} (\Gamma;c)}
                \infer[no rule]1{& M \textup{\texttt{ ctrf }}(\Gamma, c)}
                \hypo{&\Lambda, \alpha : T_\alpha, \Delta \vdash_{K, \Theta} x : X}
                \infer[no rule]1{&\Lambda, \alpha : T_\alpha, \Delta \vdash^\whisk_{K;\alpha } w : M}
                \hypo{\Omega \vdash^{\textup{\texttt{uni}}}_\Gamma \theta : (K,\Theta)}
                \hypo{\Phi \vdash \omega : \Omega}
                \infer4{J_1 \qquad\qquad J_2}
            \end{prooftree}
            \quad
            \begin{tabular}{c}
                \small $ \dim(\alpha) = n_M $
            \end{tabular}
        }
    \end{center}
    where
    \begin{align*}
        J_1: && &\Phi \vdash \textup{\texttt{uni}}_{\Gamma,x}[\omega] : X[\theta\circ \omega] && x \in \{\alpha\} \\[2mm]
        J_2: && &\Omega \vDash \textup{\texttt{uni}}_{\Gamma,x}[\id_\Gamma] : X[\theta] && x \in \FV(\Delta).
    \end{align*}

    Term constructors for invertible morphisms:

    \medskip
    \begin{center}
        \begin{prooftree}
            \hypo{\Gamma \vDash u : s\to_A t}
            \infer1{\Gamma \vdash u : s \to_A t}
        \end{prooftree}
    \end{center}
    \bigskip
    \begin{center}
        \begin{prooftree}
            \hypo{\Gamma \vDash u : s \to_A t}
            \hypo{\Delta \vdash \gamma : \Gamma}
            \infer2{\Delta \vDash \textup{\texttt{inv}}(u[\gamma]) : t[\gamma] \to_{A[\gamma]} s[\gamma]}
        \end{prooftree}
    \end{center}
    \bigskip
    \begin{center}
        \begin{prooftree}
            \hypo{\Gamma \vDash u : s \to_A t}
            \hypo{\Delta \vdash \gamma : \Gamma}
            \infer2{\Delta \vDash \textup{\texttt{eta}}(u[\gamma]) : 1_{s[\gamma]} \to u[\gamma] \cdot \textup{\texttt{inv}}(u[\gamma]) \qquad \Delta \vDash \textup{\texttt{eps}}(u[\gamma]) : \textup{\texttt{inv}}(u[\gamma]) \cdot u[\gamma]  \to 1_{t[\gamma]}}
        \end{prooftree}
    \end{center}
    \bigskip
These rules are accompanied by a set of recognition rules for special contexts and context morphisms, listed below. 

Rules for cones as contexts:
    \bigskip
    \begin{center}
        \begin{prooftree}
            \infer0[\textup{\footnotesize{(EK)}}]{ c : \Ob \ \cone \ (\emptyset, c)}
        \end{prooftree}
    \end{center}
    \medskip
    \begin{center}
        \adjustbox{scale=0.95}{
        \begin{prooftree}
            \hypo{\Gamma, c : \Ob, \Pi \ \cone \ (\Gamma;c)}
            \hypo{\Gamma, x : X, c : \Ob, \Pi \vdash s \to_A t}
            \infer2[\textup{\footnotesize{(KE)}}]{\Gamma, x : X, c : \Ob, \Pi, p_x : s \to_A t \ \cone \ ((\Gamma,x : X),c)}
        \end{prooftree}
        \quad \footnotesize
        \begin{tabular}{l}
            $ \tau\overline{\textup{Cond}}(s : A, x : X, c : \Ob, \Pi) $ \\[1mm]
            $\sigma\textup{Cond}(t : A, x : X, \Pi) $
        \end{tabular} \normalsize
        }
    \end{center}
    \bigskip
    where for $ \delta \in \{\sigma,\tau\}$ the notation $ \delta{\textup{Cond}}(s : A, x : X, \Pi) $ stands for $ t : A $ being categorical, $ x \propto t $ and
        $$ 
        \FV(t : A) = \FV(x : X) \cup \bigcup\limits_{ \substack{p \, \in \, \FV(\Pi) \\[1mm] y \, \in \, \FV(\delta(x) \, : \, \partial X) \\[1mm] y \, \propto \, \tau(p) } } \FV(p : T_p) 
        $$
    while $ \tau\overline{\textup{Cond}}(s : A, x : X, c : \Ob, \Pi) $ stands for $ t : A $ being categorical and 
        $$ 
        \FV(t : A) = \FV(c : \Ob) \cup \bigcup\limits_{ \substack{p \, \in \, \FV(\Pi) \\[1mm] y \, \in \, \FV(\delta(x) \, : \, \partial X) \\[1mm] y \, \propto \, \tau(p) } } \FV(p : T_p) .
        $$

    \medskip
    Rules for higher conical transfors as contexts:

        \bigskip
    \begin{center}
        \begin{prooftree}
            \infer0{\emptyset;c:\Ob; \emptyset;\dots; \emptyset \ \textup{\texttt{ctrf}} \ (\emptyset,c)}
        \end{prooftree}
    \end{center}
    \bigskip
    \begin{center}
        \begin{prooftree}
            \hypo{\Gamma; c : \Ob; M_1; \dots; M_{2n+1} \ \textup{\texttt{ctrf}} \ (\Gamma;c)}
            \hypo{\Gamma, x : X, c : \Ob, M_1, p : T \ \textup{\texttt{cone}} \ ((\Gamma,x :X),c)}
            \infer2{\Gamma,x :X; c : \Ob; M_1, p : T; M_2; \dots; M_{2n+1} \ \textup{\texttt{pctrf}} \ ((\Gamma,x:X),c)}
        \end{prooftree}
    \end{center}
    \bigskip
    \begin{center}
        \adjustbox{scale=0.8}{
        \begin{prooftree}
            \hypo{&\Gamma; c : \Ob; M_1; \dots ; M_{2i-1}, x : X; M_{2i}; \dots ; M_{2n+1} \ \textup{\texttt{pctrf}} \ (\Gamma;c)}
            \infer[no rule]1{&\Gamma; c : \Ob; M_1; \dots; M_{2i-1}, x : X; M_{2i}, x' : X; M_{2i+1} \vdash s \to_A t}
            \infer1{&\Gamma; c :\Ob; M_1; \dots ; M_{2i-1}, x : X; M_{2i}, x' : X; M_{2i+1}, p_x : s \to_A t; \dots ; M_{2n+1} \ \textup{\texttt{pctrf}} \ (\Gamma;c)}
        \end{prooftree}
        \ \footnotesize \begin{tabular}{c}
            $ 1 \le i \le n $ \\[1mm]
            $ |M_{2i-1}|=|M_{2i}| $ \\[1mm]
            $ \displaystyle{\tau\textup{Cond}\Big(s : A, x : X, \FV(M_{2i+1}) \Big) }  $ \\[1mm]
            $ \displaystyle{\sigma\textup{Cond}\Big(t : A, x' : X, \FV(M_{2i+1})\Big) }  $
        \end{tabular}
        } \normalsize
    \end{center}
    \bigskip
    \begin{center}
        \begin{prooftree}
            \hypo{\Gamma; c : \Ob; M_1; \dots; M_{2n+1} \ \textup{\texttt{pctrf}} \ (\Gamma;c)}
            \infer1{\Gamma; c : \Ob; M_1; \dots; M_{2n+1} \ \textup{\texttt{ctrf}} \ (\Gamma;c)}
        \end{prooftree}
        \qquad \footnotesize $ |M_1| = |M_{2n+1}| $ \normalsize
    \end{center}
    \bigskip

    Rules for postcomposing with a cone:

    \begin{center}
        \adjustbox{scale=0.8}{
        \begin{prooftree}
            \hypo{K \textup{\texttt{ cone }} (\Gamma, c)}
            \infer1{K, c' : \Ob, D^{n}(c,c',\alpha) \vdash^\textup{\pstar}_{K; \alpha } \langle \id_\Gamma, c' \rangle : (\Gamma, c : \Ob)}
        \end{prooftree}
        }
    \end{center}
    \bigskip
    \begin{center}
        \adjustbox{scale=0.8}{
        \begin{prooftree}
            \hypo{K \textup{\texttt{ cone }} (\Gamma, c)}
            \infer[no rule]1{M \textup{\texttt{ ctrf }} (\Gamma, c)}
            \hypo{W \vdash^\textup{\pstar}_{K;\alpha } w : \Theta}
            \hypo{M \vdash_\Theta x : X}
            \hypo{\Theta \vdash u : X[w]}
            \infer4{W \vdash^\textup{\pstar}_{K; \alpha } \langle w, u \rangle : (\Theta, x : X)}
        \end{prooftree}
        \quad
        \begin{tabular}{c}
            $\dim(\alpha) = n_M $\\[2mm]
            $x \in \FV(M_{j}), \ j \in \{1, \dots, 2n_M - 2, 2n_M \} $ \\[2mm]
            $ \textup{Star}(\Gamma, K, M, x, u : X[w], \alpha : A) $
        \end{tabular}
        }
    \end{center}
    \bigskip
    \begin{center}
        \adjustbox{scale=0.8}{
        \begin{prooftree}
            \hypo{K \textup{\texttt{ cone }} (\Gamma, c)}
            \infer[no rule]1{M \textup{\texttt{ ctrf }} (\Gamma, c)}
            \hypo{W \vdash^\textup{\pstar}_{K;\alpha } w : \Theta}
            \hypo{M \vdash_\Theta x : X}
            \infer3{W, x' : X[w] \vdash^\textup{\pstar}_{K;\alpha } \langle w, x' \rangle : (\Theta, x : X)}
        \end{prooftree}
        \quad
        \begin{tabular}{c}
            $\dim(\alpha) = n_M $\\[2mm]
            $x \in \FV(M_{j}), \ j \in \{ 2n_M - 1, 2n_M +1 \} $
        \end{tabular}
        }
    \end{center}
    \bigskip
    \begin{center}
        \adjustbox{scale=0.8}{
        \begin{prooftree}
            \hypo{W, \alpha : A, x : X, W' \vdash^\textup{\pstar}_{K,\alpha } w : \Theta}
            \infer1{W, x : X, \alpha : A, W' \vdash^\textup{\pstar}_{K;\alpha } w : \Theta}
        \end{prooftree}
        \quad
        \begin{tabular}{c}
            $ \alpha \not \in \FV(X) $
        \end{tabular}
        }
    \end{center}
    \bigskip
    \begin{center}
        \adjustbox{scale=0.8}{
        \begin{prooftree}
            \hypo{W, \alpha : A, x : X, W' \vdash^\textup{\pstar}_{K,\alpha } w : \Theta}
            \infer1{W, \alpha : A, x : X, W' \vdash^\textup{\whisk}_{K;\alpha } w : \Theta}
        \end{prooftree}
        \quad
        \begin{tabular}{c}
            $ \alpha \in \FV(X) $
        \end{tabular}
        }
    \end{center}
    \bigskip
    where, with the notation $ \Gamma \equiv (x_i : A_i)_{1 \le i \le l} $ and $ K \equiv (\Gamma, c : \Ob, (p_i : T_{x_i})_{1\le i \le l}) $, as well as $ M_{j} \equiv (x_{j,i} : T_{j,i})_{1 \le i \le l} $, the shorthand $ \textup{Star}(\Gamma,K,M,x_{j,i}, u : X, \alpha : T_\alpha) $ stands for
    \begin{align*}
        \FV(u : X) &= \FV(\alpha : T_\alpha) \cup \FV(p_i : T_i) \\[3mm]
        \FV(u : X) &= \begin{cases}
            \FV(\sigma^{n-j}(\alpha) : \partial T_\alpha ) \cup \FV(p_i : T_{x_i}), \ & i \text{ odd} \\
            \FV(\tau^{n-j}(\alpha) : \partial T_\alpha )\cup \FV(p_i : T_{x_i}), \ & i \text{ even}
        \end{cases}
    \end{align*}

    Rules for context morphisms for the universal cone and the universal property: 
    \begin{center}
        \begin{prooftree}
            \hypo{\Gamma \vdash}
            \infer1{\Gamma \vdash^{\text{\textup{uni}}}_\Gamma \id_\Gamma : \Gamma}
        \end{prooftree}
    \end{center}
    \bigskip
    \begin{center}
        \begin{prooftree}
            \hypo{(\Theta,x:X,\Theta') \textup{\texttt{ cone }} (\Gamma;c)}
            \hypo{\Gamma \vdash^{\textup{\texttt{uni}}}_\Gamma \kappa : \Theta}
            \infer2{\Gamma \vdash^{\textup{\texttt{uni}}}_\Gamma \langle \kappa, \textup{\texttt{ucone}}_{\Gamma,x}[\id_\Gamma] \rangle : (\Theta, x : X)}
        \end{prooftree}
    \end{center}

    \bigskip

    \begin{center}
        \begin{prooftree}
            \hypo{& K \textup{\texttt{ cone }} (\Gamma;c)}
            \infer[no rule]1{& M \textup{\texttt{ ctrf }}(\Gamma, c)}
            \hypo{&\Lambda, \alpha : T_\alpha, \Delta \vdash_{K, \Theta} x : X}
            \infer[no rule]1{&\Lambda, \alpha : T_\alpha, \Delta \vdash^\whisk_{K;\alpha } w : M}
            \hypo{\Omega \vdash^{\textup{\texttt{uni}}}_\Gamma \theta : (K,\Theta)}
            \infer3{J_3 \qquad\qquad J_4}
        \end{prooftree}
    \end{center}
    
    \bigskip

    where
    \begin{align*}
        \displaystyle
        J_3: && &\Omega, x' : X[\theta] \vdash^{\textup{\texttt{uni}}}_\Gamma \langle \theta, x' \rangle : (\Omega, x : X) && x \in \FV(\Lambda) \\[2mm]
        \displaystyle J_4: && &\Omega \vdash^{\textup{\texttt{uni}}}_\Gamma \langle \theta, \textup{\texttt{uni}}_{\Gamma,x}[\id_\Gamma] \rangle : (\Theta, x : X) && x \in \{\alpha\} \cup \FV(\Delta).
    \end{align*}

\end{definition}


\end{document}